\documentclass[11pt,leqno]{amsart}

\usepackage[a4paper,lmargin=2cm,rmargin=2cm,tmargin=4cm,bmargin=4cm]{geometry}

\usepackage[centertags]{amsmath}
\usepackage{amsfonts}
\usepackage{amssymb}
\usepackage{amsthm}
\usepackage{dsfont}

\numberwithin{equation}{section}

\usepackage[dvipsnames]{xcolor}

\usepackage{hyperref}
	\hypersetup{breaklinks=true,colorlinks=true,
linkcolor=MidnightBlue,citecolor=MidnightBlue,
urlcolor=MidnightBlue}

\usepackage[normalem]{ulem}
\usepackage{bbm}

\usepackage[english]{babel}
\usepackage[utf8]{inputenc}
\usepackage{enumerate}
\usepackage{mathrsfs}

\parskip=1ex

\theoremstyle{plain}
\newtheorem{theorem}{Theorem}[section]
\newtheorem{corollary}[theorem]{Corollary}
\newtheorem{proposition}[theorem]{Proposition}
\newtheorem{lemma}[theorem]{Lemma}

\theoremstyle{definition}

\newtheorem{fact}[theorem]{Fact}
\newtheorem{example}[theorem]{Example}
\newtheorem{remark}[theorem]{Remark}

\newcommand{\N}{\ensuremath{\mathbb{N}}}
\newcommand{\C}{\ensuremath{\mathbb{C}}}
\newcommand{\K}{\ensuremath{\mathbb{K}}}
\newcommand{\R}{\ensuremath{\mathbb{R}}}
\newcommand{\T}{\ensuremath{\mathbb{T}}}
\newcommand{\F}{\mathrm{F}}

\newcommand{\eps}{\ensuremath{\varepsilon}}

\newcommand{\ext}{\operatorname{ext}}
\newcommand{\conv}{\operatorname{conv}}
\newcommand{\aconv}{\operatorname{aconv}}

\newcommand{\re}{\operatorname{Re}}
\newcommand{\Id}{\operatorname{Id}}

\newcommand{\Lip}{\operatorname{Lip}}

\newcommand{\spn}{\operatorname{span}}

\newcommand{\smooth}{\operatorname{Smooth}}
\newcommand{\StrExp}{\operatorname{StrExp}}
\newcommand{\Spear}{\operatorname{Spear}}

\renewcommand{\leq}{\leqslant}
\renewcommand{\geq}{\geqslant}

\begin{document}
	\title[Numerical range and BJ-orthogonality]{A numerical range approach to Birkhoff-James orthogonality with applications}
	\date{January 30th, 2024}
	
	\author[M.~Mart\'{\i}n]{Miguel Mart\'{\i}n}
	\address[Mart\'{\i}n]{Universidad de Granada \\ Facultad de Ciencias \\ Departamento de An\'{a}lisis Matem\'{a}tico \\ E-18071 Granada \\ Spain \newline
		\href{http://orcid.org/0000-0003-4502-798X}{ORCID: \texttt{0000-0003-4502-798X} }}
	\email{mmartins@ugr.es}
	\urladdr{\url{https://www.ugr.es/local/mmartins}}
	
	\author[J.~Mer\'{\i}]{Javier Mer\'{\i}}
	\address[Mer\'{\i}]{Universidad de Granada \\ Facultad de Ciencias \\
		Departamento de An\'{a}lisis Matem\'{a}tico \\ E-18071 Granada \\
		Spain \newline
		\href{http://orcid.org/0000-0002-0625-5552}{ORCID: \texttt{0000-0002-0625-5552} }}
	\email{jmeri@ugr.es}
	
	\author[A.~Quero]{Alicia Quero}
	\address[Quero]{Universidad de Granada \\ Facultad de Ciencias \\
		Departamento de An\'{a}lisis Matem\'{a}tico \\ E-18071 Granada \\
		Spain \newline
		\emph{Current address:} Czech Technical University in Prague \\ Faculty of Information Technology \\ Department of Applied Mathematics \\ Th\'{a}kurova 9 \\ 160 00 \\ Prague 6 \\ Czech Republic \newline
		\href{http://orcid.org/0000-0003-4534-8097}{ORCID: \texttt{0000-0003-4534-8097} }}
	\email{aliciaquero@ugr.es}
	
	\author[S.~Roy]{Saikat Roy}
	\address[Roy]{Department of Mathematics\\ Indian Institute of Technology Bombay\\ Mumbai, Maharashtra - 400076\\ India \newline
    \href{https://orcid.org/my-orcid?orcid=0000-0002-7926-1427}{ORCID: \texttt{0000-0002-7926-1427} }}
	\email{saikatroy.cu@gmail.com}
	
	\author[D.~Sain]{Debmalya Sain}
	\address[Sain]{Department of Mathematics \\ Indian Institute of Information Technology Raichur \\ Karnataka-584135 \\ India \newline
    \href{https://orcid.org/0000-0002-9721-1597}{ORCID: \texttt{0000-0002-9721-1597} }}
	\email{saindebmalya1@gmail.com}
	
	\thanks{The first, second, and third named authors have been supported by PID2021-122126NB-C31 funded by MCIN/AEI/ 10.13039/501100011033 and “ERDF A way of making Europe”, by Junta de Andaluc\'ia I+D+i grants P20\_00255 and FQM-185, and by ``Maria de Maeztu'' Excellence Unit IMAG, reference CEX2020-001105-M funded by MCIN/AEI/10.13039/501100011033. The third author is also supported by the Ph.D.\ scholarship FPU18/03057 (MECD). The fourth named author has been supported by the Institute Postdoctoral Fellowship at Indian Institute of Technology Bombay under the supervision of Prof. Sourav Pal. The research of the fifth named author has been sponsored by a Maria Zambrano Fellowship of the University of Granada and “ERDF A way of making Europe”, and grant PID2021-122126NB-C31 funded by MCIN/AEI and by “ERDF A way of making Europe”, under the guidance of the first named author.}
	
	\thispagestyle{plain}
	
	\begin{abstract}
The main aim of this paper is to provide characterizations of Birkhoff-James orthogonality (BJ-orthogonality in short) in a number of families of Banach spaces in terms of the elements of significant subsets of the unit ball of their dual spaces, which makes the characterizations more applicable. The tool to do so is a fine study of the abstract numerical range and its relation with the BJ-orthogonality. Among other results, we provide a characterization of BJ-orthogonality for spaces of vector-valued bounded functions in terms of the domain set and the dual of the target space, which is applied to get results for spaces of vector-valued continuous functions, uniform algebras, Lipschitz maps, injective tensor products, bounded linear operators with respect to the operator norm and to the numerical radius, multilinear maps, and polynomials. Next, we study possible extensions of the well-known Bhatia-\v{S}emrl theorem on BJ-orthogonality of matrices, showing results in spaces of vector-valued continuous functions, compact linear operators on reflexive spaces, and finite Blaschke products. 
Finally, we find applications of our results to the study of spear vectors and spear operators. We show that no smooth point of a Banach space can be BJ-orthogonal to a spear vector of $Z$. As a consequence, if $X$ is a Banach space containing strongly exposed points and $Y$ is a smooth Banach space with dimension at least two, then there are no spear operators from $X$ to $Y$. Particularizing this result to the identity operator, we show that a smooth Banach space containing strongly exposed points has numerical index strictly smaller than one. These latter results partially solve some open problems.
	\end{abstract}
	
	\subjclass{Primary 46B04,  Secondary 46B20, 46B25, 46B28, 46E15, 46E40, 47A12, 47A30}
	\keywords{Birkhoff-James orthogonality; numerical range, Bhatia-\v{S}emrl results; smoothness, bounded linear operators; spear vectors and operators; numerical index}
	
	\dedicatory{To Professor William B.~Johnson on the occasion of his 80th birthday}
	
	\maketitle
	
	\section{Introduction}
	Let $Z$ be a Banach space over the field $\K$ (which will always be considered as $\R$ or $\C$). Given $x,y\in Z$, we say that $x$ is \emph{Birkhoff-James orthogonal} to $y$ (\emph{BJ-orthogonal} in short), denoted by $x \perp_B y$, if
	$$\|x + \lambda y\|\geq \|x\| \quad \forall \lambda\in\K.$$
	This definition, proposed by Birkhoff \cite{Birkhoff} in the setting of metric linear spaces, has a natural geometric interpretation: $x \perp_B y$ if and only if the (real or complex) line $\{x+\lambda y\colon \lambda\in\K\}$ is disjoint from the open ball of radius $\|x\|$ centered at the origin. Observe that BJ-orthogonality is homogeneous, i.e., $x\perp_B y$ implies that $\alpha x \perp_B \beta y$ for every $\alpha,\beta\in\K$. Also, smoothness of the norm of $Z$ at $x$ is equivalent to the right-additivity of $\perp_B$ at $x$: $x$ is smooth in $Z$ if and only if for any $y,z\in Z$, $ x \perp_B y$, $x \perp_B z $ together imply that $x \perp_B (y+z)$. In case the norm is induced by an inner product $ \langle~,~ \rangle $, it is elementary to notice that BJ-orthogonality is equivalent to the usual orthogonality: $x \perp y$ if and only if $\langle x,y\rangle = 0 $ if and only if $ x \perp_B y. $ This shows that BJ-orthogonality generalizes the concept of usual orthogonality to the framework of norms. It is worth mentioning that BJ-orthogonality is not a symmetric relation in general, i.e., $x\perp_B y$ may not necessarily imply $y\perp_B x$. Although there exist several non-equivalent notions of orthogonality in Banach spaces, it is commonly accepted that BJ-orthogonality is arguably the most useful one amongst them by virtue of its rich connections with many important concepts in the geometric theory of Banach spaces, including smoothness, operator norm attainment, characterizations of Euclidean and Hilbert spaces among Banach spaces, and best approximations. We refer the interested readers to \cite{Sain-JMAA-2018, Sain-PAMS, SainPaul-2013, SainPaulMalRay, Singer, Stampfli}, and the references therein, for more information in this regard.

	A general way to study BJ-orthogonality  in any Banach space $Z$ was given by R.~C.~James in terms of the dual space $Z^*$ of $Z$.
	
	\begin{fact}[{\cite[Corollary~2.2]{James}}]\label{fact:James}
		Let $Z$ be a Banach space and let $x,y \in Z$. Then,
		$$
		x\perp_By \ \Longleftrightarrow\ \text{there exists $\phi\in Z^*$ with $\|\phi\|=1$ such that $\phi(x)=\|x\|$ and $\phi(y)=0$.}
		$$
	\end{fact}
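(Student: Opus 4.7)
The plan is to prove the two implications separately; the sufficiency direction is a one-line estimate, while the necessity reduces to an application of the Hahn--Banach theorem.

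For the direction ($\Leftarrow$), if $\phi\in Z^*$ satisfies $\|\phi\|=1$, $\phi(x)=\|x\|$, and $\phi(y)=0$, I would simply observe that for every $\lambda\in\K$
\[
\|x+\lambda y\|\geq |\phi(x+\lambda y)|=|\phi(x)+\lambda\phi(y)|=|\phi(x)|=\|x\|,
\]
which is exactly $x\perp_B y$.

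For the direction ($\Rightarrow$), the plan is to reinterpret the hypothesis as a distance condition and then apply Hahn--Banach. One may assume $x\neq 0$, since otherwise the conclusion is essentially vacuous. The BJ-orthogonality $x\perp_B y$ says precisely that
\[
\operatorname{dist}(x,\operatorname{span}\{y\})=\|x\|.
\]
I would then produce $\phi$ by either of two equivalent routes. First option: work on the finite-dimensional subspace $M=\operatorname{span}\{x,y\}$ and define $\psi\colon M\to\K$ by $\psi(\alpha x+\beta y)=\alpha\|x\|$; this is well defined provided $x,y$ are linearly independent, and the distance identity yields $\|\psi\|=1$, so Hahn--Banach supplies a norm-preserving extension $\phi\in Z^*$ with $\phi(x)=\|x\|$ and $\phi(y)=0$. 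Second option: pass to the quotient $Z/\operatorname{span}\{y\}$, note that the distance identity is exactly $\|[x]\|_{Z/\operatorname{span}\{y\}}=\|x\|$, pick by Hahn--Banach a norm-one norming functional of the class $[x]$, and pull it back via the quotient map.

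The main obstacle is just the bookkeeping for degenerate situations where $x$ and $y$ are linearly dependent (which is where $\psi$ may fail to be well defined): in that case, $x\perp_B y$ together with $x\neq 0$ forces $y=0$, and any Hahn--Banach norming functional of $x$ then suffices. With this caveat dispatched, the required identities are immediate from the construction, and the normalization $\|\phi\|=1$ follows from $\|\phi\|\leq 1$ combined with $|\phi(x)|=\|x\|\neq 0$.
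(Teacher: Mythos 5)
Your argument is correct. Note, however, that the paper does not prove this statement at all: it is recorded as a \emph{Fact} with a citation to James's paper, so there is no internal proof to compare against. What you have written is the standard classical argument for James's criterion: the backward implication by the one-line estimate $\|x+\lambda y\|\geq|\phi(x+\lambda y)|=\|x\|$, and the forward implication by observing that $x\perp_B y$ is precisely $\operatorname{dist}(x,\operatorname{span}\{y\})=\|x\|$ and then invoking Hahn--Banach, either by norm-preserving extension of the functional $\alpha x+\beta y\mapsto\alpha\|x\|$ on $\operatorname{span}\{x,y\}$ or, equivalently, by norming the class $[x]$ in the quotient $Z/\operatorname{span}\{y\}$ and composing with the quotient map; your two routes are the same argument in two guises, and your treatment of the degenerate case ($x\neq0$ with $x,y$ dependent forces $y=0$, so any norming functional of $x$ works) is the right bookkeeping. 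The only caveat worth recording is the utterly degenerate situation $x=0$ with $\dim Z=1$ and $y\neq0$, where the left-hand side holds but no norm-one functional annihilating $y$ exists; this is a defect of the statement read literally rather than of your proof, and it is harmless for every use made of the Fact in the paper (where it is applied with $u\in S_Z$).
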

	
This characterization of BJ-orthogonality immediately relates it with the norm attainment problem for functionals in the dual space. As a matter of fact, one of the useful ways to reap the benefits out of the concept of BJ-orthogonality for the purpose of understanding the geometric and analytic structures of a Banach space, is to apply James' characterization of BJ-orthogonality in the corresponding dual space. As we will see in this article, it is possible to obtain further refinements of the James characterization in many important cases including the Banach space of bounded linear operators between Banach spaces.

	The above result by James also relates BJ-orthogonality with the concept of (abstract) numerical range. Let us introduce the required notations and definitions. Given a Banach space $Z$, we write $B_Z$ and $S_Z$ to denote, respectively, the closed unit ball and the unit sphere of $Z$, $\re(\cdot)$ will denote the real part (which is nothing but the identity if we are dealing with real numbers), and we write $\T$ for the set of modulus-one scalars. If $u\in Z$ is a norm-one element, the \emph{(abstract) numerical range} of $z\in Z$ with respect to $(Z,u)$ is the non-empty compact convex subset of $\K$ given by
	$$
	V(Z,u,z):=\{\phi(z) \colon \phi\in \F(B_{Z^*},u)\},
	$$
	where $\F(B_{Z^*},u):=\{\phi\in S_{Z^*}\colon \phi(u)=1\}$ is the \emph{face} of $B_{Z^*}$ generated by $u$, also known as the \emph{set of states of $Z$ relative to $u$}. The concept of abstract numerical range takes its roots in a 1955 paper by Bohnenblust and Karlin \cite{Bohn-Karlin} and it was introduced in the 1985 paper \cite{MarMenaPayaRod1985}. We refer the reader to the classical books \cite{BonsallDuncan1,B-D2} by Bonsall and Duncan, to Sections 2.1 and 2.9 of the book \cite{Cabrera-Rodriguez}, and to Section 2 of \cite{KMMPQ} for more information and background.
	
	Observe that, with the definition of numerical range in hands, Fact~\ref{fact:James} can be easily written in the following way.
	
	\begin{proposition}\label{prop:BJ-using-numranges}
		Let $Z$ be a Banach space, let $u\in S_Z$, and let $z\in Z$. Then,
		$$
		u\perp_Bz\ \Longleftrightarrow\ 0\in V(Z,u,z).
		$$
	\end{proposition}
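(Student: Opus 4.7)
The plan is to prove this by simply unwrapping the definitions on both sides and noting that they coincide with the two conditions appearing in Fact~\ref{fact:James}. This is essentially a translation exercise rather than a genuine theorem, so no hard step is anticipated.

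First I would rewrite the right-hand side. By the definition of the numerical range,
\[
0\in V(Z,u,z) \iff \exists\,\phi\in \F(B_{Z^*},u)\ \text{such that}\ \phi(z)=0,
\]
and by the definition of the face $\F(B_{Z^*},u)$, this is equivalent to the existence of some $\phi\in S_{Z^*}$ satisfying $\phi(u)=1$ and $\phi(z)=0$. Since $u\in S_Z$, the condition $\phi(u)=1$ is the same as $\phi(u)=\|u\|$.

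Second, I would invoke Fact~\ref{fact:James} with $x=u$ and $y=z$: the relation $u\perp_B z$ holds if and only if there exists $\phi\in Z^*$ with $\|\phi\|=1$, $\phi(u)=\|u\|$ and $\phi(z)=0$. This matches exactly the condition obtained in the previous step, which finishes the proof.

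The only potential subtlety is the normalization $\|u\|=1$, which guarantees that the face $\F(B_{Z^*},u)$ is nonempty (by Hahn--Banach) and that $\phi(u)=\|u\|$ coincides with $\phi(u)=1$; both are used implicitly in the equivalence. No technical obstacle is expected.
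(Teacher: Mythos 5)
Your proof is correct and follows exactly the route the paper intends: the paper presents this proposition as an immediate rewriting of Fact~\ref{fact:James} using the definitions of $V(Z,u,z)$ and $\F(B_{Z^*},u)$, which is precisely what you do (including the harmless normalization point $\phi(u)=1=\|u\|$). Nothing to add.
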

	
	Let us also comment that it is also possible to write the numerical range in terms of the BJ-orthogonality, see Proposition~\ref{prop:numrange-using-BJ}. It is then clear that the study of BJ-orthogonality and the study of abstract numerical ranges are somehow equivalent.
	
	The main disadvantage of Proposition~\ref{prop:BJ-using-numranges} (and of Fact~\ref{fact:James}) is that we have to deal with the whole dual of the Banach space $Z$, and this is difficult in many situations. For instance, when $Z$ is a space of bounded linear operators, which is the most interesting case for us, the dual space is a wild object that is not easy to manage. For an easier writing of our discussion, let us introduce the following notation: given Banach spaces $X$, $Y$, we write $\mathcal{L}(X,Y)$ to denote the space of all bounded linear operators from $X$ to $Y$ and $\mathcal{K}(X,Y)$ for its subspace consisting of compact operators. When $X=Y$, we just write $\mathcal{L}(X)$ and $\mathcal{K}(X)$. In the case when $Z$ is the space of $n\times n$ matrices (identified with $\mathcal{L}(H)$ where $H$ is an $n$-dimensional Hilbert space), a celebrated result by Bhatia and \v{S}emrl \cite[Theorem~1.1]{BhatiaSmerl} says that two matrices $A$, $B$ satisfy that $A\perp_B B$ if and only if there is a norm-one vector $x$ such that $\|Ax\|=\|A\|$ and $\langle Ax,Bx\rangle =0$ (that is, there is a norm-one vector $x$ at which $A$ attains its norm and such that $Ax\perp Bx$). Observe that it is equivalent to say that, in this case, when $A\perp_B B$, the functional $\phi$ on the space of $n\times n$ matrices given by Fact~\ref{fact:James} can be taken of the form $\phi(C)=\langle Cx,y\rangle$ for some norm-one vectors $x$ and $y$ (see the proof of Corollary~\ref{corollary:BhatiaSmerltheorem}). Clearly, this gives much more information than the one provided by Fact~\ref{fact:James} and avoids to deal with the dual of the space of matrices. This result does not extend to general operators on infinite-dimensional Hilbert spaces (as they do not need to attain the norm), but there is a similar result: given two bounded linear operators $A$ and $B$ on a Hilbert space $H$, $A\perp_B B$ if and only if there is a sequence $\{x_n\}$ in $S_H$ satisfying that $\lim \|Ax_n\|=\|A\|$ and $\lim \langle Ax_n,Bx_n\rangle =0$ \cite[Lemma~2.2]{Magajna}, \cite[Remark~3.1]{BhatiaSmerl}. The significance of the result obtained by Bhatia and \v{S}emrl lies in the fact that it allows us to examine the orthogonality of bounded linear operators on a Hilbert space in terms of the usual orthogonality of certain special elements in the ground space. We would like to emphasize here that such a characterization of BJ-orthogonality is certainly more handy than James' characterization, since we do not need to deal with the dual of the operator space. Moreover, as already mentioned in \cite{BhatiaSmerl}, it is natural to speculate about the validity of the above results in the case of bounded linear operators on a Banach space. In general, they do not extend to operators between general Banach spaces, even in the finite-dimensional case, as it was shown by Li and Schneider \cite[Example~4.3]{LiSchneider}. However, a related weaker result was proved in the same paper \cite[Proposition~4.2]{LiSchneider}: if $X$ and $Y$ are finite-dimensional Banach spaces and $T,A\in \mathcal{L}(X,Y)$, then
	$$
	T\perp_B A \ \Longleftrightarrow \
	0\in \conv\left(\bigl\{y^*(Ax)\colon x\in \ext(B_X), \ y^*\in \ext(B_{Y^*}), \ y^*(Tx)=\|T\|\bigr\}\right),
	$$
	where $\ext(C)$ denotes the set of extreme points of a convex set $C$ and $\conv(\cdot)$ is the convex hull. Observe that this result is similar to Bhatia-\v{S}emrl's one up to taking the convex hull in $\K$.  How did Li and Schneider get this result? Just by characterizing the extreme points of the dual unit ball of $\mathcal{L}(X,Y)$ when $X$ and $Y$ are finite-dimensional and then using a classical result by Singer about best approximation. Let $Z$ be a Banach space, let $M$ be a subspace of $Z$, and let $x\in Z$. An element $m_0\in M$ is said to be a \emph{best approximation} of $x$ at $M$ if
	$$
	\|x-m_0\|\leq\|x-m\| \quad \forall m\in M.
	$$
	Observe that $m_0$ is a best approximation to $x$ in $M$ if and only if $x-m_0$ is BJ-orthogonal to $M$, i.e., $x-m_0\perp_B m$ for every $m\in M$. Equivalently, given $x,y\in Z$, $x\perp_B y$ if and only if $0$ is a best approximation to $x$ in $\spn\{y\}$. We refer the interested reader to the classical book \cite{Singer} by I.~Singer for background.  Using the relation between best approximation and BJ-orthogonality, the classical result of I.~Singer that Li and Schneider used reads as follows.
	
	\begin{fact}[{\cite[Theorem~II.1.1]{Singer}}]\label{fact:Singer}
		Let $Z$ be a Banach space and let $u,z \in Z$. Then,
		$$
		u \perp_B z \ \Longleftrightarrow\ 0\in \conv\left(\bigl\{\phi(z)\colon \phi \in\ext(B_{Z^*}),\ \phi(u)=\|u\| \bigr\} \right).
		$$
	\end{fact}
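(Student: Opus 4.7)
My plan is to derive Singer's characterization by feeding James' criterion (Fact~\ref{fact:James}) into the Krein-Milman theorem, applied to the weak-$*$ compact face of $B_{Z^*}$ exposed by $u$. The case $u=0$ is trivial on both sides, so assume $u\neq 0$ and set
$$
F:=\bigl\{\phi\in B_{Z^*}\colon \phi(u)=\|u\|\bigr\}.
$$
By Hahn-Banach $F$ is nonempty, it is weak-$*$ closed (hence weak-$*$ compact) as the intersection of $B_{Z^*}$ with the weak-$*$ closed hyperplane $\{\phi(u)=\|u\|\}$, and it is an extreme subset of $B_{Z^*}$ because $\phi\mapsto \re\phi(u)$ attains its maximum $\|u\|$ over $B_{Z^*}$ precisely on $F$. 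So $F$ is a weak-$*$ compact face.

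By Fact~\ref{fact:James}, $u\perp_B z$ is equivalent to $0\in N(z):=\{\phi(z)\colon \phi\in F\}$, which is a compact convex subset of $\K$ as the image of the weak-$*$ compact convex set $F$ under the weak-$*$ continuous linear map $\phi\mapsto\phi(z)$. Next I would apply Krein-Milman to the weak-$*$ compact convex set $F$ to write $F=\overline{\conv}^{w^*}(\ext(F))$, and use the standard fact that $\ext(F)=F\cap \ext(B_{Z^*})$, which holds because $F$ is a face (if $\phi\in\ext(F)$ were a convex combination in $B_{Z^*}$, the face property forces the summands to lie in $F$, and then extremality inside $F$ forces equality).

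Applying the weak-$*$ continuous linear functional $\phi\mapsto \phi(z)$ and using that continuous linear images of closed convex hulls are closed convex hulls of the images, one obtains
$$
N(z)=\overline{\conv}\bigl\{\phi(z)\colon \phi\in \ext(B_{Z^*}),\ \phi(u)=\|u\|\bigr\}.
$$
Since $\K$ is finite-dimensional and the above set of scalars is bounded (by $\|z\|$), closed convex hull coincides with ordinary convex hull. Therefore $0\in N(z)$ if and only if $0$ belongs to the right-hand side of Singer's formula, which is the stated equivalence.

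There is no serious obstacle: the argument is a clean combination of James' theorem, Krein-Milman, and the interplay between faces and extreme points. The only subtlety worth checking is the identification $\ext(F)=F\cap \ext(B_{Z^*})$, together with the passage from weak-$*$ closed convex hull in $F$ to (ordinary) convex hull in $\K$, both of which are standard once the face structure of $F$ is in place.
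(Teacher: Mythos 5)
Your overall strategy (James' criterion fed into Krein--Milman on the weak-$*$ compact face $F=\{\phi\in B_{Z^*}\colon\phi(u)=\|u\|\}$, together with $\ext(F)=F\cap\ext(B_{Z^*})$) is sound and close in spirit to the paper's route, which deduces the Fact from Proposition~\ref{prop:BJ-using-numranges} combined with Proposition~\ref{prop:num-range-ext} (proved via Bauer's Maximum Principle on $\F(B_{Z^*},u)$). However, your last step contains a genuine gap. Writing $A'=\{\phi(z)\colon\phi\in\ext(B_{Z^*}),\ \phi(u)=\|u\|\}$, you correctly obtain $N(z)=\overline{\conv}(A')$, but then pass to $\conv(A')$ by claiming that for a bounded subset of $\K$ the closed convex hull coincides with the ordinary convex hull. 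That is false: boundedness does not make the convex hull closed. For instance, $A=\{1/n\colon n\in\N\}\subset\R$ has $\conv(A)=(0,1]$ while $\overline{\conv}(A)=[0,1]$, so the answer to ``is $0$ in the hull?'' changes with the closure; similarly the convex hull of a punctured circle in $\C$ omits the deleted boundary point. Since $\ext(B_{Z^*})$ is in general not weak-$*$ closed, $A'$ need not be compact, and your argument as written only proves the statement with $\overline{\conv}$ in place of $\conv$. The passage to the ordinary convex hull is exactly the non-trivial content of Singer's theorem, and it is needed in the substantive direction $u\perp_B z\Rightarrow 0\in\conv(A')$ (the converse only uses $\conv(A')\subseteq N(z)$ and is unaffected).

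The gap is fixable with the structure you already set up. Since $N(z)$ is a compact convex subset of $\K$ (real dimension at most two), Minkowski's theorem gives $N(z)=\conv(\ext N(z))$ with no closure. Moreover, every $\lambda\in\ext N(z)$ belongs to $A'$: the preimage $\{\phi\in F\colon\phi(z)=\lambda\}$ is a non-empty weak-$*$ closed face of $F$, hence by Krein--Milman it contains an extreme point of $F$, and such a point is an extreme point of $B_{Z^*}$ because $F$ is a face of $B_{Z^*}$. Therefore $N(z)=\conv(\ext N(z))\subseteq\conv(A')\subseteq N(z)$, which is the equality you need with the ordinary convex hull. This extreme-point bookkeeping is, in essence, what Bauer's Maximum Principle delivers in the paper's proof of Proposition~\ref{prop:num-range-ext}: the relevant maxima over the face are attained at points of $\ext(B_{Z^*})$ lying in the face. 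With this replacement of your final step, your proof is complete.
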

	
	Observe that this result just says that the functional $\phi$ in Fact~\ref{fact:James} can be taken in the convex hull of the set of extreme points of $B_{Z^*}$. We will provide in Proposition~\ref{prop:num-range-ext} a version of Proposition~\ref{prop:BJ-using-numranges} using only extreme points with an independent proof. Of course, Fact~\ref{fact:Singer} is very interesting in the cases when the extreme points of the dual ball are known and easy to manage: for $Z=C(K)$ or for $Z$ being an isometric predual of an $L_1(\mu)$ space, or even when $Z=\mathcal{L}(X,Y)$ and $X$ and $Y$ are finite-dimensional (as it was done by Li and Schneider, see \cite[Proposition~4.2]{LiSchneider}). Actually, for arbitrary spaces $X$ and $Y$, the  extreme points of the dual ball of $Z=\mathcal{K}(X,Y)$ have been described as $\ext(B_{X^{**}})\otimes \ext(B_{Y^*})$ (see \cite[Theorem 1.3]{Ruess-Stegall-MathAnn1982} for the real case and \cite[Theorem 1]{LimaOlsen} for the complex case). In the particular case when $X$ is reflexive, this provides the following result which covers Li and Schneider's one: let $X$ be a reflexive space, let $Y$ be a Banach space, and let $T,A\in \mathcal{K}(X,Y)$; then
	$$
	T\perp_B A
	\ \Longleftrightarrow \
	0\in \conv\left(\bigl\{y^*(Ax)\colon x\in \ext(B_X), \ y^*\in \ext(B_{Y^*}), \ y^*(Tx)=\|T\|\bigr\}\right)
	$$
	(see Corollary~\ref{B-J-orth-operator-norm-compact}). When we deal with non-compact operators, there is no description of the extreme points of the unit ball of $\mathcal{L}(X,Y)^*$ available, hence Fact~\ref{fact:Singer} is not applicable in this case. However, a somehow similar result was proved in \cite[Theorem~2.2]{Roypre2022}: let $X$, $Y$ be Banach spaces and let $T,A\in\mathcal{L}(X,Y)$; then,
	\begin{equation*}
		T\perp_B A \Longleftrightarrow 0\in \conv\left(\bigl\{\lim y_n^*(Ax_n)\colon (x_n,y_n^*)\in S_X \times S_{Y^*} \ \forall n\in\N, \ \lim y_n^*(Tx_n)=\|T\|\bigr\}\right).
	\end{equation*}
	This is, as far as we know, the most general result concerning a characterization of BJ-orthogonality of operators in terms of the domain and range spaces and their duals.
	
	Our main aim in this paper is to provide a very general result characterizing BJ-orthogonality in a Banach space $Z$ in terms of the actions of elements on an arbitrary one-norming subset. Recall that a subset $\Lambda\subset S_{Z^*}$ is said to be \emph{one-norming} for $Z$ if $\|z\|=\sup\{|\phi(z)|\colon \phi\in \Lambda\}$ for all $z\in Z$ (equivalently, if $B_{Z^*}$ equals the absolutely weak-star closed convex hull of $\Lambda$). One of the assertions of this general result (see Corollary~\ref{cor:B-J-orth-Z-Lambda}) is the following: let $Z$ be a Banach space, $\Lambda\subset S_{Z^*}$ be one-norming for $Z$; then for $u\in S_Z$ and $z\in Z$,
	\begin{equation*}
		u\perp_B z \ \Longleftrightarrow \ 0\in \conv\left(\bigl\{\lim \psi_n(z)\overline{\psi_n(u)}\colon \psi_n\in \Lambda, \lim |\psi_n(u)|=1\bigr\}\right).
	\end{equation*}
	The way to get the result is to combine Proposition~\ref{prop:BJ-using-numranges} with a very general result on numerical ranges, Theorem~\ref{thm:num-range-Lambda}, which extends previous characterizations from \cite{KMMPQ}. This result also allows to characterize smooth points, see Corollary~\ref{cor:characterization-smooth-lambda}. There are also nicer versions of these results in the case when instead of a one-norming subset $\Lambda$, we have a subset $C$ of $S_{Z^*}$ such that its weak-star closed convex hull is the whole $B_{Z^*}$, see Theorem~\ref{theorem:num-range-C} and Corollaries \ref{cor:B-J-orth-Z-C} and \ref{cor:characterization-smooth}. All of this is the content of  Section~\ref{sect:B-J-orth-smooth-num-range} of this manuscript.
	
	Section \ref{section:someparticularcases} contains a number of particular cases in which the results of Section~\ref{sect:B-J-orth-smooth-num-range} apply. It is divided into several subsections, and covers results in a number of spaces. Even though some of the results of this section were previously known, the previous approaches were different and use ad hoc techniques for each of the particular cases, while our present approach generalizes all these techniques. On the other hand, the general result for $\ell_\infty(\Gamma,Y)$ we give in Theorem~\ref{theorem:ellinftygammaY} seems to be new, as are its applications for spaces of vector-valued continuous functions (Corollaries \ref{corollary (to be used)} and \ref{corollary:C(K,Y)extreme}), uniform algebras (Corollary~\ref{corollary:uniformalgebras}), Lipschitz maps (Proposition~\ref{prop:Lipschitz}), and injective tensor products (Proposition~\ref{prop:B-J-orth-injective-tensor}). For bounded linear operators (Subsection~\ref{subsection:operators}), most of the results have already been known, but there are some improvements of previous results in Proposition~\ref{prop:compact-operators-general-extreme} and Corollary~\ref{B-J-orth-operator-norm-compact}. Besides, we include a result on smoothness of bounded linear operators which will be used in Section~\ref{section:An-application}. Subsection~\ref{subsection:multilinear-polynomials} deals with multilinear maps and polynomials and the results seem to be new. Finally, Subsection~\ref{subsection:numericalradiusnorm} contains results on BJ-orthogonality with respect to the numerical radius of operators which were previously known.
	
	Next, in Section~\ref{section:BS-kindofresults} we provide several results related to the Bhatia-\v{S}emrl theorem (in the sense of removing the convex hull and the limits of the characterization of BJ-orthogonality). The main result (Theorem~\ref{Compact continuous}) is about vector-valued continuous functions on a compact Hausdorff space and seems to be completely new. As consequences, we obtain Bhatia-\v{S}emrl's kind of results for compact operators on reflexive Banach spaces, Proposition~\ref{prop:BS-realcase} for the real case, Theorem~\ref{theorem:newBS-result-complex} in the complex case, and the latter is new for infinite-dimensional spaces. We also obtain a nice characterization of BJ-orthogonality for finite Blaschke products (Example~\ref{example:Blaschke}).
	
Finally, Section~\ref{section:An-application} contains applications of the results in the paper to the study of spear vectors, spear operators, and Banach spaces with numerical index one. The notions of spear vectors and spear operators will be defined in Section~\ref{section:An-application}. The results in this section are actually consequences of Theorem~\ref{theorem:smooth-not-orthogonal} which says that if $u$ is a vertex of a Banach space $Z$ and $z\in Z$ is smooth in $(Z,v_u)$, then $z$ cannot be BJ-orthogonal to $u$ in $(Z,v_u)$. Thus, no smooth point of a Banach space $Z$ can be BJ-orthogonal to a spear vector of $Z$ (Corollary~\ref{cor:smooth-orth-spear}). The particularization of the results to the case $Z=\mathcal{L}(X,Y)$ leads to obstructive results for the existence of spear operators. In particular, we show that if $X$ is a Banach space containing strongly exposed points and $Y$ is a smooth Banach space with dimension at least two, then there are no spear operators in $\mathcal{L}(X,Y)$ (Corollary~\ref{corollary:strexpnonempty-smooth-smooth-ortogona}) and this result is proved using a sufficient condition for an operator to be smooth  (Proposition~\ref{prop:suff-condition-smooth-operator}). This result somehow extends \cite[Proposition~6.5.a]{KMMP-SpearsBook} and provides a partial answer to \cite[Problem~9.12]{KMMP-SpearsBook}. Particularizing this to the identity operator, we get an obstructive condition for a Banach space to have numerical index one: the existence of a smooth point which is BJ-orthogonal to a strongly exposed point (Corollary~\ref{cor:X-exp-point-n(X)<1}). In particular, smooth Banach spaces with dimension at least two containing strongly exposed points do not have numerical index one (Corollary~\ref{corollary:smooth-stronglyexposednotempty-nonX=1}). This latter result is a partial answer to the question of whether a smooth Banach space of dimension at least two may have numerical index one \cite{ConvexSmooth}. Let us comment that the mix of ideas from numerical ranges and from BJ-orthogonality is the key to obtaining these interesting applications which partially solve some open questions. Moreover, the abstract numerical range approach to BJ-orthogonality considered in this article generalizes all of the previously mentioned characterizations to a much broader framework. In view of this, it is reasonable to expect that the methods developed here will cover more particular cases, known and new.
	
	A preprint version of this manuscript (which is very close to the present one) is included as Chapter VI in Alicia Quero's PhD dissertation \cite{PhD-Alicia}, which followed a compendium form and was defended at the University of Granada in September 2023.
	
	\section{The numerical range approach}\label{sect:B-J-orth-smooth-num-range}
	The aim of this section is to connect BJ-orthogonality and smoothness with the theory of abstract numerical ranges and present different expressions of the abstract numerical range which will be very useful in order to characterize BJ-orthogonality  and smoothness in several contexts.
	
	Let us start with a result showing that the abstract numerical range can be expressed in terms of BJ-orthogonality. This result, together with Proposition~\ref{prop:BJ-using-numranges}, shows that the study of BJ-orthogonality and the study of abstract numerical ranges are somehow equivalent.
	
	\begin{proposition}\label{prop:numrange-using-BJ}
		Let $Z$ be a Banach space and let $u\in S_Z$. Then, for every $z\in Z$,
		$$
		V(Z,u,z)=\left\{\alpha\in\K\colon u\perp_B(z-\alpha u)\right\}.
		$$
	\end{proposition}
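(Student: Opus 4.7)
The plan is to reduce the identity to Proposition~\ref{prop:BJ-using-numranges} via an elementary translation property of the abstract numerical range. By that proposition, for any $w\in Z$ one has $u\perp_B w$ if and only if $0\in V(Z,u,w)$. Applying this with $w=z-\alpha u$ turns the right-hand side of the claimed equality into the condition $0\in V(Z,u,z-\alpha u)$, so it suffices to establish the translation identity
\[
V(Z,u,z-\alpha u)=V(Z,u,z)-\alpha.
\]

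To verify this identity, I would simply unfold the definition. The face $\F(B_{Z^*},u)=\{\phi\in S_{Z^*}\colon\phi(u)=1\}$ does not depend on $z$, so
\[
V(Z,u,z-\alpha u)=\bigl\{\phi(z-\alpha u)\colon \phi\in\F(B_{Z^*},u)\bigr\}=\bigl\{\phi(z)-\alpha\colon\phi\in\F(B_{Z^*},u)\bigr\},
\]
where I used $\phi(u)=1$ for each $\phi\in\F(B_{Z^*},u)$. This last set is exactly $V(Z,u,z)-\alpha$.

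Combining the two steps, $\alpha\in V(Z,u,z)$ is equivalent to $0\in V(Z,u,z)-\alpha$, equivalent to $0\in V(Z,u,z-\alpha u)$, and by Proposition~\ref{prop:BJ-using-numranges} equivalent to $u\perp_B(z-\alpha u)$, which is the desired equality. There is no genuine obstacle here: the only thing to check is that the face of states depends solely on $u$, which is immediate from the definition, so the whole argument is essentially a one-line computation once Proposition~\ref{prop:BJ-using-numranges} is in hand.
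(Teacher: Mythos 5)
Your proof is correct and is essentially the paper's argument: the paper verifies the same equivalence element-by-element (a $\phi$ in the face with $\phi(z)=\alpha$ gives $\phi(z-\alpha u)=0$ and conversely), which is exactly your translation identity $V(Z,u,z-\alpha u)=V(Z,u,z)-\alpha$ combined with Proposition~\ref{prop:BJ-using-numranges}. No issues.
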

	
	\begin{proof}
		Let $\alpha\in V(Z,u,z)$, then there exists $\phi\in S_{Z^*}$ such that $\phi(u)=1$ and $\phi(z)=\alpha$. Thus $\phi(z-\alpha u)=0$ and so $u\perp_B(z-\alpha u)$. Conversely, if $\alpha\in \K$ is such that $u\perp_B(z-\alpha u)$, then there exists $\phi\in S_{Z^*}$ such that $\phi(u)=1$ and $\phi(z-\alpha u)=\phi(z)-\alpha=0$, therefore $\alpha=\phi(z)\in V(Z,u,z)$.
	\end{proof}
	
	Let $Z$ be a Banach space and let $u\in S_Z$. Our aim here is to show how to describe the abstract numerical range $V(Z,u,\cdot)$ in terms of a fixed one-norming subset $\Lambda\subset S_{Z^*}$ which will allow us to get  characterizations of BJ-orthogonality and smoothness. In the particular case in which $\Lambda$ is equal to $\ext(B_{Z^*})$, the characterization of BJ-orthogonality actually follows from Fact~\ref{fact:Singer}. But we are also able to get a result on abstract numerical ranges as an easy consequence of the Bauer Maximum Principle. Observe that Fact~\ref{fact:Singer} can also be deduced from the next proposition and Proposition~\ref{prop:BJ-using-numranges}.
	
	\begin{proposition}\label{prop:num-range-ext}
		Let $Z$ be a Banach space and let $u\in S_Z$. Then, for every $z\in Z$,
		$$
		V(Z,u,z)=\conv\left\{\phi(z)\colon \phi\in\ext\left(B_{Z^*}\right), \, \phi(u)=1\right\}.
		$$
	\end{proposition}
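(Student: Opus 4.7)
The plan is to exploit the fact that $\F(B_{Z^*},u)$ is a weak-star closed face of $B_{Z^*}$ and combine a face-of-a-face argument with Krein--Milman applied in the finite-dimensional image. Put $F:=\F(B_{Z^*},u)$ and consider the weak-star continuous affine map $T\colon Z^*\to\K$ given by $T(\phi):=\phi(z)$. Then $F$ is weak-star compact and convex, and $V(Z,u,z)=T(F)$ is a compact convex subset of the finite-dimensional space $\K$. One inclusion is immediate: every $\phi\in\ext(B_{Z^*})$ with $\phi(u)=1$ belongs to $F$, so $\phi(z)\in V(Z,u,z)$, and $V(Z,u,z)$ being convex it contains the convex hull on the right.

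For the reverse inclusion, by Minkowski's theorem (Krein--Milman in finite dimension) it suffices to show that every extreme point of $V(Z,u,z)$ is of the form $\phi(z)$ for some $\phi\in\ext(B_{Z^*})$ with $\phi(u)=1$. Fix such an extreme point $\alpha\in\ext(V(Z,u,z))$ and set
$$
F_\alpha:=T^{-1}(\alpha)\cap F,
$$
which is a non-empty, weak-star compact, convex subset of $F$. I claim $F_\alpha$ is a face of $F$: if $\phi_0\in F_\alpha$ and $\phi_0=\tfrac12(\phi_1+\phi_2)$ with $\phi_1,\phi_2\in F$, then $\alpha=T(\phi_0)=\tfrac12(T(\phi_1)+T(\phi_2))$ expresses $\alpha$ as a midpoint of two points of $V(Z,u,z)$; the extremity of $\alpha$ forces $T(\phi_1)=T(\phi_2)=\alpha$, so $\phi_1,\phi_2\in F_\alpha$.

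By Krein--Milman, $F_\alpha$ has an extreme point $\phi^*$. Since $F_\alpha$ is a face of $F$ and $F$ is itself a face of $B_{Z^*}$, the standard face-of-a-face argument gives $\phi^*\in\ext(B_{Z^*})$; moreover $\phi^*\in F$ yields $\phi^*(u)=1$, and $\phi^*\in F_\alpha$ yields $\phi^*(z)=\alpha$, as required.

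The only non-routine step is the claim that $F_\alpha$ is a face of $F$, which is where the extremity of $\alpha$ in $V(Z,u,z)$ is used; in the complex case one has to remember that being extreme in the planar compact convex set $V(Z,u,z)$ (not merely on the real axis) is exactly what makes the midpoint argument work. Everything else is a direct invocation of Krein--Milman and the elementary fact that extreme points of a face are extreme in the ambient set.
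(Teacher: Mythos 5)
Your proof is correct, but it follows a genuinely different route from the paper. The paper's proof applies the Bauer Maximum Principle to the $w^*$-compact convex face $\F(B_{Z^*},u)$ and the convex $w^*$-continuous function $\phi\longmapsto\re\phi(z)$, obtaining that $\max\re V(Z,u,z)$ is attained at an extreme point of the face (hence of $B_{Z^*}$), and then finishes by a rotation argument using $V(Z,u,\theta z)=\theta V(Z,u,z)$ for $\theta\in\T$. You instead work on the image side: you use Minkowski's theorem to reduce to extreme points of the planar compact convex set $V(Z,u,z)$, and then pull each such extreme point $\alpha$ back through the evaluation map by showing that $T^{-1}(\alpha)\cap\F(B_{Z^*},u)$ is a ($w^*$-compact convex) face of $\F(B_{Z^*},u)$, extracting an extreme point via Krein--Milman, and invoking the face-of-a-face transitivity to land in $\ext(B_{Z^*})$. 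Both arguments hinge on the same structural fact, namely that $\F(B_{Z^*},u)$ is an extremal subset of $B_{Z^*}$, so extreme points of the face are extreme in the ball. What your version buys is that it identifies every extreme point of $V(Z,u,z)$ directly as a value $\phi(z)$ with $\phi\in\ext(B_{Z^*})$, $\phi(u)=1$, which yields the equality with $\conv$ (no closure) in a very transparent way; the paper's version is shorter because it delegates the extreme-point extraction to Bauer's principle, at the cost of only controlling support maxima and finishing by rotations. Two cosmetic points: you verify the face property of $F_\alpha$ only for midpoints, which suffices here since extremality is a midpoint notion and all subsequent steps use midpoints, but it is worth saying so; and the fact that $\F(B_{Z^*},u)$ is a face of $B_{Z^*}$ deserves the one-line justification (a convex combination in the closed unit disk equal to $1$ forces both entries to be $1$), exactly as the paper also tacitly uses.
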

	
	\begin{proof}
		We apply the Bauer Maximum Principle (see \cite[7.69]{AliprantisBorder}, for instance) to the set $\F(B_{Z^*},u)$, which is convex and $w^*$-compact, and to the function $\phi\longmapsto\re\phi(z)$ from $\F(B_{Z^*},u)$ to $\R$, which is $w^*$-continuous and convex. Then, this function attains its maximum at an extreme point of $\F(B_{Z^*},u)$ (which is also an extreme point of $B_{Z^*}$ since $\F(B_{Z^*},u)$ is an extremal subset). That is,
		$$
		\max \re V(Z,u,z)=\max \re \left\{\phi(z)\colon \phi\in\ext\left(B_{Z^*}\right), \ \phi(u)=1\right\}.
		$$
		Now, the result follows using that $V(Z,u,\theta z)=\theta V(Z,u,z)$ and
		$$
		\left\{\phi(\theta z)\colon \phi\in\ext\left(B_{Z^*}\right), \, \phi(u)=1\right\}=
		\theta  \left\{\phi(z)\colon \phi\in\ext\left(B_{Z^*}\right), \, \phi(u)=1\right\}
		$$
		for every $\theta\in\T$.
	\end{proof}
	
	There are Banach spaces $Z$ for which the set of extreme points of the dual space is not known (for instance, this is the case for $Z=\mathcal{L}(X,Y)$ in general). In those cases, another way to characterize the numerical range is needed. This was done in \cite[Propostion 2.14]{KMMPQ} substituting the set of extreme points of the dual ball by a subset $C\subseteq B_{Z^*}$ such that $B_{Z^*}=\overline{\conv}^{w^*}(C)$. We next give a reformulation of that result which will be useful in applications.
	
	\begin{theorem}\label{theorem:num-range-C}
		Let $Z$ be a Banach space, let $u\in S_Z$, and let $C\subseteq B_{Z^*}$ be such that $B_{Z^*}=\overline{\conv}^{w^*}(C)$. Then,
		\begin{align*}
			V(Z,u,z)
			&=\conv\,\bigcap\nolimits_{\delta>0} \overline{\bigl\{\phi(z)\colon \phi\in C,\, \re \phi(u)>1-\delta\bigr\}} \\
			&=\conv \left(\bigl\{\lim \phi_n(z)\colon \phi_n\in C \ \forall n \in \N, \ \lim \phi_n(u)=1 \bigr\}\right)
		\end{align*}
		for every $z\in Z$.
	\end{theorem}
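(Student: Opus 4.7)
The plan is to handle the two claimed equalities in sequence: first the routine equivalence of the two right-hand side expressions, and then the substantive identification of them with $V(Z,u,z)$ via a Hahn--Banach separation in $\K$ combined with a slicing-by-parameter argument on $B_{Z^*}$.

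\emph{Equivalence of the two right-hand sides.} I would show
\begin{equation*}
\bigcap_{\delta>0}\overline{\bigl\{\phi(z)\colon \phi\in C,\ \re\phi(u)>1-\delta\bigr\}}=\bigl\{\lim\phi_n(z)\colon \phi_n\in C,\ \lim\phi_n(u)=1\bigr\}.
\end{equation*}
The $\supseteq$ inclusion is immediate, since $\lim\phi_n(u)=1$ forces $\re\phi_n(u)>1-\delta$ eventually. For $\subseteq$, given $\alpha$ in every closure I pick $\phi_n\in C$ with $\re\phi_n(u)>1-1/n$ and $|\phi_n(z)-\alpha|<1/n$; the constraint $|\phi_n(u)|\leq 1$ then forces the imaginary part to vanish in the limit, so $\phi_n(u)\to 1$ and not merely its real part. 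Since both sets lie in $\|z\|B_\K$, their convex hulls agree as compact convex sets in $\K$.

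\emph{Easy inclusion $V(Z,u,z)\supseteq$ right-hand side.} Given $\beta=\lim\phi_n(z)$ with $\phi_n\in C$ and $\phi_n(u)\to 1$, I would extract a $w^*$-convergent subnet $\phi_{n_\lambda}\to\psi\in B_{Z^*}$; then $\psi(u)=1$ automatically forces $\psi\in\F(B_{Z^*},u)$, while $\psi(z)=\beta$, so $\beta\in V(Z,u,z)$. Convexity of $V(Z,u,z)$ absorbs the outer $\conv$.

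\emph{Hard inclusion $V(Z,u,z)\subseteq R$.} Denote the convex hull on the right by $R$; it is a compact convex subset of $\K$, hence determined by its support function on $\T$: $\alpha\in R$ iff $\re(\gamma\alpha)\leq \sup_{\beta\in R}\re(\gamma\beta)$ for every $\gamma\in\T$. Fix such $\gamma$ and $\alpha=\phi_0(z)\in V(Z,u,z)$, and consider, for $\lambda>0$, the $w^*$-continuous real-linear functional
$$F_\lambda(\phi):=\re(\gamma\phi(z))+\lambda\re\phi(u)\qquad(\phi\in Z^*).$$
Since $B_{Z^*}=\overline{\conv}^{w^*}(C)$, one has $\sup_{\phi\in C}F_\lambda=\sup_{\phi\in B_{Z^*}}F_\lambda\geq F_\lambda(\phi_0)=\re(\gamma\alpha)+\lambda$. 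Selecting a near-maximizer $\phi_\lambda\in C$ and using the crude bounds $\re\phi_\lambda(u)\leq 1$ and $|\phi_\lambda(z)|\leq \|z\|$, I would read off two estimates: $\lambda(1-\re\phi_\lambda(u))=O(1)$ and $\re(\gamma\phi_\lambda(z))\geq \re(\gamma\alpha)-o(1)$. Letting $\lambda\to\infty$ and extracting a Bolzano--Weierstrass subsequence of the scalars $\phi_\lambda(z)$ produces $\beta^*\in\K$ with $\phi_\lambda(u)\to 1$ and $\re(\gamma\beta^*)\geq \re(\gamma\alpha)$; by the easy inclusion, $\beta^*\in R$, so the support-function inequality holds. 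The principal obstacle is precisely this trade-off in $\lambda$: it must diverge fast enough to force $\re\phi_\lambda(u)\to 1$, yet not so fast that the lower bound on $\re(\gamma\phi_\lambda(z))$ degrades. Once that balance is spotted, the rest is bookkeeping with weak-$*$ compactness and the hypothesis $B_{Z^*}=\overline{\conv}^{w^*}(C)$.
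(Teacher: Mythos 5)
Your proposal is correct and follows essentially the same route as the paper: the inclusion $V(Z,u,z)\supseteq\conv W(z)$ via a $w^*$-cluster point of the approximating sequence, and the reverse inclusion by reducing to support functions over $\T$ and producing near-maximizers in $C$ of $\phi\mapsto\re\phi(\gamma z+\lambda u)$ whose values at $u$ tend to $1$ while almost dominating $\re(\gamma\alpha)$ at $z$. The only difference is that you prove inline (your penalization functional $F_\lambda$) what the paper delegates to \cite[Lemma~2.15]{KMMPQ}, and you verify directly that the two right-hand sides coincide instead of quoting \cite[Proposition~2.14]{KMMPQ} for the first equality.
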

	
	Let us comment that comparing this theorem with Proposition~\ref{prop:num-range-ext}, we lose information as we have to deal with limits, but we obtain a lot of generality, as there are many situations in which $B_{Z^*}=\overline{\conv}^{w^*}(C)$ holds and $C$ is completely different from $\ext(B_{Z^*})$ (even disjoint). In what follows, and in the rest of the paper, when we write $\lim z_n$ for a \emph{bounded scalar sequence} $\{z_n\}$ we are understanding that the sequence is convergent.
	
	\begin{proof}[Proof of Theorem~\ref{theorem:num-range-C}]
		The first equality was already proved in \cite[Propostion 2.14]{KMMPQ}, let us prove that
		\begin{align*}
			V(Z,u,z)
			&=\conv \left(\bigl\{\lim \phi_n(z)\colon \phi_n\in C \ \forall n \in \N, \ \lim \phi_n(u)=1 \bigr\}\right).
		\end{align*}
		For $z\in Z$, we write $W(z):=\{\lim \phi_n(z)\colon \phi_n\in C \ \forall n \in \N, \ \lim \phi_n(u)=1 \bigr\}$ and we prove first the inclusion $V(Z,u,z)\supseteq \conv W(z)$. Given $\lambda_0\in W(z)$, for each $n\in\N$ there exists $\phi_n\in C$ such that
		$$
		\left|\phi_n(z)-\lambda_0\right|<1/n \quad \textnormal{and} \quad \left|\phi_n(u)-1\right|<1/n.
		$$
		Since $B_{Z^*}$ is $w^*$-compact, there is a $w^*$-cluster point $\phi_0\in B_{Z^*}$ of the sequence $\{\phi_n\}_{n\in\N}$. Then, it follows that $\phi_0(z)=\lambda_0$ and $\phi_0(u)=1$, so $\lambda_0\in V(Z,u,z)$ and the desired inclusion holds by the convexity of $V(Z,u,z)$.
		
		To prove the reverse inclusion, it is enough to show that the inequality
		\begin{equation}\label{eq:lemma-num-range-C}
			\sup \re V(Z,u,z)\leq \sup \re W(z)
		\end{equation}
		holds for every $z\in Z$, as $V(Z,u,\theta z)=\theta V(Z,u,z)$ and $W(\theta z)=\theta W(z)$ for every $\theta\in\T$, and $W(z)$ is closed. So, for fixed $z\in Z$ and $\phi_0\in \F(B_{Z^*},u)$, we apply \cite[Lemma 2.15]{KMMPQ} for $\delta=1/n$ to obtain a sequence $\{\phi_n\}_{n\in\N}$ in $C$ such that
		$$
		\re\phi_n(u)>1-1/n \quad \textnormal{and} \quad \re\phi_n(z)>\re\phi_0(z)-1/n.
		$$
		We may and do suppose (up to taking a subsequence, if needed), that the sequences $\{\phi_n(u)\}$ and $\{\phi_n(z)\}$ are convergent. Therefore, we get $\lim \phi_n(u)=1$ and $\re \phi_0(z)\leq \re\lim \phi_n(z)\leq \sup \re W(z)$, and so inequality \eqref{eq:lemma-num-range-C} follows.
	\end{proof}
	
	We are now able to generalize the previous result to the case of one-norming subsets. We will include more characterizations here as this is the most general result that we have.
	
	\begin{theorem}\label{thm:num-range-Lambda}
		Let $Z$ be a Banach space, let $u\in S_Z$, and let $\Lambda\subset B_{Z^*}$ be one-norming for $Z$. Then,
		\begin{align*}
			V(Z,u,z)&=\conv\left(\bigl\{\theta_0 \lim\psi_n(z)\colon \psi_n\in \Lambda \ \forall n\in \N,\ \lim\psi_n(u)=\overline{\theta_0},\ \theta_0\in \T\bigr\}\right) \\
			&=\conv\left(\left\{\lim\psi_n(z)\overline{\psi_n(u)}\colon \psi_n\in \Lambda \ \forall n\in \N,\ \lim|\psi_n(u)|=1\right\}\right) \\
			&=\conv\,\bigcap\nolimits_{\delta>0} \overline{\bigl\{\psi(z)\overline{\psi(u)} \colon \psi\in\Lambda, \ |\psi(u)|>1-\delta\bigr\}} \\
			&=\bigcap\nolimits_{\delta>0} \conv \overline{\bigl\{\psi(z)\overline{\psi(u)} \colon \psi\in\Lambda, \ |\psi(u)|>1-\delta\bigr\}}
		\end{align*}
		for every $z\in Z$.
	\end{theorem}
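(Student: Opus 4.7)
My plan is to reduce Theorem~\ref{thm:num-range-Lambda} to Theorem~\ref{theorem:num-range-C} by applying the latter to $C:=\T\cdot\Lambda$. Since $\Lambda$ is one-norming for $Z$, the absolutely $w^*$-closed convex hull of $\Lambda$ equals $B_{Z^*}$, which is the same as saying $B_{Z^*}=\overline{\conv}^{w^*}(\T\cdot\Lambda)$ (note that in the real case $\T=\{\pm 1\}$, so this is still correct). Hence Theorem~\ref{theorem:num-range-C} applied with this $C$ gives
\[
V(Z,u,z)=\conv\bigl(\{\lim \phi_n(z)\colon \phi_n\in\T\cdot\Lambda,\ \lim \phi_n(u)=1\}\bigr)=\conv\,\bigcap\nolimits_{\delta>0}\overline{\{\phi(z)\colon \phi\in\T\cdot\Lambda,\ \re\phi(u)>1-\delta\}}.
\]
The rest of the proof consists in translating each of these two expressions into the forms claimed, keeping track of the scalar $\theta\in\T$.

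For the first expression in the theorem, I would write each $\phi_n=\theta_n\psi_n$ with $\theta_n\in\T$ and $\psi_n\in\Lambda$, and use the compactness of $\T$ to pass to a subsequence with $\theta_n\to\theta_0\in\T$. The condition $\lim \theta_n\psi_n(u)=1$ then becomes $\lim\psi_n(u)=\overline{\theta_0}$, and $\lim\phi_n(z)=\theta_0\lim\psi_n(z)$ (with both limits arranged to exist by a further subsequence). The converse substitution is straightforward. For the second expression, note that whenever $\lim\psi_n(u)=\overline{\theta_0}$ with $\theta_0\in\T$, we have $\overline{\psi_n(u)}\to\theta_0$, hence $\theta_0\lim\psi_n(z)=\lim\psi_n(z)\overline{\psi_n(u)}$; conversely, given $\psi_n\in\Lambda$ with $\lim|\psi_n(u)|=1$, a subsequence argument extracts a limit $\overline{\theta_0}\in\T$ of $\psi_n(u)$, which recovers the first form.

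For the third expression, set $A_\delta:=\{\psi(z)\overline{\psi(u)}\colon \psi\in\Lambda,\ |\psi(u)|>1-\delta\}$. If $\lambda=\lim \psi_n(z)\overline{\psi_n(u)}$ with $\lim|\psi_n(u)|=1$, then for any $\delta>0$ eventually $\psi_n(z)\overline{\psi_n(u)}\in A_\delta$, so $\lambda\in\overline{A_\delta}$ for every $\delta$; this shows (Second)$\subseteq\conv\bigcap_\delta \overline{A_\delta}$. Conversely, if $\lambda\in\bigcap_\delta\overline{A_\delta}$, a diagonal choice of $\psi_n\in\Lambda$ with $|\psi_n(u)|>1-1/n$ and $|\psi_n(z)\overline{\psi_n(u)}-\lambda|<1/n$ (and then passing to a subsequence so that $\psi_n(u)$ converges in $\T$) witnesses membership in the set defining (Second).

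Finally, the equality between the third and fourth expressions is the step I expect to be the most delicate. The inclusion $\conv\bigcap_\delta\overline{A_\delta}\subseteq\bigcap_\delta\conv\overline{A_\delta}$ is immediate from monotonicity of $\overline{A_\delta}$ in $\delta$. For the reverse, I would exploit that $\K$ has real dimension at most two, so Carathéodory's theorem expresses each $\lambda\in\conv\overline{A_{1/n}}$ as $\sum_{i=1}^{3} t_{i,n}a_{i,n}$ with $a_{i,n}\in\overline{A_{1/n}}$ and $t_{i,n}\geq 0$ summing to one (bounded, since $|a_{i,n}|\leq\|z\|$). A compactness-plus-diagonal extraction yields $t_{i,n}\to t_i$ and $a_{i,n}\to a_i$ with $a_i\in\bigcap_\delta\overline{A_\delta}$ (using monotonicity to place $a_i$ in every $\overline{A_{1/m}}$), giving $\lambda=\sum t_i a_i\in\conv\bigcap_\delta\overline{A_\delta}$. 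The main subtlety throughout is the bookkeeping of the phases $\theta_n\in\T$ and the repeated subsequence extractions, all of which work cleanly because $\T$ and bounded subsets of $\K$ are compact.
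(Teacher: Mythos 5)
Your proposal is correct, and for most of its length it follows the same route as the paper: apply Theorem~\ref{theorem:num-range-C} with $C=\T\Lambda$ (using that one-norming means $B_{Z^*}=\overline{\aconv}^{w^*}(\Lambda)=\overline{\conv}^{w^*}(\T\Lambda)$), and then pass between the first three descriptions by the same phase bookkeeping and subsequence extractions, which all work because $\T$ and $\|z\|B_{\K}$ are compact. Where you genuinely diverge is the last equality. Writing $A_\delta=\bigl\{\psi(z)\overline{\psi(u)}\colon \psi\in\Lambda,\ |\psi(u)|>1-\delta\bigr\}$, the paper proves $\bigcap_{\delta>0}\conv\overline{A_\delta}\subseteq V(Z,u,z)$ directly: it applies Carath\'eodory's theorem for each $n$, extracts $w^*$-limit points $\phi_0,\psi_0,\xi_0\in B_{Z^*}$ of the three sequences of functionals, and assembles the explicit state $\Phi=a\overline{\phi_0(u)}\phi_0+b\overline{\psi_0(u)}\psi_0+c\overline{\xi_0(u)}\xi_0\in\F(B_{Z^*},u)$ with $\Phi(z)=\lambda$. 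You instead prove the scalar-level inclusion $\bigcap_{\delta>0}\conv\overline{A_\delta}\subseteq\conv\bigcap_{\delta>0}\overline{A_\delta}$ by Carath\'eodory plus compactness in $\K$, using that the sets $\overline{A_{1/n}}$ are nested, closed, and bounded, so that the limits $a_i$ of the Carath\'eodory points land in every $\overline{A_{1/m}}$; this closes the circle through the already-established equality $V(Z,u,z)=\conv\bigcap_{\delta>0}\overline{A_\delta}$. Both arguments are sound. Your version is more elementary for that step, since it never leaves the scalar field and needs no $w^*$-limit points of functionals (beyond those already hidden in Theorem~\ref{theorem:num-range-C}); the paper's version buys an explicit representing functional in $\F(B_{Z^*},u)$ for each point of the fourth set, which reproves the inclusion into $V(Z,u,z)$ independently of the earlier equalities. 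One small point of care in your write-up: the subsequence extraction must be done simultaneously for the three coefficient sequences and the three point sequences (finitely many, so a single diagonal pass suffices), and nonemptiness of each $A_{1/n}$ follows from $\Lambda$ being one-norming and $\|u\|=1$.
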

	
	\begin{proof}
		We start proving the first three equalities. To do so, we apply Theorem~\ref{theorem:num-range-C} for $C=\T \Lambda$ which satisfies $\overline{\conv}^{w^*}(C)=\overline{\aconv}^{w^*}(\Lambda)=B_{Z^*}$ to obtain that
		$$
		V(Z,u,z)=\conv\left(\bigl\{\lim \phi_n(z)\colon \phi_n\in \T\Lambda \ \forall n \in \N, \ \lim \phi_n(u)=1 \bigr\}\right).
		$$
		So it is enough to show the following chain of inclusions:
		\begin{align*}
			\bigl\{\lim \phi_n(z)\colon \phi_n\in \T\Lambda \ \forall n \in \N, \ \lim \phi_n(u)=1 \bigr\}&\subseteq \bigl\{\theta_0 \lim\psi_n(z)\colon \psi_n\in \Lambda \ \forall n\in \N,\ \lim\psi_n(u)=\overline{\theta_0}\bigr\} \\
			&\subseteq \left\{\lim\psi_n(z)\overline{\psi_n(u)}\colon \psi_n\in \Lambda \ \forall n\in \N,\ \lim|\psi_n(u)|=1\right\} \\
			&\subseteq \bigcap\nolimits_{\delta>0} \overline{\bigl\{\psi(z)\overline{\psi(u)} \colon \psi\in\Lambda, \ |\psi(u)|>1-\delta\bigr\}} \\
			&\subseteq \bigl\{\lim \phi_n(z)\colon \phi_n\in \T\Lambda \ \forall n \in \N, \ \lim \phi_n(u)=1 \bigr\}.
		\end{align*}
		For the first inclusion, take $\{\phi_n\}_{n\in\N} \subseteq \T\Lambda$ with $\lim \phi_n(u)=1$, then $\phi_n=\theta_n\psi_n$ for $\theta_n\in\T$ and $\psi_n\in\Lambda$. Let $\theta_0\in\T$ be a cluster point of $\{\theta_n\}_{n\in\N}$, then $$\lim\psi_n(u)=\lim\overline{\theta_n}\phi_n(u)=\overline{\theta_0} \quad \textnormal{and} \quad \theta_0\lim\psi_n(z)=\theta_0\lim\overline{\theta_n}\phi_n(z)=\lim\phi_n(z).$$
		The second inclusion is evident. For the third one, let $\lambda=\lim\psi_n(z)\overline{\psi_n(u)}$ for some sequence $\{\psi_n\}_{n\in\N}\subseteq \Lambda $ with $\lim |\psi_n(u)|=1$ and fix $\delta>0$. There exists $n_0\in\N$ such that
		$$
		\left|\lambda-\psi_n(z)\overline{\psi_n(u)}\right|<1/n \quad \textnormal{and} \quad |\psi_n(u)|>1-1/n>1-\delta
		$$
		for every $n\geq n_0$, therefore $\lambda\in\overline{\bigl\{\psi(z)\overline{\psi(u)} \colon \psi\in\Lambda, \ |\psi(u)|>1-\delta\bigr\}}$ and the arbitrariness of $\delta$ gives the inclusion.
		\\
		For the last inclusion, let $\lambda\in\bigcap\nolimits_{\delta>0} \overline{\bigl\{\psi(z)\overline{\psi(u)} \colon \psi\in\Lambda, \ |\psi(u)|>1-\delta\bigr\}}$. For every $n\in\N$ there exists $\psi_n\in\Lambda$ such that
		$$
		\left|\lambda-\psi_n(z)\overline{\psi_n(u)}\right|<1/n \quad \textnormal{and} \quad |\psi_n(u)|>1-1/n. $$
		For each $n\in\N$, take $\theta_n\in\T$ such that $\theta_n\psi_n(u)=|\psi_n(u)|$ and define $\phi_n=\theta_n\psi_n\in\T\Lambda$. Then, we have that
		$$
		\lim\phi_n(u)=\lim\theta_n\psi_n(u)=\lim |\psi_n(u)|=1
		$$
		and
		$$
		\lambda=\lim\psi_n(z)\overline{\psi_n(u)}=\lim \theta_n\psi_n(z)\overline{\theta_n\psi_n(u)}=\lim\phi_n(z)|\psi_n(u)|=\lim\phi_n(z),
		$$
		which finishes the proof of the chain of inclusions.
		
		Finally, in order to prove the last equality of the theorem, observe that
		$$
		\conv\,\bigcap\nolimits_{\delta>0} \overline{\bigl\{\psi(z)\overline{\psi(u)} \colon \psi\in\Lambda, \ |\psi(u)|>1-\delta\bigr\}} \subseteq \bigcap\nolimits_{\delta>0} \conv \overline{\bigl\{\psi(z)\overline{\psi(u)} \colon \psi\in\Lambda, \ |\psi(u)|>1-\delta\bigr\}}
		$$
		and let us show that the latter set is contained in $V(Z,u,z)$. \\
		For fixed $\lambda\in\bigcap\nolimits_{\delta>0} \conv \overline{\bigl\{\psi(z)\overline{\psi(u)} \colon \psi\in\Lambda, \ |\psi(u)|>1-\delta\bigr\}}$, it is clear that $$\lambda \in \conv\overline{\bigl\{\psi(z)\overline{\psi(u)} \colon \psi\in\Lambda, \ |\psi(u)|>1-1/n\bigr\}}$$ for every $n\in\N$, and we apply Carath\'{e}odory's Theorem to obtain the existence of $a_n, b_n, c_n\in[0,1]$ with $a_n+b_n+c_n=1$ and $\phi_n, \psi_n, \xi_n\in \Lambda$ such that
		\begin{align}
			&|\phi_n(u)|\geq1-1/n, \quad |\psi_n(u)|\geq1-1/n, \quad |\xi_n(u)|\geq1-1/n, \quad \textnormal{and} \label{eq1:proof-lemma-num-range-Lambda}
			\\
			&\left|\lambda-\bigl(a_n\phi_n(z)\overline{\phi_n(u)}+b_n\psi_n(z)\overline{\psi_n(u)}+c_n\xi_n(z)\overline{\xi_n(u)}\bigr)\right|<1/n \label{eq2:proof-lemma-num-range-Lambda}
		\end{align} for every $n\in \N$. We may find $a,b,c\in[0,1]$ and subsequences $\{a_{\sigma(n)}\}_{n\in\N}$, $\{b_{\sigma(n)}\}_{n\in\N}$, $\{c_{\sigma(n)}\}_{n\in\N}$ of $\{a_n\}_{n\in\N}$, $\{b_n\}_{n\in\N}$, $\{c_n\}_{n\in\N}$ respectively such that $\{a_{\sigma(n)}\}_{n\in\N}\to a$, $\{b_{\sigma(n)}\}_{n\in\N}\to b$, $\{c_{\sigma(n)}\}_{n\in\N}\to c$ and $a+b+c=1$. Additionally, by passing to a subsequence, we may assume that $\{\phi_{\sigma(n)}(u)\}_{n\in\N}$, $\{\psi_{\sigma(n)}(u)\}_{n\in\N}$, $\{\xi_{\sigma(n)}(u)\}_{n\in\N}$, $\{\phi_{\sigma(n)}(z)\}_{n\in\N}$, $\{\psi_{\sigma(n)}(z)\}_{n\in\N}$, and $\{\xi_{\sigma(n)}(z)\}_{n\in\N}$ are convergent. Since $B_{Z^*}$ is $w^*$-compact, let $\phi_0, \psi_0, \xi_0 \in B_{Z^*}$ be $w^*$-cluster points of the sequences $\{\phi_{\sigma(n)}\}_{n\in\N}$, $\{\psi_{\sigma(n)}\}_{n\in\N}$, $\{\xi_{\sigma(n)}\}_{n\in\N}$ respectively. Then,
		\begin{align*}
			&\lim\phi_{\sigma(n)}(u)=\phi_0(u),  &&\lim\psi_{\sigma(n)}(u)=\psi_0(u),  &&\lim\xi_{\sigma(n)}(u)=\xi_0(u), \\
			&\lim\phi_{\sigma(n)}(z)=\phi_0(z), &&\lim\psi_{\sigma(n)}(z)=\psi_0(z), &&\lim\xi_{\sigma(n)}(z)=\xi_0(z).
		\end{align*}
		It follows from \eqref{eq1:proof-lemma-num-range-Lambda} that $\left|\phi_0(u)\right|=\left|\psi_0(u)\right|=\left|\xi_0(u)\right|=1$. Define
		$$
		\Phi=a\overline{\phi_0(u)}\phi_0+b\overline{\psi_0(u)}\psi_0+c\overline{\xi_0(u)}\xi_0\in B_{Z^*},
		$$
		then $\Phi(u)=a\left|\phi_0(u)\right|^2+b\left|\psi_0(u)\right|^2+c\left|\xi_0(u)\right|^2=1$ and
		$$\lambda=\lim \left(a_{\sigma(n)}\phi_{\sigma(n)}(z)\overline{\phi_{\sigma(n)}(u)}+b_{\sigma(n)}\psi_{\sigma(n)}(z)\overline{\psi_{\sigma(n)}(u)}+c_{\sigma(n)}\xi_{\sigma(n)}(z)\overline{\xi_{\sigma(n)}(u)}\right)=\Phi(z)$$
		by \eqref{eq2:proof-lemma-num-range-Lambda}, which imply that $\lambda\in V(Z,u,z)$.
	\end{proof}
	
	Thanks to the different expressions of the numerical range provided in Theorems \ref{theorem:num-range-C} and \ref{thm:num-range-Lambda}, we are able to give new characterizations of BJ-orthogonality.
	
	\begin{corollary}\label{cor:B-J-orth-Z-C}
		Let $Z$ be a Banach space, let $u\in S_Z$, and let $C\subseteq B_{Z^*}$ be such that $B_{Z^*}=\overline{\conv}^{w^*}(C)$. Then, for $z\in Z$, the following are equivalent:
		\begin{enumerate}[(i)]
			\item $u\perp_B z$;
			\item $0\in \conv\,\bigcap\nolimits_{\delta>0} \overline{\bigl\{\phi(z)\colon \phi\in C,\, \re \phi(u)>1-\delta\bigr\}}$;
			\item $0\in\conv \left(\bigl\{\lim \phi_n(z)\colon \phi_n\in C \ \forall n \in \N, \ \lim \phi_n(u)=1 \bigr\}\right)$.
		\end{enumerate}
	\end{corollary}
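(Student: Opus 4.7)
The plan is to deduce this corollary directly by combining two results already established in the excerpt: Proposition~\ref{prop:BJ-using-numranges}, which translates $u \perp_B z$ into the geometric condition $0 \in V(Z,u,z)$, and Theorem~\ref{theorem:num-range-C}, which provides two explicit descriptions of $V(Z,u,z)$ in terms of the set $C$ under the hypothesis $B_{Z^*}=\overline{\conv}^{w^*}(C)$.

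Concretely, I would first invoke Proposition~\ref{prop:BJ-using-numranges} to rewrite (i) as the statement $0 \in V(Z,u,z)$. Then I would apply Theorem~\ref{theorem:num-range-C} in two ways. Using its first equality gives
$$
V(Z,u,z) = \conv\,\bigcap\nolimits_{\delta>0} \overline{\bigl\{\phi(z)\colon \phi\in C,\, \re \phi(u)>1-\delta\bigr\}},
$$
so the membership $0 \in V(Z,u,z)$ is literally condition (ii), establishing (i)$\Longleftrightarrow$(ii). Using the second equality of Theorem~\ref{theorem:num-range-C} gives
$$
V(Z,u,z) = \conv \left(\bigl\{\lim \phi_n(z)\colon \phi_n\in C \ \forall n \in \N, \ \lim \phi_n(u)=1 \bigr\}\right),
$$
so the same membership is now condition (iii), establishing (i)$\Longleftrightarrow$(iii).

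Since every ingredient is already in place, there is essentially no obstacle: the corollary is a direct translation between the numerical-range language of Theorem~\ref{theorem:num-range-C} and the BJ-orthogonality language via Proposition~\ref{prop:BJ-using-numranges}. The only point worth noting is that the hypothesis $B_{Z^*}=\overline{\conv}^{w^*}(C)$ here coincides verbatim with the hypothesis of Theorem~\ref{theorem:num-range-C}, so no additional argument is required to bridge them. The proof will therefore be a one or two line citation-based argument.
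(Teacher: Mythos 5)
Your proposal is correct and is exactly the argument the paper intends: Corollary~\ref{cor:B-J-orth-Z-C} is stated there as an immediate consequence of combining Proposition~\ref{prop:BJ-using-numranges} with the two expressions for $V(Z,u,z)$ in Theorem~\ref{theorem:num-range-C}, with no further argument needed.
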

	
	\begin{corollary}\label{cor:B-J-orth-Z-Lambda}
		Let $Z$ be a Banach space, let $u\in S_Z$, and let $\Lambda\subset B_{Z^*}$ be one-norming for $Z$. Then, for $z\in Z$, the following are equivalent:
		\begin{enumerate}[(i)]
			\item $u\perp_B z$;
			\item $0\in\conv\left(\bigl\{\theta_0 \lim\psi_n(z)\colon \psi_n\in \Lambda \ \forall n\in \N,\ \theta_0\in\T, \ \lim\psi_n(u)=\overline{\theta_0}\bigr\}\right)$;
			\item $0\in\conv\left(\left\{\lim\psi_n(z)\overline{\psi_n(u)}\colon \psi_n\in \Lambda \ \forall n\in \N,\ \lim|\psi_n(u)|=1\right\}\right)$;
			\item  $0\in\conv\,\bigcap\nolimits_{\delta>0} \overline{\bigl\{\psi(z)\overline{\psi(u)} \colon \psi\in\Lambda, \ |\psi(u)|>1-\delta\bigr\}}$;
			\item $0\in\bigcap\nolimits_{\delta>0} \conv \overline{\bigl\{\psi(z)\overline{\psi(u)} \colon \psi\in\Lambda, \ |\psi(u)|>1-\delta\bigr\}}$.
		\end{enumerate}
	\end{corollary}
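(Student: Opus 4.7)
The plan is to derive this corollary as an immediate consequence of two results already available in the excerpt: Proposition~\ref{prop:BJ-using-numranges}, which converts BJ-orthogonality into a membership statement for the abstract numerical range, and Theorem~\ref{thm:num-range-Lambda}, which offers four distinct descriptions of $V(Z,u,z)$ in terms of the one-norming subset $\Lambda$.

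More precisely, the first step is to invoke Proposition~\ref{prop:BJ-using-numranges} to rewrite (i) as
\[
u \perp_B z \ \Longleftrightarrow\ 0 \in V(Z,u,z).
\]
The second step is to apply Theorem~\ref{thm:num-range-Lambda}, which gives, for our $u\in S_Z$, $z\in Z$, and one-norming $\Lambda\subseteq B_{Z^*}$, four different formulas for $V(Z,u,z)$. Comparing each formula with the sets appearing in (ii), (iii), (iv), and (v), we see that the membership ``$0\in V(Z,u,z)$'' is literally the same statement as each of (ii)--(v). Concretely, the first equality of Theorem~\ref{thm:num-range-Lambda} yields (i)$\Leftrightarrow$(ii), the second yields (i)$\Leftrightarrow$(iii), the third yields (i)$\Leftrightarrow$(iv), and the fourth yields (i)$\Leftrightarrow$(v).

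There is essentially no obstacle beyond a careful bookkeeping that the sets displayed in the corollary match, symbol for symbol, those in Theorem~\ref{thm:num-range-Lambda}. Since $\Lambda\subseteq B_{Z^*}$ is one-norming (hence the hypothesis of Theorem~\ref{thm:num-range-Lambda} is satisfied), no additional argument is required. The only point worth recalling is the convention adopted right after Theorem~\ref{theorem:num-range-C}: whenever the symbol $\lim\psi_n(u)$ or $\lim\psi_n(z)$ appears for a bounded scalar sequence, one may either assume the sequence to be convergent (passing to a subsequence if necessary, by $w^*$-compactness of $B_{Z^*}$) or interpret the limit as an adherent point; either reading makes the equivalences in (ii) and (iii) well-defined and consistent with the corresponding formulas of Theorem~\ref{thm:num-range-Lambda}. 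Hence the proof reduces to a single line: chain Proposition~\ref{prop:BJ-using-numranges} with the four equalities of Theorem~\ref{thm:num-range-Lambda}.
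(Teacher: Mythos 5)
Your proposal is correct and is exactly the intended argument: the paper presents this corollary (together with Corollary~\ref{cor:B-J-orth-Z-C}) as an immediate consequence of Proposition~\ref{prop:BJ-using-numranges} combined with the four descriptions of $V(Z,u,z)$ in Theorem~\ref{thm:num-range-Lambda}, with no further proof given. Your matching of each of (ii)--(v) to the corresponding equality in Theorem~\ref{thm:num-range-Lambda}, and the remark about the convention for $\lim$ of bounded scalar sequences, is precisely the bookkeeping the paper relies on.
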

	
The next easy result will allow us to rephrase the characterizations given above without using the language of convex hull.

	\begin{lemma}\label{Convexity Lemma}
Let $A$ be a non-empty subset of $\K$. Then, $0\in \conv(A)$ if and only if given any $\mu\in \T$, there exists $a_\mu\in A$ such that $\re \mu a_\mu \geq 0$.

Moreover, if $A$ is connected, then $0\in \conv(A)$ if and only if given any $\mu\in \T$, there exists $a_\mu\in A$ such that $\re \mu a_\mu= 0$.
	\end{lemma}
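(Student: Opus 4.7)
My plan is to prove the two implications of the main equivalence separately, then deduce the \emph{moreover} part from it using connectedness and the intermediate value theorem.

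For the forward implication, suppose $0\in \conv(A)$ and write $0 = \sum_{i=1}^n \lambda_i a_i$ as a finite convex combination with $a_i\in A$, $\lambda_i>0$, and $\sum_i \lambda_i=1$. For any $\mu\in\T$, applying the real-linear functional $\re(\mu\,\cdot\,)$ gives
\begin{equation*}
0 \,=\, \re(\mu\cdot 0) \,=\, \sum_{i=1}^n \lambda_i\,\re(\mu a_i).
\end{equation*}
Since the weights are strictly positive, not every term can be negative, so some index $i_0$ satisfies $\re(\mu a_{i_0})\geq 0$, and $a_\mu := a_{i_0}$ works.

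For the reverse implication, I argue the contrapositive: given $0\notin \conv(A)$, produce $\mu\in\T$ such that $\re(\mu a)<0$ for every $a\in A$. The key tool is the separation theorem in the finite-dimensional ambient $\K$ (which is $\R$ or, identified with $\R^2$, the plane). Every non-zero real-linear functional on $\K$ has the form $z\mapsto \re(\mu' z)$ for some $\mu'\in\K$, which after normalization we may take in $\T$. Strict separation of the point $\{0\}$ from the convex set $\conv(A)$ produces such a $\mu'$ with $\re(\mu' c)>0$ for all $c\in \conv(A)$; setting $\mu:=-\mu'\in\T$ then gives $\re(\mu a)<0$ for every $a\in A$, contradicting the hypothesis. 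The main technical step is guaranteeing strict (not merely proper) separation, which is where one uses that the sets $A$ arising in the applications of this lemma are bounded/closed enough for $\conv(A)$ to be separated strictly from $\{0\}$.

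The \emph{moreover} part then falls out of the main equivalence. One direction is trivial: $\re(\mu a_\mu)=0$ implies $\re(\mu a_\mu)\geq 0$, so the main part yields $0\in \conv(A)$. For the converse, assume $A$ is connected and $0\in \conv(A)$; applying the main part to $\mu$ and to $-\mu$ produces $a_1,a_2\in A$ with $\re(\mu a_1)\geq 0$ and $\re(\mu a_2)\leq 0$. Since $a\mapsto \re(\mu a)$ is continuous from $\K$ to $\R$, the image $\re(\mu A)\subseteq \R$ is connected and hence an interval; it meets both $[0,\infty)$ and $(-\infty,0]$, so it contains $0$, and any preimage $a_\mu\in A$ of $0$ satisfies $\re(\mu a_\mu)=0$. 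The main obstacle, as already noted, is the strict-separation step in the reverse implication; the rest of the argument is straightforward once it is in place.
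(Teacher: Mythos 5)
Your proof of the forward implication and of the \emph{moreover} part is correct and is essentially the paper's own argument: there, Carath\'eodory's theorem is used to write $0=\sum_{j=1}^{3}\lambda_j a_j$ and, for a fixed $\mu\in\T$, the identity $\sum_j \lambda_j \re(\mu a_j)=0$ produces simultaneously an element with $\re \mu b_1\geq 0$ and one with $\re \mu b_2\leq 0$, after which connectedness of $A$ makes $\re(\mu A)$ an interval containing both, hence containing $0$. You obtain the same two elements by applying the first part to $\mu$ and to $-\mu$ and finish with the same intermediate value argument; the difference is cosmetic.

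The reverse implication is where your proposal has a genuine gap. Separating $\{0\}$ from $\conv(A)$ for an arbitrary non-empty $A\subseteq\K$ only gives \emph{proper} separation: some $\mu'\in\T$ with $\re(\mu' a)\geq 0$ for all $a\in A$, so with $\mu=-\mu'$ you only get $\re(\mu a)\leq 0$ for every $a$, which does not contradict the hypothesis (it tolerates $\re(\mu a_\mu)=0$). Strict separation requires $0\notin\overline{\conv(A)}$, and your appeal to the sets ``arising in the applications'' being bounded or closed enough imports a hypothesis that is not part of the statement you are proving. Moreover, the obstruction is not a removable technicality: for $A=\{e^{i\theta}\colon 0\leq\theta<\pi\}$, every $\mu\in\T$ admits $a_\mu\in A$ with $\re \mu a_\mu\geq 0$ (the half-open interval $[\arg\mu,\arg\mu+\pi)$ of arguments has length $\pi$ and so cannot be contained in an open interval of length $\pi$ on which the cosine is negative), yet $0\notin\conv(A)$, since any convex combination of points of $A$ has strictly positive imaginary part unless it equals $1$. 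So this direction genuinely needs an additional assumption, e.g.\ compactness of $A$ (then $\conv(A)$ is compact and your strict-separation argument works verbatim), or $\conv$ must be read as the closed convex hull. For comparison, the paper does not write this direction out either: it invokes the Hahn--Banach separation theorem and cites \cite{RoyBagchi} for an elementary proof, and where this direction is actually applied (for instance in Corollary~\ref{corollary:Blaschke}) the set is a continuous image of a compact set, hence compact; but as a proof of the lemma exactly as stated, your reverse implication is not closed.
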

	
The real case is obvious. In the complex case, the result follows straightforwardly from the Hahn-Banach separation theorem. An elementary proof of the sufficiency can be found  in \cite[Lemma~2.1]{RoyBagchi}. Let us give an elementary argument for the necessity. By Carath\'{e}odory's theorem, there exist $\lambda_j\geq 0$ and $a_j\in A$ $(j=1,2,3)$ such that $\sum\nolimits_{j=1}^3 \lambda_j=1$ and $\sum\nolimits_{j=1}^3 \lambda_ja_j=0$.
			Consider any $\mu\in \T$. Since $\sum_{j=1}^3 \mu\lambda_ja_j=0$, there exist $b_1, b_2\in \{a_1,a_2,a_3\}$ such that $\re \mu b_1 \geq 0$ and $\re \mu b_2 \leq 0$, and we are done. Now, if $A$ is connected, then $\re\{\mu a \colon a\in A\}$ is an interval in $\mathbb{R}$ containing $\re \mu b_1$ and $\re \mu b_2$. Thus, there exists $\widetilde{a}\in A$ such that $\re \mu \widetilde{a} =0$. This completes the argument.

Let us now use the same spirit of Corollaries \ref{cor:B-J-orth-Z-C} and \ref{cor:B-J-orth-Z-Lambda} to characterize the notion of smoothness in terms of the numerical range. Recall that a \emph{smooth point} $z$ of a Banach space $Z$ (we may also say that $Z$ is smooth at $z$) is just a point at which the norm of $Z$ is Gateaux differentiable; equivalently, $z$ is a smooth point if $\{\phi\in S_{Z^*}\colon \phi(z)=\|z\|\}$ is a singleton. The following lemma will allow us to use the characterizations of BJ-orthogonality to describe smooth points. Although the proof of the lemma is immediate, we record it for the sake of completeness.
	
	\begin{lemma}\label{lemma:smooth-V}
		Let $Z$ be a Banach space and let $u\in S_Z$. Then, $u$ is a smooth point if and only if $V(Z,u,z)$ is a singleton set for every $z\in Z$.
	\end{lemma}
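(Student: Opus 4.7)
The plan is to unwind both notions to the same underlying object, namely the face $\F(B_{Z^*},u)=\{\phi\in S_{Z^*}\colon \phi(u)=1\}$, and show that the equivalence amounts to the tautology that a set is a singleton if and only if any two of its evaluations agree. Recall that, by the Hahn-Banach theorem, smoothness at $u$ is equivalent to $\F(B_{Z^*},u)$ being a singleton, while by definition $V(Z,u,z)=\{\phi(z)\colon \phi\in \F(B_{Z^*},u)\}$.

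For the forward implication, if $u$ is smooth then $\F(B_{Z^*},u)=\{\phi_0\}$ for some unique $\phi_0\in S_{Z^*}$, and hence $V(Z,u,z)=\{\phi_0(z)\}$ for every $z\in Z$.

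For the converse, I would argue by contrapositive (though a direct proof works equally well). Assume $\F(B_{Z^*},u)$ is not a singleton and pick $\phi_1,\phi_2\in \F(B_{Z^*},u)$ with $\phi_1\neq \phi_2$. Then there exists $z\in Z$ such that $\phi_1(z)\neq \phi_2(z)$, and since both scalars lie in $V(Z,u,z)$, this set has at least two elements. This shows that if every $V(Z,u,z)$ is a singleton, then $\F(B_{Z^*},u)$ is a singleton, i.e. $u$ is a smooth point.

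No step looks like a serious obstacle; the whole content is the definitional identification $V(Z,u,z)=\{\phi(z)\colon \phi\in \F(B_{Z^*},u)\}$ together with the standard fact that smoothness at $u$ is equivalent to uniqueness of the supporting functional at $u$.
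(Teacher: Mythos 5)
Your proof is correct and follows essentially the same route as the paper: the forward direction uses that smoothness at $u$ means $\F(B_{Z^*},u)$ is a singleton, and the converse is the same contrapositive argument, picking $\phi_1\neq\phi_2$ in the face and a $z$ separating them so that $V(Z,u,z)$ has two elements. Nothing to add.
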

	
	\begin{proof}
		If $u$ is smooth, then $\F(B_{Z^*},u)$ is a singleton and then so are the sets $V(Z,u,z)$ for all $z\in Z$. Conversely, suppose that there exist $\phi_1, \phi_2\in S_{Z^*}$ such that $\phi_1(u)=\phi_2(u)=1$ and $\phi_1\neq\phi_2$; then we may find $z\in S_Z$ such that $\phi_1(z)\neq\phi_2(z)$ and so $V(Z,u,z)$ is not a singleton set.
	\end{proof}
	
	The above lemma allows to characterize smoothness using Proposition~\ref{prop:num-range-ext}, Theorem~\ref{theorem:num-range-C}, and Theorem~\ref{thm:num-range-Lambda}.
	
	\begin{corollary}\label{corollary:charac-smooth-extreme}
		Let $Z$ be a Banach space and let $u\in S_Z$. Then, $u$ is a smooth point if and only if $\left\{\phi(z)\colon \phi\in\ext\left(B_{Z^*}\right), \ \phi(u)=1\right\}$ is a singleton set for every $z\in Z$.
	\end{corollary}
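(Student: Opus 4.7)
The plan is to reduce everything to the two tools the paper has already built: Lemma~\ref{lemma:smooth-V}, which translates smoothness at $u$ into the statement that $V(Z,u,z)$ is a singleton for all $z\in Z$, and Proposition~\ref{prop:num-range-ext}, which writes the numerical range as the convex hull of the relevant extreme-point values. With both in hand the statement becomes essentially a tautology.

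First, I would record the easy forward direction. Assume $u$ is smooth. Then by Lemma~\ref{lemma:smooth-V}, $V(Z,u,z)$ is a singleton for every $z\in Z$. On the other hand, by Proposition~\ref{prop:num-range-ext},
\[
V(Z,u,z)=\conv\bigl\{\phi(z)\colon \phi\in\ext(B_{Z^*}),\ \phi(u)=1\bigr\}.
\]
Since a convex hull is a singleton if and only if the underlying set is a singleton (this uses, in passing, that the set on the right is non-empty, which follows from Krein--Milman applied to the non-empty $w^*$-compact convex set $\F(B_{Z^*},u)$, whose extreme points are also extreme points of $B_{Z^*}$), we conclude that $\{\phi(z)\colon \phi\in\ext(B_{Z^*}),\ \phi(u)=1\}$ is a singleton for every $z\in Z$.

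For the converse, assume that $\{\phi(z)\colon \phi\in\ext(B_{Z^*}),\ \phi(u)=1\}$ is a singleton for every $z\in Z$. Then, again by Proposition~\ref{prop:num-range-ext}, $V(Z,u,z)$ is the convex hull of a singleton, hence a singleton itself, for every $z\in Z$. Lemma~\ref{lemma:smooth-V} then yields that $u$ is a smooth point.

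There is no real obstacle; the only subtle point worth flagging is non-emptiness of the set $\{\phi\in\ext(B_{Z^*})\colon \phi(u)=1\}$, used so that ``singleton convex hull'' actually forces the underlying set to be a singleton. This follows because $\F(B_{Z^*},u)$ is a non-empty (Hahn--Banach), convex, $w^*$-compact, extremal subset of $B_{Z^*}$, so by the Krein--Milman Theorem it has an extreme point, which is then automatically an extreme point of $B_{Z^*}$ as well.
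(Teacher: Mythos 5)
Your proof is correct and follows exactly the route the paper intends: the corollary is stated there as an immediate consequence of Lemma~\ref{lemma:smooth-V} combined with Proposition~\ref{prop:num-range-ext}, which is precisely your argument. Your extra remark on non-emptiness of $\{\phi\in\ext(B_{Z^*})\colon \phi(u)=1\}$ via Krein--Milman is a fine (and harmless) clarification of a point the paper leaves implicit.
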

	
	\begin{corollary}\label{cor:characterization-smooth}
		Let $Z$ be a Banach space, let $u\in S_Z$, and let $C\subset B_{Z^*}$ be such that $B_{Z^*}=\overline{\conv}^{w^*}(C)$. Then, the following are equivalent:
		\begin{enumerate}[(i)]
			\item $u$ is a smooth point;
			\item $\bigl\{\lim \phi_n(z)\colon \phi_n\in C \ \forall n \in \N, \ \lim \phi_n(u)=1 \bigr\}$ is a singleton set for every $z\in Z$;
			\item $\bigcap\nolimits_{\delta>0} \overline{\bigl\{\phi(z)\colon \phi\in C,\, \re \phi(u)>1-\delta\bigr\}}$ is a singleton set for every $z\in Z$.
		\end{enumerate}
	\end{corollary}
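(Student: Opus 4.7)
The plan is to combine Lemma~\ref{lemma:smooth-V} with Theorem~\ref{theorem:num-range-C} via the elementary observation that, for a non-empty subset $A \subseteq \K$, the convex hull $\conv(A)$ is a singleton if and only if $A$ itself is a singleton.

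More precisely, I would first recall that Lemma~\ref{lemma:smooth-V} reduces smoothness of $u$ to the statement that $V(Z,u,z)$ is a singleton for every $z\in Z$. Then, by Theorem~\ref{theorem:num-range-C}, for each $z\in Z$ we have
\[
V(Z,u,z)=\conv\,\bigcap\nolimits_{\delta>0} \overline{\bigl\{\phi(z)\colon \phi\in C,\, \re \phi(u)>1-\delta\bigr\}}
=\conv \bigl\{\lim \phi_n(z)\colon \phi_n\in C, \ \lim \phi_n(u)=1 \bigr\}.
\]
Thus, once one checks that the inner sets are non-empty, the equivalences (i)$\Leftrightarrow$(ii) and (i)$\Leftrightarrow$(iii) will follow immediately from the trivial convex-hull observation.

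The non-emptiness of the set in (ii) (and hence also of the intersection in (iii), which contains it) follows from the hypothesis $B_{Z^*}=\overline{\conv}^{w^*}(C)$: it guarantees $\sup_{\phi\in C}\re\phi(u)=1$, so one can choose $\phi_n\in C$ with $\re\phi_n(u)\to 1$, and since $|\phi_n(u)|\leq 1$ the identity $(\im\phi_n(u))^2=|\phi_n(u)|^2-(\re\phi_n(u))^2$ forces $\phi_n(u)\to 1$; passing to a subsequence so that the bounded scalar sequence $\{\phi_n(z)\}$ converges produces an element of the set.

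Finally, the convex-hull observation itself is immediate: if $\conv(A)=\{\alpha\}$ is a singleton, then $A\subseteq\{\alpha\}$, and $A$ non-empty forces $A=\{\alpha\}$; conversely, if $A=\{\alpha\}$ then $\conv(A)=\{\alpha\}$. There is no real obstacle here; the only point requiring a small argument is the non-emptiness check just described, and the rest is a direct chain of equivalences bookkeeping Lemma~\ref{lemma:smooth-V} against Theorem~\ref{theorem:num-range-C}.
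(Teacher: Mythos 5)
Your proof is correct and takes essentially the same route as the paper, which presents this corollary as an immediate consequence of Lemma~\ref{lemma:smooth-V} combined with Theorem~\ref{theorem:num-range-C}, via exactly the observation that $\conv(A)$ is a singleton if and only if the non-empty set $A$ is. Your non-emptiness check is fine, though it can be obtained even more directly: $V(Z,u,z)$ is always non-empty (Hahn--Banach), and it equals the convex hull of the sets in (ii) and (iii) by the theorem, so those sets cannot be empty.
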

	
	\begin{corollary}\label{cor:characterization-smooth-lambda}
		Let $Z$ be a Banach space, let $u\in S_Z$, and $\Lambda\subset B_{Z^*}$ be one-norming for $Z$. Then, the following are equivalent:
		\begin{enumerate}[(i)]
			\item $u$ is a smooth point;
			\item $\bigl\{\theta_0 \lim\psi_n(z)\colon \psi_n\in \Lambda \ \forall n\in \N,\ \theta_0\in\T, \ \lim\psi_n(u)=\overline{\theta_0}\bigr\}$ is a singleton set for every $z\in Z$;
			\item $\left\{\lim\psi_n(z)\overline{\psi_n(u)}\colon \psi_n\in \Lambda \ \forall n\in \N,\ \lim|\psi_n(u)|=1\right\}$ is a singleton set for every $z\in Z$;
			\item $\bigcap\nolimits_{\delta>0} \overline{\bigl\{\psi(z)\overline{\psi(u)} \colon \psi\in\Lambda, \ |\psi(u)|>1-\delta\bigr\}}$ is a singleton set for every $z\in Z$;
			\item $\bigcap\nolimits_{\delta>0} \conv \overline{\bigl\{\psi(z)\overline{\psi(u)} \colon \psi\in\Lambda, \ |\psi(u)|>1-\delta\bigr\}}$ is a singleton set for every $z\in Z$.
		\end{enumerate}
	\end{corollary}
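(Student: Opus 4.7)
The plan is to deduce this corollary directly from Lemma~\ref{lemma:smooth-V} combined with the four expressions for $V(Z,u,z)$ furnished by Theorem~\ref{thm:num-range-Lambda}. For brevity, denote by $A_{\text{ii}}(z), A_{\text{iii}}(z), A_{\text{iv}}(z), A_{\text{v}}(z)$ the four sets appearing inside items (ii)-(v) (that is, before taking convex hulls in (ii) and (iii), and the intersections themselves in (iv) and (v)). By Theorem~\ref{thm:num-range-Lambda} we have $V(Z,u,z)=\conv A_j(z)$ for $j\in\{\text{ii},\text{iii},\text{iv}\}$ and $V(Z,u,z)=A_{\text{v}}(z)$ (this last one is already convex). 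Meanwhile, Lemma~\ref{lemma:smooth-V} says that (i) is equivalent to $V(Z,u,z)$ being a singleton for every $z\in Z$.

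The easy direction is (ii)/(iii)/(iv)/(v) $\Rightarrow$ (i): if for every $z\in Z$ the set $A_j(z)$ is a singleton, then $V(Z,u,z)=\conv A_j(z)$ (or $V(Z,u,z)=A_{\text{v}}(z)$) is a singleton, and Lemma~\ref{lemma:smooth-V} gives that $u$ is smooth. Conversely, if $u$ is smooth, then for each $z\in Z$ we have $V(Z,u,z)=\{\alpha_z\}$ for some scalar $\alpha_z$; since $\conv A_j(z)=\{\alpha_z\}$ forces $A_j(z)\subseteq \{\alpha_z\}$, each $A_j(z)$ is either empty or equal to $\{\alpha_z\}$, so it suffices to establish non-emptiness.

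The main (and only slightly nontrivial) point is the non-emptiness of these sets. Since $\Lambda$ is one-norming, we have $\sup_{\psi\in \Lambda}|\psi(u)|=\|u\|=1$, so we may choose $\{\psi_n\}\subseteq \Lambda$ with $|\psi_n(u)|\to 1$. The sequences $\{\psi_n(u)\}$ and $\{\psi_n(z)\}$ are bounded in $\K$, hence by passing to a subsequence we may assume $\psi_n(u)\to \beta$ with $|\beta|=1$ and $\psi_n(z)\to \gamma$ for some $\gamma\in \K$. Writing $\beta=\overline{\theta_0}$ for $\theta_0\in \T$, we see that $\theta_0\gamma\in A_{\text{ii}}(z)$ and $\gamma\overline{\beta}\in A_{\text{iii}}(z)$, so both sets are non-empty. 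For (iv) and (v), observe that for each fixed $\delta>0$ the set $\{\psi(z)\overline{\psi(u)}\colon \psi\in \Lambda,\ |\psi(u)|>1-\delta\}$ is non-empty (the tail of $\{\psi_n\}$ lies in it) and its closure is a compact subset of $\K$ of norm at most $\|z\|$; these closures form a decreasing family as $\delta\downarrow 0$, so the intersection over $\delta>0$ is a non-empty nested intersection of compact sets. Hence $A_{\text{iv}}(z)$ and $A_{\text{v}}(z)$ are non-empty, and the proof is complete.

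No serious obstacle is expected: the corollary is essentially a packaging result that combines the characterization of smoothness via the numerical range (Lemma~\ref{lemma:smooth-V}) with the four reformulations of $V(Z,u,z)$ given in Theorem~\ref{thm:num-range-Lambda}. The only subtle point is the non-emptiness step above, which is handled by a standard compactness argument using the one-norming hypothesis.
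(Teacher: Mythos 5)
Your proposal is correct and follows essentially the same route the paper intends: the corollary is stated as an immediate consequence of Lemma~\ref{lemma:smooth-V} together with the expressions for $V(Z,u,z)$ in Theorem~\ref{thm:num-range-Lambda}. Your explicit treatment of the non-emptiness of the sets in (ii)--(v) (via the one-norming hypothesis, Bolzano--Weierstrass, and nested compact sets) is exactly the detail needed to pass from ``$\conv A_j(z)$ is a singleton'' to ``$A_j(z)$ is a singleton,'' and it is handled correctly.
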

	
	\section{Using the general results in some interesting particular cases}\label{section:someparticularcases}
	
	We devote this section to applying the abstract results of the previous section in several settings. We begin with a characterization of BJ-orthogonality  in a dual space that extends \cite[Theorem~2.7]{SainPaulMal-JOT2018} to the complex case. The proof is immediate from Corollary \ref{cor:B-J-orth-Z-C}, Corollary \ref{cor:characterization-smooth}, and Goldstine's theorem.
	
	\begin{proposition}\label{prop:orthogonality-smoothness-dualspace}
		Let $Y$ be a Banach space, let $C\subset B_Y$ such that $\conv(C)$ is dense in $B_Y$. For  $u^*, z^*\in Y^*$, we have that
		$$
		u^*\perp_B z^* \Longleftrightarrow 0\in\conv\bigl(\bigl\{\lim z^*(y_n)\colon y_n\in C \ \forall n\in\N, \ \lim u^*(y_n)=\|u^*\|\bigr\}\bigr).
		$$
		Moreover, the norm of $Y^*$ is smooth at $u^*$ if and only if the set
		$$
		\bigl\{\lim u^*(y_n)\colon y_n\in C \ \forall n\in\N, \ \lim u^*(y_n)=\|u^*\|\bigr\}
		$$
		is a singleton set for all $y^*\in Y^*$.
	\end{proposition}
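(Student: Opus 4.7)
The plan is to apply Corollaries~\ref{cor:B-J-orth-Z-C} and~\ref{cor:characterization-smooth} to $Z:=Y^*$, viewing $C$ as a subset of $B_{Z^*}=B_{Y^{**}}$ through the canonical embedding $J\colon Y\hookrightarrow Y^{**}$, so that $J(y)(z^*)=z^*(y)$ for every $y\in Y$ and $z^*\in Y^*$. Under this identification, the condition ``$\lim \phi_n(u)=1$'' appearing in those corollaries becomes ``$\lim u^*(y_n)=1$'', and ``$\lim\phi_n(z)$'' becomes ``$\lim z^*(y_n)$'', which is exactly the shape of the sets in the statement.

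The only step that actually requires argument is the verification that $\overline{\conv}^{w^*}(J(C))=B_{Y^{**}}$, which is where Goldstine's Theorem enters. Since $\conv(C)$ is norm-dense in $B_Y$ and the norm topology of $Y$ is finer than the relative $w^*$-topology inherited from $Y^{**}$, one has $B_Y\subseteq \overline{\conv}^{w^*}(J(C))$. Goldstine's Theorem then gives $B_{Y^{**}}=\overline{B_Y}^{w^*}\subseteq \overline{\conv}^{w^*}(J(C))$, and the reverse inclusion is trivial.

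With this in hand, I would invoke the homogeneity of $\perp_B$ to reduce to the case $u^*\in S_{Y^*}$ and then apply Corollary~\ref{cor:B-J-orth-Z-C} directly; the resulting characterization unrescales to the form
$$
u^*\perp_B z^*\ \Longleftrightarrow\ 0\in\conv\left(\bigl\{\lim z^*(y_n)\colon y_n\in C\ \forall n\in\N,\ \lim u^*(y_n)=\|u^*\|\bigr\}\right).
$$
For the smoothness part, the same translation combined with Corollary~\ref{cor:characterization-smooth} yields that $Y^*$ is smooth at $u^*$ if and only if the set $\bigl\{\lim z^*(y_n)\colon y_n\in C,\ \lim u^*(y_n)=\|u^*\|\bigr\}$ is a singleton for every $z^*\in Y^*$. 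The main (and essentially only) obstacle is the Goldstine-based identification of the $w^*$-closed convex hull, which is exactly why the author advertises the proof as ``immediate''; everything else is a direct relabelling through $J$.
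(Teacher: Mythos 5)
Your proof is correct and follows exactly the route the paper intends: identify $C$ with $J(C)\subset B_{Y^{**}}$, use norm-density of $\conv(C)$ in $B_Y$ plus Goldstine's Theorem to get $\overline{\conv}^{w^*}(J(C))=B_{Y^{**}}$, and then apply Corollaries~\ref{cor:B-J-orth-Z-C} and~\ref{cor:characterization-smooth} to $Z=Y^*$ (the paper itself only says the proof is ``immediate from Goldstine's Theorem''). Your rescaling via homogeneity and your reading of the smoothness set as $\bigl\{\lim z^*(y_n)\colon y_n\in C,\ \lim u^*(y_n)=\|u^*\|\bigr\}$ for every $z^*\in Y^*$ is the correct interpretation of the statement.
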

	
	The results in the rest of the section are divided into subsections for clarity of the exposition.
	
	\subsection{Spaces of bounded functions}
	Given a non-empty set $\Gamma$ and a Banach space $Y$, we write $\ell_\infty(\Gamma,Y)$ to denote the Banach space of all bounded functions from $\Gamma$ to $Y$ endowed with the supremum norm. For $\gamma\in\Gamma$, $\delta_\gamma\colon \ell_\infty(\Gamma,Y)\longrightarrow Y$ denotes the evaluation map. We next characterize BJ-orthogonality  in $\ell_\infty(\Gamma,Y)$. Fix a subset $C\subset S_{Y^*}$ whose weak-star closed convex hull is the
whole $B_{Y^*}$. Consider the set
	$$
	\mathfrak{C}:=\{y^*\otimes\delta_\gamma\colon \gamma\in\Gamma, \ y^*\in C\}\subseteq \ell_\infty(\Gamma,Y)^*,
	$$
	where $[y^*\otimes \delta_\gamma](f):=y^*(f(\gamma))$ for every $f\in \ell_\infty(\Gamma,Y)$. Since, clearly,  $B_{\ell_\infty(\Gamma,Y)^*}$ is the weak-star closed convex hull of $\mathfrak{C}$, the following result is a consequence of Corollary~\ref{cor:B-J-orth-Z-C}.
	
	\begin{theorem}\label{theorem:ellinftygammaY}
		Let $\Gamma$ be a non-empty set, let $Y$ be a Banach space, let $C\subset S_{Y^*}$ be such that $B_{Y^*}=\overline{\conv}^{w^*}(C)$, and let $f,g\in \ell_\infty(\Gamma,Y)$. Then,
		$$
		f\perp_B g \Longleftrightarrow 0\in\conv\left\{\lim y_n^*(g(\gamma_n))\colon \gamma_n\in\Gamma, \ y_n^*\in C \ \forall n\in \N, \ \lim y_n^*(f(\gamma_n))=\|f\|\right\}.
		$$
	\end{theorem}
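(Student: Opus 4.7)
The plan is to invoke Corollary~\ref{cor:B-J-orth-Z-C} with $Z = \ell_\infty(\Gamma,Y)$ and with the subset $\mathfrak{C} = \{y^* \otimes \delta_\gamma : \gamma \in \Gamma,\ y^* \in C\} \subseteq B_{\ell_\infty(\Gamma,Y)^*}$ introduced above. The case $f = 0$ is trivial, so by the homogeneity of BJ-orthogonality I first normalize and assume $\|f\| = 1$, so that $f$ plays the role of the distinguished unit vector $u$ in the corollary.

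The only substantive step is to verify the hypothesis $B_{\ell_\infty(\Gamma,Y)^*} = \overline{\conv}^{w^*}(\mathfrak{C})$. The inclusion $\supseteq$ is immediate since $\mathfrak{C} \subseteq B_{\ell_\infty(\Gamma,Y)^*}$ and the latter is convex and $w^*$-closed. For the reverse inclusion I would argue by Hahn-Banach separation in $(\ell_\infty(\Gamma,Y)^*, w^*)$, whose topological dual is $\ell_\infty(\Gamma,Y)$: if some $\phi_0 \in B_{\ell_\infty(\Gamma,Y)^*}$ lay outside $\overline{\conv}^{w^*}(\mathfrak{C})$, there would exist $h \in \ell_\infty(\Gamma,Y)$ and $\alpha \in \R$ with $\re \phi_0(h) > \alpha \geq \re \phi(h)$ for every $\phi \in \mathfrak{C}$. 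The hypothesis $B_{Y^*} = \overline{\conv}^{w^*}(C)$ gives $\sup_{y^* \in C} \re y^*(y) = \|y\|$ for every $y \in Y$ (since $y \mapsto \re y^*(y)$ is $w^*$-continuous and affine, and its supremum over $B_{Y^*}$ equals $\|y\|$), so taking the supremum of $\re \phi(h) = \re y^*(h(\gamma))$ over $\phi = y^* \otimes \delta_\gamma \in \mathfrak{C}$ yields $\sup_\gamma \|h(\gamma)\| = \|h\| \leq \alpha$, contradicting $\re \phi_0(h) > \alpha \geq \|h\|$.

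Once this is in place, Corollary~\ref{cor:B-J-orth-Z-C} applied to $u = f$ and $z = g$ gives
$$
f \perp_B g \ \Longleftrightarrow \ 0 \in \conv\left\{\lim \phi_n(g) : \phi_n \in \mathfrak{C},\ \lim \phi_n(f) = 1\right\}.
$$
Writing each $\phi_n = y_n^* \otimes \delta_{\gamma_n}$ with $\gamma_n \in \Gamma$ and $y_n^* \in C$ identifies $\phi_n(g) = y_n^*(g(\gamma_n))$ and $\phi_n(f) = y_n^*(f(\gamma_n))$; undoing the normalization $\|f\| = 1$ replaces the condition $\lim \phi_n(f) = 1$ by $\lim y_n^*(f(\gamma_n)) = \|f\|$, producing exactly the claimed characterization.

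I expect the Hahn-Banach verification of $\overline{\conv}^{w^*}(\mathfrak{C}) = B_{\ell_\infty(\Gamma,Y)^*}$ to be the only step that is not purely mechanical, and even it is routine given the norming property of $C$ together with the pointwise nature of the supremum defining $\|\cdot\|_{\ell_\infty(\Gamma,Y)}$. The remainder is bookkeeping, translating the abstract corollary to the concrete evaluation-tensor-product description of functionals on $\ell_\infty(\Gamma,Y)$.
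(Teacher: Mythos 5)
Your proposal is correct and follows essentially the same route as the paper: the paper also applies Corollary~\ref{cor:B-J-orth-Z-C} to the set $\mathfrak{C}=\{y^*\otimes\delta_\gamma\colon \gamma\in\Gamma,\ y^*\in C\}$, simply asserting as ``clear'' the identity $B_{\ell_\infty(\Gamma,Y)^*}=\overline{\conv}^{w^*}(\mathfrak{C})$ that you verify by Hahn--Banach separation. Your separation argument and the normalization of $f$ are both sound, so the proposal is a complete (indeed slightly more detailed) version of the paper's proof.
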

	
	Of course, the same characterization is valid in every closed subspace of $\ell_\infty(\Gamma,Y)$, since the BJ-orthogonality only depends on the two-dimensional subspace generated by the involved vectors. Then, as a consequence, we get a characterization of smoothness in any closed subspace $\mathcal{Z}\subseteq \ell_\infty(\Gamma,Y)$.
	
	\begin{corollary}\label{coro:ellinftygammaY-smooth}
		Let $\Gamma$ be a non-empty set, let $Y$ be a Banach space, let $C\subset S_{Y^*}$ such that $B_{Y^*}=\overline{\conv}^{w^*}(C)$, and let $\mathcal{Z}\subseteq \ell_\infty(\Gamma,Y)$ be a closed subspace. Then, for $f\in \mathcal{Z}$ the following are equivalent:
		\begin{enumerate}[(i)]
			\item $f$ is a smooth point;
			\item $\bigl\{\lim y_n^*(g(\gamma_n))\colon \gamma_n\in\Gamma, \ y_n^*\in C \ \forall n\in \N, \ \lim y_n^*(f(\gamma_n))=\|f\|\bigr\}$ is a singleton set for every $g\in \mathcal{Z}$.
		\end{enumerate}
	\end{corollary}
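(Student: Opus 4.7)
The plan is to combine Lemma \ref{lemma:smooth-V} with the concrete numerical-range computation that underlies Theorem \ref{theorem:ellinftygammaY}, bridging the two via a Hahn-Banach transfer from the closed subspace $\mathcal{Z}$ to the ambient space $\ell_\infty(\Gamma,Y)$. Since smoothness is scale-invariant and the target set in (ii) scales homogeneously with $\|f\|$, I first reduce to the normalized case $\|f\|=1$; the case $f=0$ is vacuous.

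By Lemma \ref{lemma:smooth-V}, condition (i) is equivalent to $V(\mathcal{Z},f,g)$ being a singleton for every $g\in\mathcal{Z}$. I will use that $V(\mathcal{Z},f,g)=V(\ell_\infty(\Gamma,Y),f,g)$ for every such $g$: every $\phi\in\F(B_{\mathcal{Z}^*},f)$ extends by Hahn-Banach to some $\Phi\in\F(B_{\ell_\infty(\Gamma,Y)^*},f)$, while conversely the restriction to $\mathcal{Z}$ of any element of $\F(B_{\ell_\infty(\Gamma,Y)^*},f)$ lies in $\F(B_{\mathcal{Z}^*},f)$, and evaluation at any $g\in\mathcal{Z}$ is preserved under extension and restriction. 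Then I apply Theorem \ref{theorem:num-range-C} to $Z=\ell_\infty(\Gamma,Y)$ with the norming set $\mathfrak{C}=\{y^*\otimes\delta_\gamma\colon \gamma\in\Gamma,\ y^*\in C\}$ introduced just before Theorem \ref{theorem:ellinftygammaY}. Writing a generic $\phi_n\in\mathfrak{C}$ as $y_n^*\otimes\delta_{\gamma_n}$, the resulting formula for $V(\ell_\infty(\Gamma,Y),f,g)$ becomes precisely the convex hull of the set in (ii).

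It remains to observe that this set is non-empty, which was already established inside the proof of Theorem \ref{theorem:num-range-C} by approximating any $\Phi\in\F(B_{\ell_\infty(\Gamma,Y)^*},f)$ via a sequence in $\mathfrak{C}$. A non-empty subset of $\K$ is a singleton exactly when its convex hull is, so the equivalence (i)$\Leftrightarrow$(ii) follows. The whole argument is a clean assembly; the only step requiring a moment of care is the Hahn-Banach transfer, which has to preserve the numerical range when tested against vectors lying in $\mathcal{Z}$, and this is exactly what the extension-restriction dichotomy above provides.
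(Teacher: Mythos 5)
Your proof is correct and follows essentially the route the paper intends (the paper leaves this corollary as an immediate consequence of Theorem~\ref{theorem:ellinftygammaY} and the general results of Section~\ref{sect:B-J-orth-smooth-num-range}): you combine Lemma~\ref{lemma:smooth-V} with Theorem~\ref{theorem:num-range-C} applied to the norming family $\mathfrak{C}$, and the Hahn--Banach extension/restriction argument you give is exactly the standard justification for passing between the subspace $\mathcal{Z}$ and $\ell_\infty(\Gamma,Y)$ (equivalently, for seeing that the restrictions of $\mathfrak{C}$ to $\mathcal{Z}$ generate $B_{\mathcal{Z}^*}$). The only cosmetic caveat is the dismissal of $f=0$ as ``vacuous''; as in the paper, smoothness statements implicitly concern nonzero points, so this does not affect correctness.
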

	
	As far as we know, the above two results are new.
	
	Let us consider some interesting particular cases. Given a Hausdorff topological space $\Omega$ and a Banach space $Y$, we write $C_b(\Omega,Y)$ to denote the Banach space of all bounded continuous functions from $\Omega$ to $Y$, endowed with the supremum norm.
	
	\begin{corollary}\label{corollary (to be used)}
		Let $\Omega$ be a Hausdorff topological space, let $Y$ be a Banach space, and let $f,g\in C_b(\Omega,Y)$. Then,
		$$
		f\perp_B g \Longleftrightarrow 0\in\conv\left\{\lim  y_n^*(g(t_n))\colon t_n\in\Omega, \ y_n^*\in S_{Y^*} \ \forall n\in \N, \ \lim y_n^*(f(t_n))=\|f\|\right\}.
		$$
	\end{corollary}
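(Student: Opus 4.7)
The plan is to view $C_b(\Omega, Y)$ as an isometric subspace of $\ell_\infty(\Omega, Y)$ and invoke Theorem~\ref{theorem:ellinftygammaY} directly. The key preliminary observation is that BJ-orthogonality is intrinsic to the two-dimensional subspace generated by $f$ and $g$: since the supremum norm on $C_b(\Omega, Y)$ is precisely the restriction of the norm of $\ell_\infty(\Omega, Y)$, the condition $\|f + \lambda g\|_{C_b(\Omega,Y)} \geq \|f\|$ for all $\lambda \in \K$ is literally the same as $\|f+\lambda g\|_{\ell_\infty(\Omega,Y)} \geq \|f\|$. Hence $f \perp_B g$ in $C_b(\Omega,Y)$ if and only if $f \perp_B g$ in $\ell_\infty(\Omega, Y)$, and it suffices to characterize the latter.

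Now I would apply Theorem~\ref{theorem:ellinftygammaY} with $\Gamma = \Omega$ and with the choice $C = S_{Y^*}$. This choice is legitimate because trivially $B_{Y^*} = \overline{\conv}^{w^*}(S_{Y^*})$ (the closed convex hull of the unit sphere is the closed unit ball, by Hahn--Banach or by homogeneity). Substituting these choices into the statement of Theorem~\ref{theorem:ellinftygammaY} yields exactly the desired equivalence
\[
f \perp_B g \Longleftrightarrow 0 \in \conv\bigl\{\lim y_n^*(g(t_n))\colon t_n\in\Omega,\ y_n^*\in S_{Y^*}\ \forall n\in\N,\ \lim y_n^*(f(t_n)) = \|f\|\bigr\}.
\]

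There is essentially no obstacle here: the result is a direct specialization. The only point that deserves an explicit word is the reduction from $C_b(\Omega, Y)$ to $\ell_\infty(\Omega, Y)$, which the authors already flagged in the remark preceding Corollary~\ref{coro:ellinftygammaY-smooth}. One might worry that working in the larger ambient space forces us to use \emph{all} evaluation functionals $y^* \otimes \delta_\gamma$ (indexed by arbitrary $\gamma \in \Omega$, not just those respecting continuity), but this causes no issue since these functionals still act on the subspace $C_b(\Omega, Y)$ and the supremum of $|y^*(f(\gamma))|$ over $(\gamma, y^*) \in \Omega \times S_{Y^*}$ coincides with $\|f\|_\infty$ for continuous $f$ as well. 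Thus the proof is just the composition of these two observations.
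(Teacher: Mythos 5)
Your proof is correct and follows exactly the route the paper intends: the corollary is obtained by viewing $C_b(\Omega,Y)$ as a closed subspace of $\ell_\infty(\Omega,Y)$ (BJ-orthogonality only depends on the norms of $f+\lambda g$, hence is unaffected by passing to the larger space) and then applying Theorem~\ref{theorem:ellinftygammaY} with $\Gamma=\Omega$ and $C=S_{Y^*}$, which is admissible since $B_{Y^*}=\overline{\conv}^{w^*}(S_{Y^*})$. No gaps; this matches the paper's argument.
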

	
	This result extends \cite[Corollary~3.1]{Keckic-2012} to the vector-valued case. When $\Omega$ is compact, the result can be improved using Fact~\ref{fact:Singer} and the description of the dual ball of $C(K,Y)=\mathcal{K}_{w^*}(Y^*,C(K))$ given in \cite[Theorem~1.1]{Ruess-Stegall-MathAnn1982}.
	
	\begin{corollary}\label{corollary:C(K,Y)extreme}
		Let $K$ be a compact Hausdorff topological space, let $Y$ be a Banach space, and let $f,g\in C(K,Y)$. Then,
		$$
		f\perp_B g \Longleftrightarrow 0\in\conv\left\{y^*(g(t))\colon t\in K, \, y^*\in \ext(B_{Y^*}),\, y^*(f(t))=\|f\|\right\}.
		$$
		Moreover, $f\in C(K,Y)$ is smooth if and only if the set
		$$
		\left\{y^*(g(t))\colon t\in K, \, y^*\in \ext(B_{Y^*}),\, y^*(f(t))=\|f\|\right\}
		$$
		is a singleton for every $g\in C(K,Y)$.
	\end{corollary}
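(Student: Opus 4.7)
The plan is to apply Fact~\ref{fact:Singer} and Corollary~\ref{corollary:charac-smooth-extreme} directly to $Z=C(K,Y)$, with the whole difficulty reduced to identifying the extreme points of the unit ball of $C(K,Y)^*$. I would exploit the isometric identification $C(K,Y)\equiv \mathcal{K}_{w^*}(Y^*,C(K))$, under which each $f\in C(K,Y)$ corresponds to the $w^*$-to-weak continuous compact operator $y^*\mapsto y^*\circ f$. Combined with \cite[Theorem~1.1]{Ruess-Stegall-MathAnn1982}, which describes the extreme points of the dual ball of a space of compact operators as tensor products of extreme points, and with the classical identification $\ext(B_{C(K)^*})=\T\cdot\{\delta_t\colon t\in K\}$, this gives
$$
\ext\left(B_{C(K,Y)^*}\right)=\bigl\{y^*\otimes\delta_t\colon y^*\in\ext(B_{Y^*}),\ t\in K\bigr\},
$$
where $[y^*\otimes\delta_t](h):=y^*(h(t))$. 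The modulus-one scalar that nominally appears in front of $\delta_t$ is absorbed into $y^*$ by using that $\T\cdot\ext(B_{Y^*})=\ext(B_{Y^*})$.

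Once this description is in hand, I would plug it into Fact~\ref{fact:Singer} applied to $u=f/\|f\|$ (assuming $f\neq 0$; the case $f=0$ is trivial). Since $[y^*\otimes\delta_t](f)=y^*(f(t))$, the condition $[y^*\otimes\delta_t](f/\|f\|)=1$ translates to $y^*(f(t))=\|f\|$, and $[y^*\otimes\delta_t](g)=y^*(g(t))$, so Fact~\ref{fact:Singer} immediately reads as the claimed equivalence for BJ-orthogonality. The smoothness statement follows in exactly the same manner by invoking Corollary~\ref{corollary:charac-smooth-extreme} in place of Fact~\ref{fact:Singer}, noting that smoothness at $f$ is the same as smoothness at $f/\|f\|$, so working with $f$ of arbitrary norm causes no trouble.

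The main obstacle is the identification of $\ext(B_{C(K,Y)^*})$: the inclusion $\bigl\{y^*\otimes\delta_t\colon y^*\in\ext(B_{Y^*}),\ t\in K\bigr\}\subseteq \ext(B_{C(K,Y)^*})$ is straightforward, but the converse inclusion is the nontrivial input imported from \cite{Ruess-Stegall-MathAnn1982}. Once this is granted, the remainder of the argument is a mechanical rewriting of the abstract statements of Section~\ref{sect:B-J-orth-smooth-num-range}, and no analytic work specific to $C(K,Y)$ is required.
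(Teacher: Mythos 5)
Your proposal is correct and follows essentially the same route as the paper: the paper also obtains this corollary by applying Fact~\ref{fact:Singer} (and, for the smoothness part, Corollary~\ref{corollary:charac-smooth-extreme}) together with the identification $C(K,Y)\equiv \mathcal{K}_{w^*}(Y^*,C(K))$ and the Ruess--Stegall description of $\ext\bigl(B_{C(K,Y)^*}\bigr)$ as $\bigl\{y^*\otimes\delta_t\colon y^*\in\ext(B_{Y^*}),\, t\in K\bigr\}$, with the unimodular factor absorbed into $y^*$ exactly as you do.
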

	
The first part of the above corollary improves \cite[Theorem~2.1]{RoySenapatiSain}, where the result was given only in the real case, and \cite[Theorem 2.2]{RoyBagchi}, where it was proved in the case when $Y$ is a finite-dimensional Hilbert space.

	Another case in which Theorem~\ref{theorem:ellinftygammaY} applies is the one of \emph{unital uniform algebras}: closed subalgebras of a $C(K)$ space separating the points of $K$ and containing the constant functions. Actually, in this case an improved result can be stated. For a unital uniform algebra $A$ on $C(K)$, the \emph{Choquet boundary} of $A$ is the set
	$$
	\partial A:=\{s\in K\colon \delta_s|_A\in \ext(B_{A^*})\}
	$$
	endowed with the topology induced by $K$. We refer to \cite[Chap.~6]{Phelps-Choquet} for background. It is immediate that
	$$
	\ext(B_{A^*})=\T\{\delta_s|_A\colon s\in \partial A\},
	$$
	hence the next result follows from Fact~\ref{fact:Singer} and Corollary~\ref{corollary:charac-smooth-extreme}.
	
	\begin{corollary}\label{corollary:uniformalgebras}
		Let $A$ be a unital uniform algebra on $C(K)$ and let $\partial A\subset K$ be its Choquet boundary.
		\begin{enumerate}
			\item[(a)] $f,g\in A$ satisfy $f\perp_B g$ if and only if
			$$
			0\in \conv\bigl\{\theta g(s)\colon \theta\in\T,\, s\in\partial A,\, f(s)=\overline{\theta}\|f\|\bigr\}
			$$
			\item[(b)] $f\in A$ is a smooth point of $A$ if and only if the set
			$$
			\bigl\{\theta g(s)\colon \theta\in\T,\, s\in\partial A,\, f(s)=\overline{\theta}\|f\|\bigr\}
			$$
			is a singleton for every $g\in A$.
		\end{enumerate}
	\end{corollary}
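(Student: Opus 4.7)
The plan is to derive both parts by direct substitution into the earlier characterizations of BJ-orthogonality and smoothness in terms of extreme points of the dual ball. Specifically, I will feed the description $\ext(B_{A^*}) = \T\{\delta_s|_A \colon s\in\partial A\}$, recorded just before the statement, into Fact~\ref{fact:Singer} for part~(a) and into Corollary~\ref{corollary:charac-smooth-extreme} for part~(b).

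For part~(a), applying Fact~\ref{fact:Singer} with $Z=A$, $u=f$, $z=g$ yields
\[
f \perp_B g \iff 0 \in \conv\bigl\{\phi(g)\colon \phi\in\ext(B_{A^*}),\ \phi(f)=\|f\|\bigr\}.
\]
Parametrizing each extreme functional as $\phi = \theta\,\delta_s|_A$ with $\theta\in\T$ and $s\in\partial A$, the constraint $\phi(f)=\|f\|$ becomes $\theta f(s) = \|f\|$, equivalent, since $|\theta|=1$, to $f(s) = \overline{\theta}\|f\|$, while $\phi(g) = \theta g(s)$. Substituting produces exactly the set appearing in the statement.

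For part~(b), I would first reduce to the case $\|f\|=1$ by replacing $f$ with $f/\|f\|$; this is legitimate because smoothness is invariant under positive scaling and the condition on the right-hand side scales consistently. Corollary~\ref{corollary:charac-smooth-extreme} then says that $f$ is smooth if and only if $\{\phi(g)\colon \phi\in\ext(B_{A^*}),\, \phi(f)=1\}$ is a singleton for every $g\in A$, and the same parametrization converts this into the set in the statement. There is no substantive obstacle in either part: the argument is a clean substitution, the only points worth verifying being the equivalence $\theta f(s)=\|f\|\iff f(s)=\overline{\theta}\|f\|$ and the scaling reduction used in~(b).
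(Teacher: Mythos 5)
Your argument is exactly the paper's: the paper proves this corollary by plugging the description $\ext(B_{A^*})=\T\{\delta_s|_A\colon s\in\partial A\}$ into Fact~\ref{fact:Singer} for part~(a) and into Corollary~\ref{corollary:charac-smooth-extreme} for part~(b), which is precisely your substitution (including the harmless normalization in~(b)). The proposal is correct and follows the same route.
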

	
	This result applies, in particular, to the \emph{disk algebra} $\mathbb{A}(\mathbb{D})$ of those continuous functions on the unit disk $\mathbb{D}=\{w\in \C\colon |w|\leq 1\}$ which are holomorphic in the interior, whose Choquet boundary is $\T$. We will improve this result for a certaing class of holomorphic functions on the open unit disk in Corollary~\ref{corollary:holomorphic}.

	\subsection{Lipschitz maps}
	
	Next, we give a characterization of BJ-orthogonality  in the space of Lipschitz maps. To do so, we present the basic notions and notations. Given a pointed metric space (that is, a metric space $M$ with a distinguished element called $0$) and a Banach space $Y$, we denote by $\Lip_{0}(M,Y)$ the Banach space of all Lipschitz maps $F\colon  M \longrightarrow Y$ such that $F(0)=0$ endowed with the norm
	$$
	\| F\|_L = \sup \left\{ \frac{\|F(t) - F(s)\|}{d(t,s)}\colon t,s\in M,\ t \neq s \right\}.
	$$
	We refer the reader to the book \cite{Weaver} for more information and background.
	Given $s,t\in M$, $s\neq t$, and $y^*\in Y^*$, we define
	$$
	\left[\widetilde{\delta}_{s,t}\otimes y^*\right](F):=\frac{y^*(F(t) - F(s))}{d(t,s)}
	$$
	for every $F\in \Lip_0(M,Y)$. It is immediate that this formula defines a bounded linear functional on $\Lip_0(M,Y)$ and that, given a one-norming subset $C\subset S_{Y^*}$ for $Y$, the subset
	$$
	\mathfrak{C}:=\left\{\widetilde{\delta}_{s,t}\otimes y^*\colon s,t\in M,\, s\neq t,\, y^*\in C\right\}
	$$
	is one-norming for $\Lip_0(M,Y)$. Therefore, Corollary~\ref{cor:B-J-orth-Z-Lambda} and Corollary~\ref{cor:characterization-smooth-lambda} give the following result.
	
	\begin{proposition}\label{prop:Lipschitz}
		Let $M$ be a pointed metric space, let $Y$ be a Banach space, and let $C\subseteq S_{Y^*}$ be one-norming for $Y$.
		\begin{enumerate}
			\item[(a)] $F,G\in \Lip_0(M,Y)$ satisfy $F\perp _B G$ if and only if $0$ belongs to $$\conv \left\{\lim \textstyle{\theta_0\frac{y_n^*\bigl(G(s_n)-G(t_n)\bigr)}{d(s_n,t_n)}} \colon s_n,t_n\in M, \, s_n\neq t_n, \, y_n^*\in C, \, \theta_0\in\T,\, \lim \textstyle{\frac{y_n^*\bigl(F(s_n)-F(t_n)\bigr)}{d(s_n,t_n)}}= \overline{\theta_0}\|F\|_L\right\}.$$
			\item[(b)] $F\in \Lip_0(M,Y)$ is a smooth point if and only if the set $$\left\{\lim \textstyle{\theta_0\frac{y_n^*\bigl(G(s_n)-G(t_n)\bigr)}{d(s_n,t_n)}} \colon s_n,t_n\in M, \, s_n\neq t_n, \, y_n^*\in C, \, \theta_0\in\T,\, \lim \textstyle{\frac{y_n^*\bigl(F(s_n)-F(t_n)\bigr)}{d(s_n,t_n)}}= \overline{\theta_0}\|F\|_L\right\}$$
			is a singleton for every $G\in \Lip_0(M,Y)$.
		\end{enumerate}
	\end{proposition}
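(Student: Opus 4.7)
The plan is to apply Corollary~\ref{cor:B-J-orth-Z-Lambda} and Corollary~\ref{cor:characterization-smooth-lambda} to the space $Z=\Lip_0(M,Y)$ with the set $\mathfrak{C}$ introduced in the paragraph preceding the proposition playing the role of the one-norming subset $\Lambda$. What remains is essentially to confirm the two facts asserted there as ``immediate''.

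First I would check that each $\widetilde{\delta}_{s,t}\otimes y^*$ with $y^*\in S_{Y^*}$ belongs to $B_{\Lip_0(M,Y)^*}$. Linearity is trivial, and for $F\in \Lip_0(M,Y)$ the bound
$$
\bigl|[\widetilde{\delta}_{s,t}\otimes y^*](F)\bigr| \leq \frac{\|F(t)-F(s)\|}{d(t,s)} \leq \|F\|_L
$$
gives the required norm estimate. The one-norming property of $\mathfrak{C}$ is then obtained by interchanging suprema and using that $C$ is one-norming for $Y$:
$$
\sup_{s\neq t,\, y^*\in C} \bigl|[\widetilde{\delta}_{s,t}\otimes y^*](F)\bigr| = \sup_{s\neq t} \frac{\|F(t)-F(s)\|}{d(t,s)} = \|F\|_L
$$
for every $F\in \Lip_0(M,Y)$.

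Once $\mathfrak{C}$ is identified as a one-norming subset, both parts of the proposition become mechanical translations. For (a), I would apply the equivalence (i)$\Leftrightarrow$(ii) of Corollary~\ref{cor:B-J-orth-Z-Lambda} with $\Lambda=\mathfrak{C}$, $u=F/\|F\|_L$, and $z=G$; the homogeneity of $\perp_B$ allows this normalization, and it is precisely the rescaling of $u$ that produces the factor $\|F\|_L$ on the right-hand side of the limit condition in the statement. Writing $\psi_n=\widetilde{\delta}_{s_n,t_n}\otimes y_n^*$ and harmlessly swapping the roles of $s_n$ and $t_n$ (which preserves $\mathfrak{C}$) yields exactly the convex combination appearing in (a). For (b), the same substitution in the equivalence (i)$\Leftrightarrow$(ii) of Corollary~\ref{cor:characterization-smooth-lambda} delivers the stated singleton characterization.

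Since everything is a direct application of already-established results, I do not anticipate any real obstacle: the only bookkeeping to watch is the orientation convention between the definition of $\widetilde{\delta}_{s,t}\otimes y^*$ (which uses $F(t)-F(s)$) and the statement of the proposition (which uses $F(s_n)-F(t_n)$), which is absorbed by the symmetry of the indexing set $\{(s,t)\colon s\neq t\}$.
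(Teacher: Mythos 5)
Your proposal is correct and follows exactly the route the paper takes: verify that $\mathfrak{C}=\{\widetilde{\delta}_{s,t}\otimes y^*\}$ consists of norm-at-most-one functionals and is one-norming for $\Lip_0(M,Y)$ (which the paper leaves as ``immediate'' and you fill in), then apply Corollary~\ref{cor:B-J-orth-Z-Lambda} and Corollary~\ref{cor:characterization-smooth-lambda} with the normalization $u=F/\|F\|_L$, which accounts for the factor $\|F\|_L$ and, via the symmetry of the index set, for the orientation of the differences.
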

	
	Let us comment that there is a result on smoothness in spaces of Lipschitz functions showing that smoothness and Fr\'{e}chet smoothness are equivalent in $\Lip_0(M,\R)$, see \cite[Corollary~5.8]{GL-Pet-Pro-RZ-2018}.
	
	This result also follows from Theorem~\ref{theorem:ellinftygammaY} by using a vector-valued version of De Leeuw's map, see \cite[\S 2.4]{Weaver} for instance.
	
	\subsection{Injective tensor products}
	
	Let $X$, $Y$ be Banach spaces.
	The \textit{injective tensor product} of $X$ and $Y$, denoted by $X\hat{\otimes}_\eps Y$, is the completion of $X\otimes Y$ endowed with the norm given by
	$$
	\|u\|_\eps=\sup\left\{ \left|\sum_{i=1}^n x^*(x_i)y^*(y_i)\right|\colon x^*\in B_{X^*},\, y^*\in B_{Y^*} \right\},
	$$
	where $\sum_{i=1}^n x_i\otimes y_i$ is any representation of $u$. Since $B_{(X\hat{\otimes}_\eps Y)^*}= \overline{\conv}^{w^*}(B_{X^*}\otimes B_{Y^*})$, we obtain the following result as consequence of Corollaries~\ref{cor:B-J-orth-Z-C} and \ref{cor:characterization-smooth}.
	
	\begin{proposition}\label{prop:B-J-orth-injective-tensor}
		Let $X$, $Y$ be Banach spaces.
		\begin{enumerate}[(a)]
			\item $u,z\in X\hat{\otimes}_\eps Y$ satisfy $u\perp_B z$ if and only if
			$$0\in\conv\bigr(\bigl\{\lim (x_n^*\otimes y_n^*)(z)\colon x_n^*\otimes y_n^*\in B_{X^*}\otimes B_{Y^*} \, \forall n\in\N, \ \lim(x_n^*\otimes y_n^*)(u)=\|u\|_\eps \bigr\}\bigl).
			$$
			\item $u\in X\hat{\otimes}_\eps Y$ is smooth if and only if the set
			$$\bigl\{\lim (x_n^*\otimes y_n^*)(z)\colon x_n^*\otimes y_n^*\in B_{X^*}\otimes B_{Y^*} \, \forall n\in\N, \ \lim(x_n^*\otimes y_n^*)(u)=\|u\|_\eps \bigr\}$$
			is a singleton for every $z\in X\hat{\otimes}_\eps Y$.
		\end{enumerate}
	\end{proposition}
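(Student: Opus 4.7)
The plan is to obtain both parts of Proposition~\ref{prop:B-J-orth-injective-tensor} as direct applications of Corollary~\ref{cor:B-J-orth-Z-C} and Corollary~\ref{cor:characterization-smooth} with the choice
\[
C:=B_{X^*}\otimes B_{Y^*}\subseteq B_{(X\hat{\otimes}_\eps Y)^*},
\]
where $x^*\otimes y^*$ is identified with the bounded linear functional on $X\hat{\otimes}_\eps Y$ sending an elementary tensor $x\otimes y$ to $x^*(x)y^*(y)$.

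The one ingredient that needs to be verified before applying the corollaries is the equality $B_{(X\hat{\otimes}_\eps Y)^*}=\overline{\conv}^{w^*}(B_{X^*}\otimes B_{Y^*})$ that the authors state just before the proposition. This is a routine consequence of the bipolar theorem: the set $B_{X^*}\otimes B_{Y^*}$ is absolutely convex-generating (in fact, $\lambda(x^*\otimes y^*)=(\lambda x^*)\otimes y^*$ belongs to it for every $|\lambda|\leq 1$), and by the very definition of $\|\cdot\|_\eps$ recalled in the excerpt, its polar in $X\hat{\otimes}_\eps Y$ is exactly $B_{X\hat{\otimes}_\eps Y}$; the bipolar theorem then yields the displayed weak-star equality.

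With that in hand, part (a) is immediate. If $u=0$ then both sides hold trivially (the zero functional lies in $B_{X^*}\otimes B_{Y^*}$). If $u\neq 0$, set $\tilde{u}:=u/\|u\|_\eps\in S_{X\hat{\otimes}_\eps Y}$; by the homogeneity of $\perp_B$ we have $u\perp_B z\Leftrightarrow \tilde{u}\perp_B z$, and Corollary~\ref{cor:B-J-orth-Z-C}(iii) applied to $\tilde{u}$ with the set $C$ above states precisely that this is equivalent to
\[
0\in\conv\bigl(\bigl\{\lim(x_n^*\otimes y_n^*)(z)\colon x_n^*\otimes y_n^*\in B_{X^*}\otimes B_{Y^*},\ \lim(x_n^*\otimes y_n^*)(\tilde{u})=1\bigr\}\bigr),
\]
and the normalization condition $\lim(x_n^*\otimes y_n^*)(\tilde{u})=1$ is the same as $\lim(x_n^*\otimes y_n^*)(u)=\|u\|_\eps$ after multiplying through by $\|u\|_\eps$ (which also merely rescales the limits appearing inside the convex hull, preserving membership of $0$). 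Part (b) is proved in exactly the same way by invoking Corollary~\ref{cor:characterization-smooth}(ii) in place of Corollary~\ref{cor:B-J-orth-Z-C}(iii), again after reducing to the unit sphere via the rescaling $u\mapsto u/\|u\|_\eps$ (for $u\neq 0$; the smoothness question is vacuous at $u=0$).

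I do not anticipate any genuine obstacle: the work has already been done in the abstract framework of Section~\ref{sect:B-J-orth-smooth-num-range}, and the only non-bookkeeping step is the bipolar identification of the dual ball, which is standard for the injective tensor norm. The proof is therefore essentially a check that the hypotheses of the two corollaries are met and a translation of their conclusions into the notation of tensor products.
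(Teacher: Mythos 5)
Your proposal is correct and follows exactly the paper's route: the paper likewise takes $C=B_{X^*}\otimes B_{Y^*}$, uses the (standard) identity $B_{(X\hat{\otimes}_\eps Y)^*}=\overline{\conv}^{w^*}(B_{X^*}\otimes B_{Y^*})$, and applies Corollaries~\ref{cor:B-J-orth-Z-C} and~\ref{cor:characterization-smooth}. Your extra details (the bipolar verification and the normalization $u\mapsto u/\|u\|_\eps$) are just the routine checks the paper leaves implicit.
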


	\subsection{Spaces of operators endowed with the operator norm}\label{subsection:operators}
	Let $X$, $Y$ be Banach spaces. Consider $C\subset S_X$ such that $\conv(C)$ is dense in $B_X$ and $D\subset S_{Y^*}$ such that $\conv(D)$ is weak-star dense in $B_{Y^*}$. Our general characterization of BJ-orthogonality  in $\mathcal{L}(X,Y)$ endowed with the usual norm is obtained by using Corollary~\ref{cor:B-J-orth-Z-C} with
	$$\mathfrak{C}:=\{y^*\otimes x \colon x\in C, \ y^*\in D\}
	$$
	where $y^*\otimes x\in\mathcal{L}(X,Y)^*$ is defined by
	$$
	[y^*\otimes x](T):=y^*(Tx) \qquad (T\in\mathcal{L}(X,Y)).
	$$
	For $C=S_X$ and $D=S_{Y^*}$, the result already appeared in \cite[Theorem~2.2]{Roypre2022}, with a different proof.
	
	\begin{proposition}[\mbox{Extension of \cite[Theorem~2.2]{Roypre2022}}]\label{prop:B-J-orth-operator-norm}
		Let $X$, $Y$ be Banach spaces, $C\subset S_X$ such that $\conv(C)$ is dense in $B_X$ and $D\subset S_{Y^*}$ such that $\conv(D)$ is weak-star dense in $B_{Y^*}$, and let $T,A\in\mathcal{L}(X,Y)$. Then,
		$$
		T\perp_B A \Longleftrightarrow 0\in \conv\left(\bigl\{\lim y_n^*(Ax_n)\colon (x_n,y_n^*)\in C \times D \ \forall n\in\N, \ \lim y_n^*(Tx_n)=\|T\|\bigr\}\right).
		$$
	\end{proposition}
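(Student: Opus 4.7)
The plan is to apply Corollary~\ref{cor:B-J-orth-Z-C} in $Z=\mathcal{L}(X,Y)$ with $u=T/\|T\|$ (the case $T=0$ being trivial on both sides) and the subset
$$
\mathfrak{C}:=\bigl\{y^*\otimes x\colon x\in C,\ y^*\in D\bigr\}\subset\mathcal{L}(X,Y)^*,
$$
where $[y^*\otimes x](S):=y^*(Sx)$ for $S\in\mathcal{L}(X,Y)$. Once the hypotheses of that corollary are checked for $\mathfrak{C}$, unpacking $\phi_n=y_n^*\otimes x_n$ in the resulting characterization yields the statement, since $\phi_n(A)=y_n^*(Ax_n)$ and $\lim\phi_n(T/\|T\|)=1$ is equivalent to $\lim y_n^*(Tx_n)=\|T\|$, while $T\perp_B A \Leftrightarrow (T/\|T\|)\perp_B A$ by homogeneity.

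First, I would verify $\mathfrak{C}\subset S_{\mathcal{L}(X,Y)^*}$: the bound $\|y^*\otimes x\|\leq 1$ is clear from $|y^*(Sx)|\leq\|S\|$, and for the reverse inequality it suffices, given $\varepsilon>0$, to pick $x^*\in S_{X^*}$ with $x^*(x)=1$ (Hahn--Banach) and $y\in S_Y$ with $|y^*(y)|>1-\varepsilon$, then to observe that the rank-one operator $S\in\mathcal{L}(X,Y)$ defined by $S(\xi):=x^*(\xi)\,y$ has norm one and satisfies $|[y^*\otimes x](S)|=|y^*(y)|>1-\varepsilon$.

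The core nontrivial step is to show $B_{\mathcal{L}(X,Y)^*}=\overline{\conv}^{w^*}(\mathfrak{C})$, which by the Hahn--Banach separation theorem reduces to verifying
$$
\sup_{(x,y^*)\in C\times D}\re y^*(Tx)=\|T\|\qquad\text{for every }T\in\mathcal{L}(X,Y).
$$
For fixed $y^*\in D$, the map $x\mapsto\re y^*(Tx)$ is a continuous real-linear functional on $X$, so since $\conv(C)$ is dense in $B_X$ we get $\sup_{x\in C}\re y^*(Tx)=\sup_{x\in B_X}\re y^*(Tx)=\|y^*\circ T\|$, using the standard identity that $\sup_{x\in B_X}\re\varphi(x)=\|\varphi\|$ for any $\varphi\in X^*$. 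Taking the supremum over $y^*\in D$ and invoking that $D$ is one-norming for $Y$,
$$
\sup_{y^*\in D}\|y^*\circ T\|=\sup_{x\in B_X}\sup_{y^*\in D}|y^*(Tx)|=\sup_{x\in B_X}\|Tx\|=\|T\|.
$$

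The genuinely delicate point is this verification of \emph{full} (not merely absolutely) convex weak-star density of $\mathfrak{C}$ in $B_{\mathcal{L}(X,Y)^*}$: no $\T$-symmetrization in the $D$ variable is needed. What makes this work is precisely that the hypothesis requires density of $\conv(C)$ in the \emph{entire ball} $B_X$ (and not only that $C$ be a norming subset of $S_X$): this lets us absorb any modulus-one phase into the $x$-variable by rotation inside $B_X\supseteq\overline{\conv(C)}$, converting the one-norming property of $D$ into the required real-part supremum equality and thereby avoiding a detour through Corollary~\ref{cor:B-J-orth-Z-Lambda} and the extra $\theta_0$ that a one-norming argument would introduce.
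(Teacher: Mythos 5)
Your proof is correct and takes essentially the same route as the paper, which likewise obtains the proposition by applying Corollary~\ref{cor:B-J-orth-Z-C} to $\mathfrak{C}=\{y^*\otimes x\colon x\in C,\ y^*\in D\}$; the verification you supply via Hahn--Banach separation that $\sup_{(x,y^*)\in C\times D}\re y^*(Tx)=\|T\|$, hence $B_{\mathcal{L}(X,Y)^*}=\overline{\conv}^{w^*}(\mathfrak{C})$, is exactly the step the paper leaves implicit. The only tiny slip is calling the case $T=0$ ``trivial on both sides''---the right-hand side there is not literally trivial, though it does follow from the same density argument (or one simply reads the statement for $T\neq 0$)---and this does not affect the argument.
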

	
	Observe that the result also follows from Theorem~\ref{theorem:ellinftygammaY} as $\mathcal{L}(X,Y)$ can be viewed as a closed subspace of $\ell_\infty(C,Y)$.
	
	When the operators involved are compact we can remove the limits in Proposition~\ref{prop:B-J-orth-operator-norm} and also we can use extreme points of $B_{X^{**}}$ and of $B_{Y^*}$. For $y^*\in Y^*$ and $x^{**}\in X^{**}$, we consider  $[x^{**}\otimes y^*](T):=x^{**}(T^*y^*)$ for every $T\in \mathcal{K}(X,Y)$.
	
	\begin{proposition}\label{prop:compact-operators-general-extreme}
		Let $X$, $Y$ be Banach spaces, and let $T,A\in\mathcal{K}(X,Y)$. Then,
		$$
		T\perp_B A \Longleftrightarrow 0\in \conv\left(\bigl\{x^{**}(A^*(y^*))\colon x^{**}\in \ext(B_{X^{**}}), \ y^*\in \ext(B_{Y^*}), \ x^{**}(T^{*}(y^*))=\|T\|\bigr\}\right).
		$$
	\end{proposition}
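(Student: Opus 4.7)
My plan is to apply Singer's characterization (Fact~\ref{fact:Singer}) directly to the Banach space $Z=\mathcal{K}(X,Y)$, taking $u=T$ (we may assume $T\neq 0$, otherwise the statement is trivial) and $z=A$. Singer's fact immediately yields
\[
T\perp_B A \Longleftrightarrow 0\in\conv\bigl\{\phi(A)\colon \phi\in\ext(B_{\mathcal{K}(X,Y)^*}),\ \phi(T)=\|T\|\bigr\}.
\]
The crucial input is then the description of extreme points of $B_{\mathcal{K}(X,Y)^*}$ already quoted in the introduction: the Ruess--Stegall identification $\ext(B_{\mathcal{K}(X,Y)^*})=\ext(B_{X^{**}})\otimes\ext(B_{Y^*})$, combined with the pairing $[x^{**}\otimes y^*](S)=x^{**}(S^*(y^*))$ recalled just above the proposition (well-defined because $S^*\colon Y^*\to X^*$ is compact whenever $S$ is, so $S^*(y^*)\in X^*$ is a legitimate argument for $x^{**}\in X^{**}$).

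Substituting this description into the displayed equivalence translates the selector condition $\phi(T)=\|T\|$ into $x^{**}(T^*(y^*))=\|T\|$, and the evaluated values $\phi(A)$ into $x^{**}(A^*(y^*))$. This reproduces the statement of the proposition verbatim, so essentially no further computation is required.

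The one point worth a moment's care is the complex-case bookkeeping: Singer's condition $\phi(u)=\|u\|$ demands literal equality with the nonnegative real number $\|u\|$, not merely equality in modulus. However, since $\ext(B_{X^{**}})$ and $\ext(B_{Y^*})$ are each invariant under multiplication by $\T$, the family of tensors $\{x^{**}\otimes y^*\colon x^{**}\in\ext(B_{X^{**}}),\, y^*\in\ext(B_{Y^*})\}$ inherits this $\T$-invariance, so every orbit of extreme points contains a representative with $x^{**}(T^*(y^*))=\|T\|$ whenever the modulus attains $\|T\|$, and the convex hull in the conclusion is unaffected. Overall, the main potential obstacle---having a workable description of $\ext(B_{\mathcal{K}(X,Y)^*})$ in terms of data on $X$ and $Y$---is exactly what the Ruess--Stegall theorem supplies for free, so the proof reduces to unpacking the definitions rather than any genuine calculation.
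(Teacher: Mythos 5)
Your argument is correct and coincides with the paper's own proof: both apply Fact~\ref{fact:Singer} to $Z=\mathcal{K}(X,Y)$ and then substitute the Ruess--Stegall description $\ext(B_{\mathcal{K}(X,Y)^*})=\{x^{**}\otimes y^*\colon x^{**}\in\ext(B_{X^{**}}),\, y^*\in\ext(B_{Y^*})\}$ with the pairing $[x^{**}\otimes y^*](S)=x^{**}(S^*y^*)$. Your extra remark on the complex case is harmless but unnecessary, since Singer's statement already selects exactly those extreme functionals with $\phi(T)=\|T\|$ and that set of tensors is $\T$-invariant.
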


	The proof of this result follows from Fact~\ref{fact:Singer} as the set $$C=\{x^{**}\otimes y^* \colon x^{**}\in \ext(B_{X^{**}}), \ y^*\in \ext(B_{Y^*})\}$$ coincides with the set of extreme points of the unit ball of $\mathcal{K}(X,Y)^*$ (see \cite[Theorem 1.3]{Ruess-Stegall-MathAnn1982} for the real case and \cite[Theorem 1]{LimaOlsen} for the complex case). In the case when $X$ is reflexive, the above result has a nicer form. Let us remark here that a special case of the following result was obtained in Theorem $ 2.1 $ of \cite{SainPaulMal-JOT2018}, where $ X $ is assumed to be a real reflexive Banach space.
	
	\begin{corollary}\label{B-J-orth-operator-norm-compact}
		Let $X$ be a reflexive Banach space, let $Y$ be a Banach space, and let $T,A\in\mathcal{K}(X,Y)$. Then,
		$$
		T\perp_B A \Longleftrightarrow 0\in \conv\left(\bigl\{y^*(Ax)\colon x\in \ext(B_X), \ y^*\in \ext(B_{Y^*}), \ y^*(Tx)=\|T\|\bigr\}\right).
		$$
	\end{corollary}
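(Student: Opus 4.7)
The plan is to deduce this corollary directly from Proposition~\ref{prop:compact-operators-general-extreme} by using the reflexivity of $X$ to pass from bidual points to points of $X$ itself. First I would recall the canonical isometric embedding $J_X\colon X\longrightarrow X^{**}$, which is surjective precisely because $X$ is reflexive, and observe that since $J_X$ is a surjective linear isometry it maps $B_X$ onto $B_{X^{**}}$ and, consequently, $\ext(B_X)$ onto $\ext(B_{X^{**}})$.

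Next, I would translate the scalar expressions appearing in Proposition~\ref{prop:compact-operators-general-extreme} via this identification: for any $x\in X$ and any $S\in\mathcal{L}(X,Y)$,
\[
J_X(x)(S^*y^*)=(S^*y^*)(x)=y^*(Sx),
\]
so the conditions $x^{**}(T^*y^*)=\|T\|$ and $x^{**}(A^*y^*)=y^*(Ax)$ become $y^*(Tx)=\|T\|$ and $y^*(Ax)$ respectively, where $x^{**}=J_X(x)$. Feeding this into Proposition~\ref{prop:compact-operators-general-extreme} yields the desired equivalence. No further work is required beyond this bookkeeping, since the whole content — the extreme-point reduction for the dual ball of $\mathcal{K}(X,Y)$ via Ruess--Stegall and Fact~\ref{fact:Singer} — has already been absorbed into the previous proposition.

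I do not expect any genuine obstacle here; the only point that deserves a line in the written proof is the preservation of extreme points under the canonical isometry, which is immediate from the definition of an extreme point together with the linearity of $J_X$. In particular, there is no need to revisit compactness of $T$ or $A$, nor to re-examine the description of $\ext(B_{\mathcal{K}(X,Y)^*})$, both of which have already done their work in Proposition~\ref{prop:compact-operators-general-extreme}.
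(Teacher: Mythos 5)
Your proposal is correct and follows exactly the route the paper intends: the corollary is stated there as an immediate consequence of Proposition~\ref{prop:compact-operators-general-extreme}, with reflexivity used to identify $\ext(B_{X^{**}})$ with $J_X(\ext(B_X))$ and to rewrite $x^{**}(S^*y^*)$ as $y^*(Sx)$. The bookkeeping you describe (preservation of extreme points under the surjective canonical isometry) is all that is needed, so there is nothing to add.
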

	
	Of course, the previous result applies when $X$ is finite-dimensional.
	
	\begin{corollary}[\mbox{\cite[Proposition~4.2]{LiSchneider}}]\label{B-J-orth-operator-norm-finite}
		Let $X$ be a finite-dimensional space, let $Y$ be a Banach space, and let $T,A\in\mathcal{L}(X,Y)$. Then
		$$
		T\perp_B A \Longleftrightarrow 0\in \conv\left(\bigl\{y^*(Ax)\colon x\in \ext(B_X), \ y^*\in \ext(B_{Y^*}), \ y^*(Tx)=\|T\|\bigr\}\right).
		$$
	\end{corollary}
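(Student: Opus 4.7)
The plan is to deduce this from Corollary~\ref{B-J-orth-operator-norm-compact} by observing that finite-dimensionality gives us two things for free: reflexivity and automatic compactness of bounded operators.

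First I would note that every finite-dimensional Banach space $X$ is reflexive, so the hypothesis on $X$ in Corollary~\ref{B-J-orth-operator-norm-compact} is satisfied. Next, since $\dim X < \infty$, the closed unit ball $B_X$ is compact; hence for any $T \in \mathcal{L}(X,Y)$, the image $T(B_X)$ is a continuous image of a compact set, therefore relatively compact in $Y$. This shows $\mathcal{L}(X,Y) = \mathcal{K}(X,Y)$, so in particular both operators $T$ and $A$ appearing in the statement are compact.

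With both hypotheses of Corollary~\ref{B-J-orth-operator-norm-compact} in place, the characterization given there applies verbatim to $T$ and $A$, which is precisely the statement we want. Also, when $X$ is finite-dimensional, $\ext(B_X)$ is non-empty (by the Krein-Milman Theorem, or simply because $B_X$ is a compact convex set), so the set on the right-hand side is meaningful.

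The only conceivable subtlety is ensuring that no auxiliary assumption hidden in Corollary~\ref{B-J-orth-operator-norm-compact} (beyond reflexivity of $X$ and compactness of the operators) is being invoked; a quick check of the proof of that corollary through Proposition~\ref{prop:compact-operators-general-extreme} and Fact~\ref{fact:Singer} confirms that the characterization is purely in terms of $\ext(B_X)$ via the identification $X^{**}=X$ and the description of $\ext(B_{\mathcal{K}(X,Y)^*})$ from \cite{Ruess-Stegall-MathAnn1982}. Thus there is no real obstacle: the corollary is essentially a direct specialization of the previous result.
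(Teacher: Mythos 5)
Your proposal is correct and matches the paper's approach exactly: the paper dispatches this corollary with the single remark that the previous result (Corollary~\ref{B-J-orth-operator-norm-compact}) applies when $X$ is finite-dimensional, precisely because such an $X$ is reflexive and every bounded operator from it is compact. Your additional checks (non-emptiness of $\ext(B_X)$, tracing the compact-case proof back to Fact~\ref{fact:Singer}) are fine but not needed beyond that observation.
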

	
	We finish this subsection on the operator norm by presenting a characterization of smooth operators which follows directly from Corollary~\ref{cor:characterization-smooth}.
	
	\begin{proposition}\label{prop:characterization-smooth-operator}
		Let $X$, $Y$ be Banach spaces, $C\subset S_X$ such that $\conv(C)$ is dense in $B_X$ and $D\subset S_{Y^*}$ such that $\conv(D)$ is weak-star dense in $B_{Y^*}$, and let $0\neq T\in\mathcal{L}(X,Y)$. Then, $T$ is a smooth operator if and only if
		$$
		\bigl\{\lim y_n^*(Ax_n)\colon (x_n,y_n^*)\in C \times D \ \forall n\in\N, \ \lim y_n^*(Tx_n)=\|T\|\bigr\}
		$$
		is a singleton set for every $A\in\mathcal{L}(X,Y)$.
	\end{proposition}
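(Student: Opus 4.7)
The plan is to apply Corollary~\ref{cor:characterization-smooth} to $Z=\mathcal{L}(X,Y)$ together with the same subset $\mathfrak{C}$ used to derive Proposition~\ref{prop:B-J-orth-operator-norm}. Since smoothness at $T$ coincides with smoothness at $T/\|T\|$, I would first normalize and assume $\|T\|=1$, so that we may take $u=T\in S_{\mathcal{L}(X,Y)}$. Then I would consider
\[
\mathfrak{C}:=\{y^*\otimes x : x\in C,\ y^*\in D\}\ \subset\ B_{\mathcal{L}(X,Y)^*},
\]
where $[y^*\otimes x](A):=y^*(Ax)$ for every $A\in\mathcal{L}(X,Y)$.

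The crucial identification is that the two standing hypotheses — $\conv(C)$ dense in $B_X$ and $D$ one-norming for $Y$ — yield $B_{\mathcal{L}(X,Y)^*}=\overline{\conv}^{w^*}(\mathfrak{C})$. This is exactly the observation that the authors have already used in going from Corollary~\ref{cor:B-J-orth-Z-C} to Proposition~\ref{prop:B-J-orth-operator-norm}, so I would invoke it rather than reprove it. With that in hand, Corollary~\ref{cor:characterization-smooth} yields: $T$ is smooth if and only if, for every $A\in\mathcal{L}(X,Y)$, the set
\[
\bigl\{\lim \psi_n(A):\psi_n\in \mathfrak{C},\ \lim\psi_n(T)=1\bigr\}
\]
is a singleton.

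Substituting $\psi_n=y_n^*\otimes x_n$ gives $\psi_n(A)=y_n^*(Ax_n)$ and $\psi_n(T)=y_n^*(Tx_n)$, and then un-normalizing restores $\lim y_n^*(Tx_n)=\|T\|$ in place of $1$. The resulting set is exactly the one appearing in the statement of the proposition. The only point requiring any genuine verification is the weak-star closed convex hull identity for $\mathfrak{C}$, which is a routine consequence of the density of $\conv(C)$ in $B_X$ combined with the one-norming property of $D$; everything else is direct substitution.
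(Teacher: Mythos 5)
Your proposal is correct and follows essentially the same route as the paper: the authors also derive this proposition directly from Corollary~\ref{cor:characterization-smooth} applied to $Z=\mathcal{L}(X,Y)$ with the same set $\mathfrak{C}=\{y^*\otimes x\colon x\in C,\ y^*\in D\}$, whose weak-star closed convex hull is $B_{\mathcal{L}(X,Y)^*}$ exactly as already used for Proposition~\ref{prop:B-J-orth-operator-norm}. Your normalization to $\|T\|=1$ and the substitution $\psi_n=y_n^*\otimes x_n$ are precisely the (implicit) steps in the paper's one-line argument.
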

	
	As an easy consequence of this proposition and   Corollary~\ref{cor:characterization-smooth}, we obtain a result that gives the existence of smooth operators under reasonable restrictions. In fact, they are quite similar to those used by Heinrich in \cite[Theorem~3.1]{Heinrich} to characterize Fr\'{e}chet smooth operators in $\mathcal{L}(X,Y)$ (but there is no characterization of smoothness of operators outside $\mathcal{K}(X,Y)$ in \cite{Heinrich}). The result extends \cite[Theorem~3.4]{SainPaulMalRay} to the complex case. It will be used in Subsection~\ref{subsec:obstructive-spearoperators}.
	
	\begin{proposition}\label{prop:suff-condition-smooth-operator}
		Let $X$, $Y$ be Banach spaces. Let $0\neq T\in \mathcal{L}(X,Y)$ be such that there is $x_0\in S_X$ satisfying the
		following conditions:
		\begin{enumerate}
			\item[(1)] $T x_0$ is a smooth point in $Y$;
			\item[(2)] every sequence $\{x_n\}\subset B_X$ satisfying $\lim \|Tx_n\|=\|T\|$  has a subsequence converging to $\alpha x_0$ for some $\alpha\in \T$.
		\end{enumerate}
		Then, $T$ is smooth.
	\end{proposition}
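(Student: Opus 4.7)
\textbf{Proof plan for Proposition~\ref{prop:suff-condition-smooth-operator}.} My plan is to apply Proposition~\ref{prop:characterization-smooth-operator} with the choices $C=S_X$ and $D=S_{Y^*}$, so the task reduces to showing that for every $A\in \mathcal{L}(X,Y)$, any pair of sequences $\{x_n\}\subset S_X$, $\{y_n^*\}\subset S_{Y^*}$ with $\lim y_n^*(Tx_n)=\|T\|$ produces only one possible adherent value for $y_n^*(Ax_n)$. Let $\phi_0\in S_{Y^*}$ denote the unique norm-one functional supporting $Tx_0$, whose existence and uniqueness are guaranteed by hypothesis~(1) (note $\|Tx_0\|=\|T\|$ is forced by hypothesis~(2) applied to the constant sequence $x_n=x_0$, or by combining (2) with any norming sequence). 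The target value will be $\phi_0(Ax_0)$.

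Fix $A\in\mathcal{L}(X,Y)$ and sequences as above, and let $\lambda$ be any adherent point of $\{y_n^*(Ax_n)\}$. Passing to a subsequence I may assume $y_n^*(Ax_n)\to \lambda$ while still $y_n^*(Tx_n)\to \|T\|$. Since $|y_n^*(Tx_n)|\leq \|Tx_n\|\leq \|T\|$, this forces $\|Tx_n\|\to \|T\|$, so hypothesis~(2) supplies a further subsequence (kept with the same indexing) and some $\alpha\in\T$ with $x_n\to \alpha x_0$, hence $Tx_n\to \alpha Tx_0$ and $Ax_n\to \alpha Ax_0$ in norm.

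The central step is to establish that $y_n^*\xrightarrow{w^*}\bar\alpha\phi_0$. From $\|Tx_n-\alpha Tx_0\|\to 0$ and $\|y_n^*\|\leq 1$ I get $y_n^*(\alpha Tx_0)=y_n^*(Tx_n)+y_n^*(\alpha Tx_0-Tx_n)\to \|T\|$, i.e., $y_n^*(Tx_0)\to \bar\alpha \|Tx_0\|$. By $w^*$-compactness of $B_{Y^*}$, any $w^*$-limit point $y^*\in B_{Y^*}$ of $\{y_n^*\}$ satisfies $y^*(Tx_0)=\bar\alpha \|Tx_0\|$, so $(\alpha y^*)(Tx_0)=\|Tx_0\|$ forces $\|\alpha y^*\|=1$; by the smoothness hypothesis~(1), the only such functional is $\phi_0$, whence $y^*=\bar\alpha\phi_0$. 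Since every $w^*$-limit point coincides with $\bar\alpha\phi_0$, the whole sequence $w^*$-converges to $\bar\alpha\phi_0$.

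Finally, I split
\begin{equation*}
y_n^*(Ax_n)=y_n^*(Ax_n-\alpha Ax_0)+\alpha \, y_n^*(Ax_0);
\end{equation*}
the first term tends to $0$ by the norm convergence above, while the second term tends to $\alpha\cdot\bar\alpha\,\phi_0(Ax_0)=\phi_0(Ax_0)$ by the just-established $w^*$-convergence. Therefore $\lambda=\phi_0(Ax_0)$, independently of the original choice of sequences and of the particular $\alpha$ extracted along the way. By Proposition~\ref{prop:characterization-smooth-operator}, this proves that $T$ is smooth. The only potentially delicate step is the passage from subsequential $w^*$-limit points to full $w^*$-convergence, but that is exactly where smoothness of $Tx_0$ enters to pin down a unique limit.
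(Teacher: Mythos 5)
Your proof is correct and follows essentially the same route as the paper: reduce via Proposition~\ref{prop:characterization-smooth-operator}, use hypothesis (2) to pass to a subsequence with $x_n\to\alpha x_0$, transfer the functionals to $x_0$ by norm convergence, and let smoothness of $Tx_0$ pin down a unique limit --- the paper simply finishes by citing Corollary~\ref{cor:characterization-smooth} at $Tx_0$ instead of identifying the limit $\phi_0(Ax_0)$ by hand via $w^*$-convergence of $\{y_n^*\}$. One small remark: your parenthetical that hypothesis (2) applied to the constant sequence $x_n=x_0$ yields $\|Tx_0\|=\|T\|$ is circular, but the alternative you give (apply (2) to a norming sequence) is the correct justification, so nothing is lost.
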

		
	\begin{proof}
		Using Proposition~\ref{prop:characterization-smooth-operator} it suffices to show that, for every $A\in \mathcal{L}(X,Y)$, the set
		$$
		\bigl\{\lim y_n^*(Ax_n)\colon (x_n,y_n^*)\in S_X \times S_{Y^*} \ \forall n\in\N, \ \lim y_n^*(Tx_n)=\|T\|\bigr\}
		$$
		is a singleton. To do so, fix an arbitrary $\lambda=\lim y_n^*(Ax_n)$ and observe that $\lim y_n^*(Tx_n)=\|T\|$ implies $\lim\|Tx_n\|=\|T\|$. So, using (2), there are $\alpha\in \T$ and a subsequence $\{x_{\sigma(n)}\}$ with $\lim x_{\sigma(n)}=\alpha x_0$. Now, it is clear that
		$$
		\lim(\alpha y_{\sigma(n)}^*)(Tx_0)=\lim y_{\sigma(n)}^*(Tx_{\sigma(n)})=\lim y_n^*(Tx_n)=\|T\|=\|Tx_0\|
		$$
		and
		$$
		\lim(\alpha y_{\sigma(n)}^*)(Ax_0)=\lim y_{\sigma(n)}^*(Ax_{\sigma(n)})=\lambda.
		$$
		Therefore, we get that
		$$
		\lambda\in \bigl\{\lim z_n^*(Ax_0)\colon z_n^*\in S_{Y^*} \ \forall n\in\N, \ \lim z_n^*(Tx_0)=\|Tx_0\|\bigr\}
		$$
		and the latter set is a singleton by Corollary~\ref{cor:characterization-smooth} as $Tx_0$ is a smooth point of $Y$ by (1).
	\end{proof}

	\subsection{Multilinear maps and polynomials}\label{subsection:multilinear-polynomials}
	In an analogous way that we deal with bounded operators, it is possible to describe the BJ-orthogonality of multilinear maps and polynomials.
	
	Let $X_1, \ldots, X_k$ and $Y$ be Banach spaces. The set of all bounded $k$-linear maps from $X_1 \times \cdots \times X_k$ to $Y$ will be denoted by $\mathcal{L}(X_1, \ldots,X_k;Y)$. As usual, we define the norm of $A \in \mathcal{L}(X_1, \ldots, X_k; Y)$ by
	\[
	\|A\| = \sup \bigl\{ \|A(x_1, \ldots, x_k) \| \colon (x_1, \ldots, x_k) \in S_{X_1} \times \cdots \times S_{X_k} \bigr\}.
	\]
	It is then immediate that $$\mathcal{L}(X_1, \ldots,X_k;Y)\subset \ell_\infty(\Gamma,Y)$$ where $\Gamma=S_{X_1} \times \cdots \times S_{X_k}$. Therefore, the following result follows immediately from Theorem~\ref{theorem:ellinftygammaY} and Corollary~\ref{coro:ellinftygammaY-smooth}. It was proved in \cite{Roypre2022}.
	
	\begin{proposition}[\mbox{\cite[Theorem~2.2 and Theorem~3.1]{Roypre2022}}]
		Let $X_1, \ldots, X_k$ and $Y$ be Banach spaces and let $C\subset S_{Y^*}$ be such that $B_{Y^*}=\overline{\conv}^{w^*}(C)$.
		\begin{enumerate}[(a)]
			\item For $T,A\in \mathcal{L}(X_1, \ldots,X_k;Y)$ we have that $T\perp_B A$ if and only if $0$ belongs to the convex hull of
			$$
			\left\{\lim_n y_n^*(A(x^n_1,\ldots,x^n_k))\colon (x^n_1, \ldots, x^n_k) \in S_{X_1} \times \cdots \times S_{X_k},\, y_n^*\in C,\,  \lim_n y_n^*(T(x^n_1,\ldots,x^n_k))=\|T\|\right\}.
			$$
			\item $T\in \mathcal{L}(X_1, \ldots,X_k;Y)$ is a smooth point if and only if the set
			$$
			\left\{\lim_n y_n^*(A(x^n_1,\ldots,x^n_k))\colon (x^n_1, \ldots,x^n_k) \in S_{X_1} \times \cdots \times S_{X_k},\, y_n^*\in C,\,  \lim_n y_n^*(T(x^n_1,\ldots,x^n_k))=\|T\|\right\}
			$$
			is a singleton for every $A\in \mathcal{L}(X_1, \ldots,X_k;Y)$.
		\end{enumerate}
	\end{proposition}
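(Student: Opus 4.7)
The plan is to realize $\mathcal{L}(X_1,\ldots,X_k;Y)$ as a closed subspace of $\ell_\infty(\Gamma,Y)$ for $\Gamma=S_{X_1}\times\cdots\times S_{X_k}$, and then invoke Theorem~\ref{theorem:ellinftygammaY} for part (a) and Corollary~\ref{coro:ellinftygammaY-smooth} for part (b); the proposition statement is indeed a literal translation of those two results through this embedding, as already anticipated in the paragraph preceding the statement.

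First, I would define the restriction map $\Phi\colon \mathcal{L}(X_1,\ldots,X_k;Y)\longrightarrow \ell_\infty(\Gamma,Y)$ by
$$
\Phi(T)(x_1,\ldots,x_k):=T(x_1,\ldots,x_k) \qquad \bigl((x_1,\ldots,x_k)\in\Gamma\bigr).
$$
Linearity is clear and the defining formula of the $k$-linear norm gives $\|\Phi(T)\|_\infty=\|T\|$, so $\Phi$ is a linear isometry. Since $\mathcal{L}(X_1,\ldots,X_k;Y)$ is a Banach space, its image $\mathcal{Z}:=\Phi\bigl(\mathcal{L}(X_1,\ldots,X_k;Y)\bigr)$ is automatically a closed subspace of $\ell_\infty(\Gamma,Y)$. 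Because BJ-orthogonality and smoothness of a point in a Banach space depend only on the isometric structure (in fact, on the two-dimensional subspace spanned by the vectors involved), it is enough to prove the statements with $T,A$ replaced by $\Phi(T),\Phi(A)\in\mathcal{Z}\leq\ell_\infty(\Gamma,Y)$.

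For part (a), I apply Theorem~\ref{theorem:ellinftygammaY} to the pair $\Phi(T),\Phi(A)\in \ell_\infty(\Gamma,Y)$ with the set $C\subset S_{Y^*}$ given in the hypothesis, observing (see the paragraph right after that theorem) that the same characterization passes to any closed subspace. The resulting index $\gamma_n\in\Gamma$ is exactly a tuple $(x_1^n,\ldots,x_k^n)\in S_{X_1}\times\cdots\times S_{X_k}$, and $\delta_{\gamma_n}\Phi(T)=T(x_1^n,\ldots,x_k^n)$, $\delta_{\gamma_n}\Phi(A)=A(x_1^n,\ldots,x_k^n)$, so the characterization reads precisely as stated. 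Part (b) follows identically from Corollary~\ref{coro:ellinftygammaY-smooth} applied to the closed subspace $\mathcal{Z}$.

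I do not foresee any serious obstacle: the only content beyond the quoted previous results is the isometric embedding, which is routine. If there is a mild point worth double-checking, it is that the characterization from Theorem~\ref{theorem:ellinftygammaY} transfers to closed subspaces; but this is exactly the remark made in the paper right after that theorem and uses only that $T\perp_B A$ is a property of $\mathrm{span}\{T,A\}$ as a normed two-dimensional space, which is identical whether computed inside $\mathcal{Z}$ or inside $\ell_\infty(\Gamma,Y)$.
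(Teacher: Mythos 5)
Your proposal is correct and follows exactly the paper's argument: the paper also observes that $\mathcal{L}(X_1,\ldots,X_k;Y)$ embeds isometrically into $\ell_\infty(\Gamma,Y)$ with $\Gamma=S_{X_1}\times\cdots\times S_{X_k}$ and deduces both parts immediately from Theorem~\ref{theorem:ellinftygammaY} and Corollary~\ref{coro:ellinftygammaY-smooth} (the latter being stated for closed subspaces precisely so that smoothness transfers). Your write-up merely makes explicit the routine details the paper leaves implicit.
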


	We now deal with polynomials between Banach spaces. Let $X$ and $Y$ be Banach spaces. A (continuous) \emph{$N$-homogeneous polynomial} $P$ from $X$ to $Y$ is a mapping $P\colon X \longrightarrow Y$ for which we can find a multilinear operator $T\in \mathcal{L}(X\times \overbrace{\ldots}^N \times X;Y)$ (continuous) which is symmetric (i.e., $T(x_1,\ldots, x_N) = T(x_{\sigma(1)},\ldots, x_{\sigma(N)})$ for every permutation $\sigma$ of the set $\{1,\ldots, N\}$) and satisfying $P(x) = T(x , \ldots , x)$ for every  $x \in X$.
	A (general) polynomial from $X$ to $Y$ is a mapping $P\colon X\longrightarrow Y$ which can be written as a finite sum of homogeneous polynomials. We write $\mathcal{P}(X,Y)$ for the space of all polynomials from $X$ to $Y$. It is immediate that $\mathcal{P}(X,Y)$ is a subspace of $\ell_\infty(B_X,Y)$, so the next result follows again from Theorem~\ref{theorem:ellinftygammaY} and Corollary~\ref{coro:ellinftygammaY-smooth}.
	
	\begin{proposition}\label{prop:polynomials}
		Let $X$, $Y$ be Banach spaces and let $C\subset S_{Y^*}$ be such that $B_{Y^*}=\overline{\conv}^{w^*}(C)$.
		\begin{enumerate}[(a)]
			\item Given $P,Q\in \mathcal{P}(X,Y)$, we have that $P\perp_B Q$ if and only if
			$$
			0\in \conv\left\{\lim  y_n^*(P(x_n))\colon x_n\in B_X,\, y_n^*\in C,\, \lim y_n^*(Q(x_n))=\|Q\|\right\}.
			$$
			\item $P\in \mathcal{P}(X,Y)$ is a smooth point if and only if the set
			$$
			\left\{\lim  y_n^*(P(x_n))\colon x_n\in B_X,\, y_n^*\in C,\, \lim y_n^*(Q(x_n))=\|Q\|\right\}
			$$
			is a singleton for every $Q\in \mathcal{P}(X,Y)$.
		\end{enumerate}
	\end{proposition}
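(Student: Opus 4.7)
The plan is to realize $\mathcal{P}(X,Y)$ isometrically inside $\ell_\infty(B_X,Y)$ by sending each polynomial $P$ to its restriction $P|_{B_X}$, and then to apply directly the already-established Theorem~\ref{theorem:ellinftygammaY} and Corollary~\ref{coro:ellinftygammaY-smooth} with $\Gamma = B_X$. The isometry is immediate from the identity $\|P\| = \sup_{x \in B_X}\|P(x)\|$, which is precisely the restriction of the supremum norm of $\ell_\infty(B_X, Y)$ to $\mathcal{P}(X,Y)$.

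For part (a), since BJ-orthogonality of two vectors is a property of the norm on the two-dimensional subspace they span, the relation $P \perp_B Q$ holds in $\mathcal{P}(X,Y)$ if and only if it holds in the ambient space $\ell_\infty(B_X,Y)$. Applying Theorem~\ref{theorem:ellinftygammaY} with $f = P$ and $g = Q$ then yields the claimed characterization at once (modulo the obvious relabeling of $P$ and $Q$ dictated by the definition of $\perp_B$).

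For part (b), the only subtlety is that $\mathcal{P}(X,Y)$ need not be closed in $\ell_\infty(B_X, Y)$, so Corollary~\ref{coro:ellinftygammaY-smooth} does not apply verbatim. The cleanest workaround is to observe that for any (not necessarily closed) linear subspace $\mathcal{Z}$ of a Banach space $W$ and any $u \in S_\mathcal{Z}$, $z \in \mathcal{Z}$, one has $V(\mathcal{Z}, u, z) = V(W, u, z)$, since by the Hahn-Banach theorem every element of $\F(B_{\mathcal{Z}^*},u)$ extends to an element of $\F(B_{W^*},u)$ of the same norm. Combining this identity with Lemma~\ref{lemma:smooth-V}, smoothness of $P$ in $\mathcal{P}(X,Y)$ is equivalent to $V(\ell_\infty(B_X,Y), P, Q)$ being a singleton for every $Q \in \mathcal{P}(X,Y)$. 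Applying Theorem~\ref{theorem:num-range-C} to $\ell_\infty(B_X, Y)$ with the $w^*$-total subset $\mathfrak{C} = \{y^* \otimes \delta_x : x \in B_X,\, y^* \in C\}$ of $B_{\ell_\infty(B_X, Y)^*}$ (as discussed right before Theorem~\ref{theorem:ellinftygammaY}) then gives exactly the singleton description appearing in the statement.

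I do not anticipate any serious difficulty; the main thing to be careful about is the smoothness transfer just described, since Corollary~\ref{coro:ellinftygammaY-smooth} as stated requires a closed subspace while $\mathcal{P}(X,Y)$ is typically only dense (and not closed) in its closure inside $\ell_\infty(B_X, Y)$. Once this point is handled via the Hahn-Banach identification of numerical ranges, both parts reduce to the $\ell_\infty(\Gamma,Y)$ machinery already developed.
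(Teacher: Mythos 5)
Your proposal follows essentially the same route as the paper, whose proof consists precisely in observing that $\mathcal{P}(X,Y)$ is a subspace of $\ell_\infty(B_X,Y)$ and invoking Theorem~\ref{theorem:ellinftygammaY} and Corollary~\ref{coro:ellinftygammaY-smooth} (and the roles of $P$ and $Q$ in the displayed sets are indeed swapped, as you noted). Your additional Hahn--Banach identification of $V(\mathcal{Z},u,z)$ with the numerical range computed in the ambient space is a correct and welcome refinement of the same argument, since the paper applies the smoothness corollary without commenting on the fact that $\mathcal{P}(X,Y)$ need not be closed in $\ell_\infty(B_X,Y)$.
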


	\subsection{Spaces of operators endowed with the numerical radius as norm}\label{subsection:numericalradiusnorm}
	Let $X$ be a Banach space. We deal here with the space $\mathcal{L}(X)$ endowed with the numerical radius. Let us recall the necessary definitions. Write $\Pi(X):=\{(x,x^*)\in S_X\times S_{X^*}\colon x^*(x)=1 \}$. The \emph{numerical radius} of $T\in\mathcal{L}(X)$ is
	\begin{align*}
		v(T):=\sup\{|x^*(Tx)|\colon (x,x^*)\in \Pi(X)\}.
	\end{align*}
	It is a well-known fact that
	$$
	v(T)=\sup\bigl\{|\lambda|\colon  \lambda\in V(\mathcal{L}(X),\Id,T)\bigr\}
	$$
	for every $T\in\mathcal{L}(X)$ (see \cite[Proposition 2.1.31]{Cabrera-Rodriguez}, for instance). We refer the interested reader to the classical books \cite{BonsallDuncan1,B-D2} and to Sections 2.1 and 2.9 of the book \cite{Cabrera-Rodriguez} for more information and background. It is clear that the numerical radius is a seminorm on $\mathcal{L}(X)$ and $v(T)\leq\|T\|$ for every $T\in\mathcal{L}(X)$. We would like to remark here that although BJ-orthogonality is defined in the framework of norms, it may also be considered in exactly the same way in any seminormed space. Of course, when the seminorm is a norm, we return to the original setting.

	We particularize Corollary~\ref{cor:B-J-orth-Z-Lambda} to the space of operators with the numerical radius, taking
	$$
	\Lambda:=\{x^*\otimes x\colon (x,x^*)\in\Pi(X)\}\subset (\mathcal{L}(X),v)^*
	$$
	which is clearly one-norming for $(\mathcal{L}(X),v)$. The following result appeared in \cite[Theorem~3.4]{Mal}.
	
	\begin{proposition}[\mbox{\cite[Theorem~3.4]{Mal}}]
		Let $X$ be a Banach space and let $T,A\in\mathcal{L}(X)$. Then,
		$$
		T\perp_B^v A \Longleftrightarrow 0\in \conv\left(\bigl\{\lim x_n^*(Ax_n)\overline{x_n^*(T x_n)}\colon (x_n,x_n^*)\in \Pi(X) \ \forall n\in\N, \ \lim |x_n^*(Tx_n)|=v(T)\bigr\}\right).
		$$
	\end{proposition}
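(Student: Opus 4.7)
The plan is to apply Corollary~\ref{cor:B-J-orth-Z-Lambda} directly to the seminormed space $(\mathcal{L}(X),v)$, using the one-norming subset
$$
\Lambda := \bigl\{x^*\otimes x \colon (x,x^*)\in \Pi(X)\bigr\}\subset B_{(\mathcal{L}(X),v)^*},
$$
where $[x^*\otimes x](S):=x^*(Sx)$. The inclusion $\Lambda\subset B_{(\mathcal{L}(X),v)^*}$ is immediate from $|x^*(Sx)|\leq v(S)$ whenever $(x,x^*)\in\Pi(X)$, and the fact that $\Lambda$ is one-norming for $(\mathcal{L}(X),v)$ is precisely the defining formula of the numerical radius. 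The proof of Corollary~\ref{cor:B-J-orth-Z-Lambda} carries over verbatim to the seminormed setting, since it only invokes the $w^*$-compactness of the dual unit ball.

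I would then dispatch the trivial case $v(T)=0$ first: here $T\perp_B^v A$ is automatic, and $0$ belongs to the right-hand side since the condition $\lim|x_n^*(Tx_n)|=v(T)=0$ is vacuously satisfied by every sequence, while the boundedness of $\{x_n^*(Ax_n)\}$ forces the product $x_n^*(Ax_n)\overline{x_n^*(Tx_n)}$ to admit $0$ as a limit point. For $v(T)>0$, the homogeneity of BJ-orthogonality allows one to normalize to $u:=T/v(T)\in S_{(\mathcal{L}(X),v)}$ and apply criterion~(iii) of Corollary~\ref{cor:B-J-orth-Z-Lambda} with $z:=A$. Writing a generic $\psi_n\in\Lambda$ as $x_n^*\otimes x_n$ turns the expression $\psi_n(z)\overline{\psi_n(u)}$ into $x_n^*(Ax_n)\overline{x_n^*(Tx_n)}/v(T)$, and the normalization $\lim|\psi_n(u)|=1$ into $\lim|x_n^*(Tx_n)|=v(T)$. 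Since scaling by the positive constant $v(T)$ leaves the relation of the convex hull to $0$ unchanged, the desired characterization follows.

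There is essentially no real obstacle here; the only point worth double-checking is the (routine) fact that the proofs of Section~\ref{sect:B-J-orth-smooth-num-range} transfer cleanly from normed to seminormed spaces, either by direct inspection or by quotienting out $\ker v$ and completing. Once this is noted, everything else reduces to mechanical unpacking of notation together with the homogeneity rescaling.
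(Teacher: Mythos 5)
Your proposal is correct and follows essentially the same route as the paper: the paper obtains the result by particularizing Corollary~\ref{cor:B-J-orth-Z-Lambda} to the seminormed space $(\mathcal{L}(X),v)$ with the one-norming set $\Lambda=\{x^*\otimes x\colon (x,x^*)\in\Pi(X)\}$, having already remarked that BJ-orthogonality and these results make sense verbatim for seminorms. Your extra care with the degenerate case $v(T)=0$ and the normalization $T/v(T)$ only makes explicit details the paper leaves implicit.
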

	
	In the case of compact operators defined on a reflexive space, it is straightforward to show that the limits can be removed.
	
	\begin{corollary}\label{cor:B-J-orth-num-radius-compact}
		Let $X$ be a reflexive Banach space and let $T,A\in\mathcal{K}(X)$. Then,
		$$
		T\perp_B^v A \Longleftrightarrow 0\in \conv\left(\bigl\{x^*(Ax)\overline{x^*(Tx)}\colon (x,x^*)\in\Pi(X), \ |x^*(Tx)|=v(T)\bigr\}\right).
		$$
	\end{corollary}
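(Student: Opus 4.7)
The plan is to deduce this corollary from the preceding proposition of Mal by replacing the limits in his characterization with direct evaluations at pairs already in $\Pi(X)$, the upgrade being made possible by the reflexivity of $X$ together with the compactness of $T$ and $A$.

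The implication from right to left is immediate: any element $x^*(Ax)\overline{x^*(Tx)}$ of the set on the right equals the value of the constant sequence $(x_n,x_n^*)=(x,x^*)$, so the convex hull on the right is automatically contained in the convex hull of limits appearing in the preceding proposition, and if $0$ lies in the former, then $T\perp_B^v A$ by that proposition.

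For the forward direction, the plan is as follows. Assume $T\perp_B^v A$. By the preceding proposition together with Carath\'eodory's theorem, $0=\sum_{j=1}^N c_j\lambda_j$ is a finite convex combination of values $\lambda_j=\lim_n x_n^{(j)*}(Ax_n^{(j)})\overline{x_n^{(j)*}(Tx_n^{(j)})}$, where $(x_n^{(j)},x_n^{(j)*})\in\Pi(X)$ and $\lim_n|x_n^{(j)*}(Tx_n^{(j)})|=v(T)$. For each $j$, using reflexivity of $X$ (and hence of $X^*$), I would extract subsequences with $x_n^{(j)}\rightharpoonup x^{(j)}$ in $X$ and $x_n^{(j)*}\rightharpoonup x^{(j)*}$ in $X^*$. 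Compactness of $T$ and $A$ then yields $Tx_n^{(j)}\to Tx^{(j)}$ and $Ax_n^{(j)}\to Ax^{(j)}$ in norm, and coupling norm convergence in $X$ with the weak convergence and uniform bound $\|x_n^{(j)*}\|\le 1$ delivers $x_n^{(j)*}(Tx_n^{(j)})\to x^{(j)*}(Tx^{(j)})$ and $x_n^{(j)*}(Ax_n^{(j)})\to x^{(j)*}(Ax^{(j)})$. In particular $\lambda_j=x^{(j)*}(Ax^{(j)})\overline{x^{(j)*}(Tx^{(j)})}$ with $|x^{(j)*}(Tx^{(j)})|=v(T)$.

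The main obstacle I expect is verifying that each weak-limit pair $(x^{(j)},x^{(j)*})$ actually lies in $\Pi(X)$, since weak lower semicontinuity only delivers $\|x^{(j)}\|,\|x^{(j)*}\|\le 1$ rather than the three equalities $\|x^{(j)}\|=\|x^{(j)*}\|=x^{(j)*}(x^{(j)})=1$. Setting aside the trivial case $v(T)=0$ (in which $T\perp_B^v A$ holds for every $A$ and the set on the right obviously contains $0$), the condition $|x^{(j)*}(Tx^{(j)})|=v(T)>0$ forces $x^{(j)}\ne 0$ and $x^{(j)*}\ne 0$. My plan to close the gap is a normalization argument: after passing to the unit vectors $y=x^{(j)}/\|x^{(j)}\|$ and a Hahn--Banach norming functional $y^*\in S_{X^*}$ with $y^*(y)=1$, any strict slackness in $\|x^{(j)}\|\|x^{(j)*}\|<1$ together with the compactness-driven identity $|x^{(j)*}(Tx^{(j)})|=v(T)$ would, by rescaling, produce a pair $(y,y^*)\in\Pi(X)$ at which $|y^*(Ty)|>v(T)$, contradicting the very definition of the numerical radius. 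Once this step is secured for each $j$, the convex combination $\sum_j c_j\, x^{(j)*}(Ax^{(j)})\overline{x^{(j)*}(Tx^{(j)})}=0$ exhibits $0$ as an element of $\conv\{x^*(Ax)\overline{x^*(Tx)}\colon (x,x^*)\in\Pi(X),\ |x^*(Tx)|=v(T)\}$, completing the proof.
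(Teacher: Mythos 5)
Your backward implication and the limit-removal mechanics are fine and follow the route the paper intends (the paper itself only remarks that for compact operators on a reflexive space ``it is straightforward to show that the limits can be removed'' from the preceding proposition): reflexivity gives weakly convergent subsequences, compactness of $T$ and $A$ upgrades $Tx_n^{(j)}\to Tx^{(j)}$, $Ax_n^{(j)}\to Ax^{(j)}$ in norm, and the values $x_n^{(j)*}(Tx_n^{(j)})$, $x_n^{(j)*}(Ax_n^{(j)})$ pass to the limit. However, the step you yourself flag as the main obstacle --- that the weak-limit pair $(x^{(j)},x^{(j)*})$ lies in $\Pi(X)$ --- is not closed by your normalization argument, and as written it fails. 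The quantity $v(T)$ is a supremum only over pairs in $\Pi(X)$; over all of $S_X\times S_{X^*}$ the supremum of $|y^*(Ty)|$ is $\|T\|$, which may greatly exceed $v(T)$. Hence, rescaling to $x^{(j)}/\|x^{(j)}\|$ and $x^{(j)*}/\|x^{(j)*}\|$ produces, under slackness $\|x^{(j)}\|\,\|x^{(j)*}\|<1$, a norm-one pair with $|y^*(Ty)|>v(T)$, but this contradicts nothing unless $y^*(y)=1$, which is exactly the missing information; and if instead you choose $y^*$ by Hahn--Banach to norm $y=x^{(j)}/\|x^{(j)}\|$, then $y^*$ has no relation to $x^{(j)*}$, the pair $(y,y^*)$ \emph{is} a state, and therefore $|y^*(Ty)|\leq v(T)$ automatically --- again no contradiction.

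Moreover, even if you could show $\|x^{(j)}\|=\|x^{(j)*}\|=1$, you would still need $x^{(j)*}(x^{(j)})=1$, and this is not preserved under taking weak limits of both sequences, since the duality pairing is not jointly weakly continuous: in $\ell_2$, for $x_n=(e_1+e_n)/\sqrt{2}$ and the corresponding norming functionals one has $x_n^*(x_n)=1$ for all $n$ while the weak limits pair to $1/2$. So the genuinely nontrivial point --- producing a pair in $\Pi(X)$ with $|x^*(Tx)|=v(T)$ realizing $\lambda_j$ (in particular, that the numerical radius of a compact operator on a reflexive space is attained at a state, which the forward implication of the corollary requires whenever $v(T)>0$) --- is not established by your argument, and the forward implication remains unproved in your proposal. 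This is precisely the detail any complete proof must supply beyond the one-line justification given in the paper.
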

	
	A characterization in the particular case when $X$ has finite dimension has recently been proved by Roy and Sain \cite[Theorem~2.3]{RoySain}.

	\begin{corollary}[{\cite[Theorem~2.3]{RoySain}}]
		Let $X$ be a finite-dimensional space and let $T,A\in\mathcal{L}(X)$. Then
		$$
		T\perp_B^v A \Longleftrightarrow 0\in \conv\left(\bigl\{x^*(Ax)\overline{x^*(Tx)}\colon (x,x^*)\in\Pi(X), \ |x^*(Tx)|=v(T)\bigr\}\right).
		$$
	\end{corollary}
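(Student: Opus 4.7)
The plan is to deduce the statement as an immediate particularization of the preceding Corollary~\ref{cor:B-J-orth-num-radius-compact}. The key observation is that every finite-dimensional Banach space is reflexive, and every bounded linear operator on a finite-dimensional space is automatically compact, so $\mathcal{L}(X) = \mathcal{K}(X)$. Applying Corollary~\ref{cor:B-J-orth-num-radius-compact} to $T, A \in \mathcal{L}(X) = \mathcal{K}(X)$ over the reflexive space $X$ yields the stated equivalence verbatim, with no further work required.

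Alternatively, if one wishes to argue directly from the limit-version \cite[Theorem~3.4]{Mal} stated in the proposition just above (which avoids invoking Corollary~\ref{cor:B-J-orth-num-radius-compact}), the plan is to use compactness to remove the limits. In finite dimensions, $S_X \times S_{X^*}$ is compact, and $\Pi(X) = \{(x,x^*) \in S_X \times S_{X^*} \colon x^*(x) = 1\}$ is closed, hence compact. Given a sequence $(x_n, x_n^*) \in \Pi(X)$ with $\lim |x_n^*(Tx_n)| = v(T)$, extract a convergent subsequence $(x_{\sigma(n)}, x_{\sigma(n)}^*) \to (x, x^*) \in \Pi(X)$; by continuity, $|x^*(Tx)| = v(T)$ and $\lim x_{\sigma(n)}^*(Ax_{\sigma(n)})\overline{x_{\sigma(n)}^*(Tx_{\sigma(n)})} = x^*(Ax)\overline{x^*(Tx)}$. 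Consequently, the ``limit set'' appearing in the previous proposition is contained in (and, trivially, contains) the set appearing in the corollary, so the convex hulls coincide and the limits can be dropped.

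Since both approaches reduce the statement to one-line verifications of well-known finite-dimensional facts (reflexivity, compactness of operators, compactness of $\Pi(X)$), there is no substantive obstacle; the only point requiring a moment of care is the identification $\mathcal{L}(X) = \mathcal{K}(X)$, which legitimates the use of Corollary~\ref{cor:B-J-orth-num-radius-compact} in full generality.
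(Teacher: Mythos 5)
Your first argument is exactly the paper's (implicit) proof: the statement is presented there as an immediate particularization of Corollary~\ref{cor:B-J-orth-num-radius-compact}, using precisely that a finite-dimensional space is reflexive and that $\mathcal{L}(X)=\mathcal{K}(X)$ in that case. The alternative argument via compactness of $\Pi(X)$ is also correct but not needed; so the proposal is correct and follows essentially the same route as the paper.
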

	
	We may state the following characterization of smoothness in $(\mathcal{L}(X),v),$ as a consequence of the previous observations and Corollary~\ref{cor:characterization-smooth-lambda}. We say that $T\in \mathcal{L}(X)$ is a \emph{smooth operator for the numerical radius} if $T$ is a smooth point of $(\mathcal{L}(X),v)$.
	
	\begin{proposition}
		Let $X$ be a Banach space and let $T\in\mathcal{L}(X)$. Then, $T$ is a smooth operator for the numerical radius if and only if
		$$
		\bigl\{\lim x_n^*(Ax_n)\overline{x_n^*(T x_n)}\colon (x_n,x_n^*)\in \Pi(X) \ \forall n\in\N, \ \lim |x_n^*(Tx_n)|=v(T)\bigr\}
		$$
		is a singleton set for every $A\in\mathcal{L}(X)$.
	\end{proposition}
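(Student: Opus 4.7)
The plan is to deduce this proposition as a direct application of Corollary~\ref{cor:characterization-smooth-lambda} to the seminormed space $(\mathcal{L}(X),v)$, mirroring exactly how the preceding BJ-orthogonality characterization for the numerical radius was deduced from Corollary~\ref{cor:B-J-orth-Z-Lambda}. The candidate one-norming subset is
$$
\Lambda := \{x^*\otimes x \colon (x,x^*)\in \Pi(X)\}\subset (\mathcal{L}(X),v)^*,
$$
where, as before, $[x^*\otimes x](S):=x^*(Sx)$ for every $S\in\mathcal{L}(X)$.

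First I would check that $\Lambda\subset B_{(\mathcal{L}(X),v)^*}$ and that $\Lambda$ is one-norming. The inclusion is immediate: for $(x,x^*)\in\Pi(X)$ and $S\in\mathcal{L}(X)$, the defining expression of the numerical radius gives $|[x^*\otimes x](S)|=|x^*(Sx)|\leq v(S)$, so $x^*\otimes x$ has $v$-dual norm at most $1$. The one-norming property is then just the identity $v(S)=\sup\{|x^*(Sx)|\colon (x,x^*)\in\Pi(X)\}=\sup_{\psi\in\Lambda}|\psi(S)|$, which is the definition of $v$.

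Next, I would normalize so that $v(T)=1$, which is the setting in which smoothness is defined (the case $v(T)=0$ is trivial and not covered by the notion of smooth point). Applying the equivalence $(\mathrm{i})\Longleftrightarrow(\mathrm{iii})$ of Corollary~\ref{cor:characterization-smooth-lambda} to the seminormed space $(\mathcal{L}(X),v)$ with $u=T$ and this $\Lambda$, smoothness of $T$ at the numerical radius is equivalent to
$$
\left\{\lim \psi_n(A)\overline{\psi_n(T)} \colon \psi_n\in\Lambda,\ \lim|\psi_n(T)|=1\right\}
$$
being a singleton for every $A\in\mathcal{L}(X)$. Rewriting each $\psi_n=x_n^*\otimes x_n$ with $(x_n,x_n^*)\in\Pi(X)$ and undoing the normalization turns this set into exactly the set appearing in the statement, and the proof is complete.

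There is essentially no obstacle here: the only subtle point is that $v$ is only a seminorm, but as explicitly noted in the paragraph preceding the statement, the abstract numerical range machinery of Section~\ref{sect:B-J-orth-smooth-num-range} extends verbatim to the seminormed setting, so Corollary~\ref{cor:characterization-smooth-lambda} is applicable.
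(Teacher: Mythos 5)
Your proposal is correct and follows essentially the same route as the paper: the authors also obtain this proposition by applying Corollary~\ref{cor:characterization-smooth-lambda} to $(\mathcal{L}(X),v)$ with the one-norming set $\Lambda=\{x^*\otimes x\colon (x,x^*)\in\Pi(X)\}$. Your explicit verification of the one-norming property and the normalization $v(T)=1$ are just the details the paper leaves implicit.
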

	
As far as we could check, the above characterization of smoothness for the numerical radius has not appeared previously in its most general form.

	\section{Bhatia-\v{S}emrl's kind of results}\label{section:BS-kindofresults}
	In the particular case of operators on Hilbert spaces, the results of the Subsection~\ref{subsection:operators} can be improved as there is no need of taking convex hull.  The first characterization in this line was obtained by Stampfli \cite[Theorem~2]{Stampfli} in the special case when one of the operators is the identity. Later, Magajna \cite[Lemma~2.2]{Magajna} observed that Stampfli's result holds for any pair of operators, leading to a complete characterization of BJ-orthogonality  in $\mathcal{L}(H)$. The same characterization was obtained by Bhatia and \v{S}emrl \cite[Remark~3.1]{BhatiaSmerl}, and also by Ke\v{c}ki\'{c} \cite[Corollary~3.1]{Keckic-2005} with different approaches.
	
	Here we present an alternative proof which follows from our Proposition~\ref{prop:B-J-orth-operator-norm} and \cite[Theorem 2]{PaulHosseinDas}.
	
	\begin{corollary}[\mbox{\cite[Lemma~2.2]{Magajna}, \cite[Remark~3.1]{BhatiaSmerl}, \cite[Corollary~3.1]{Keckic-2005}}]
		Let $H$ be a Hilbert space and let $T,A\in\mathcal{L}(H)$. Then
		$ T\perp_B A$ if and only if there exists a sequence $\{x_n\}_{n\in\N}$ in $S_H$ such that $\|Tx_n\|\to\|T\|$ and $\langle Tx_n,Ax_n\rangle\to 0$.
	\end{corollary}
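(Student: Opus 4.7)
The plan is to specialize Proposition~\ref{prop:B-J-orth-operator-norm} to the Hilbert-space setting and then remove the convex hull by invoking a convexity statement for a Stampfli-type joint maximal numerical range. Via the Riesz identification $H^* \cong H$, the natural choices are $C = S_H$ (in the domain) and $D = S_H$ (realizing states as $y \mapsto \langle \cdot, y\rangle$), which are one-norming. With these choices, Proposition~\ref{prop:B-J-orth-operator-norm} reads
\[
T \perp_B A \iff 0 \in \conv\bigl\{\lim \langle Ax_n, y_n\rangle \colon x_n, y_n \in S_H,\ \lim \langle Tx_n, y_n\rangle = \|T\|\bigr\}.
\]

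The easy (``$\Leftarrow$'') direction is immediate: given $\{x_n\} \subset S_H$ with $\|Tx_n\| \to \|T\|$ and $\langle Tx_n, Ax_n\rangle \to 0$, I would set $y_n := Tx_n / \|Tx_n\|$ for all sufficiently large $n$, so that $\langle Tx_n, y_n\rangle = \|Tx_n\| \to \|T\|$ and $\langle Ax_n, y_n\rangle = \overline{\langle Tx_n, Ax_n\rangle}/\|Tx_n\| \to 0$; hence $0$ lies in the set inside the convex hull above, and the proposition yields $T \perp_B A$.

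For the converse, the first step is to reduce the set inside $\conv$ to one depending only on the $x_n$'s. The key Hilbert-space input is that whenever $x_n, y_n \in S_H$ satisfy $\langle Tx_n, y_n\rangle \to \|T\|$, one automatically has $\|Tx_n\| \to \|T\|$ (from $|\langle Tx_n, y_n\rangle| \le \|Tx_n\| \le \|T\|$), and writing $u_n := Tx_n/\|Tx_n\|$ the polarization identity $\|u_n - y_n\|^2 = 2 - 2\re\langle u_n, y_n\rangle \to 0$ gives $y_n - u_n \to 0$. Since $\{Ax_n\}$ is bounded, this yields
\[
\lim \langle Ax_n, y_n\rangle = \tfrac{1}{\|T\|}\lim \langle Ax_n, Tx_n\rangle,
\]
so, defining the joint maximal numerical range type set
\[
W(T,A) := \bigl\{\lim \langle Ax_n, Tx_n\rangle \colon x_n \in S_H,\ \|Tx_n\| \to \|T\|\bigr\},
\]
the BJ-orthogonality condition becomes $0 \in \conv W(T,A)$. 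To drop the convex hull I would appeal to \cite[Theorem~2]{PaulHosseinDas}, which says that $W(T,A)$ is a convex subset of $\K$ (the Hilbert-space analogue of Stampfli's classical convexity theorem for the maximal numerical range). Convexity then gives $0 \in W(T,A)$, producing a sequence $\{x_n\} \subset S_H$ with $\|Tx_n\| \to \|T\|$ and $\langle Ax_n, Tx_n\rangle \to 0$; taking conjugates supplies $\langle Tx_n, Ax_n\rangle \to 0$, which is what was required. The main work is really the convexity of $W(T,A)$, a genuinely Hilbert-specific fact that is not available for general Banach spaces (and, indeed, is what allows the limits-free conclusion here that Li--Schneider's example shows must fail in general); everything else is a clean reduction from the general scheme.
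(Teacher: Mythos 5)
Your proposal is correct and follows essentially the same route as the paper: specialize Proposition~\ref{prop:B-J-orth-operator-norm} (with $C=S_H$, $D=S_H$ via the Riesz identification), identify the resulting set with the joint maximal numerical range $\bigl\{\lim\langle Ax_n,Tx_n\rangle\colon x_n\in S_H,\ \|Tx_n\|\to\|T\|\bigr\}$, and remove the convex hull by the convexity of that set from \cite[Theorem~2]{PaulHosseinDas} (stated in \cite[Lemma~2.1]{Magajna}). The only difference is that you write out the polarization/normalization details that the paper merely ``observes''.
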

	
	\begin{proof}
		It follows from Proposition~\ref{prop:B-J-orth-operator-norm} that
		$$ T\perp_B A \Longleftrightarrow 0\in \conv\left(\bigl\{\lim \langle Ax_n,y_n\rangle\colon x_n,y_n\in S_H \ \forall n\in\N, \ \lim \langle Tx_n,y_n\rangle=\|T\|\bigr\}\right).$$
		Observe that
		\begin{align*}
			\bigl\{\lim \langle Ax_n,y_n\rangle&\colon x_n,y_n\in S_H \ \forall n\in\N, \ \lim \langle Tx_n,y_n\rangle=\|T\|\bigr\} \\
			&=\{\lim \langle Ax_n,Tx_n\rangle\colon x_n\in S_H \ \forall n\in\N, \ \lim\|Tx_n\|=\|T\|\bigr\}
		\end{align*}
		and that the latter set is convex (this was first stated without proof in \cite[Lemma 2.1]{Magajna}, see \cite[Theorem 2]{PaulHosseinDas} for a proof).
	\end{proof}
	
	In the particular case when $H$ is finite-dimensional, Bhatia and \v{S}emrl were the first to write down the characterization of BJ-orthogonality  of two matrices in terms of the elements of $H$ \cite[Theorem~1.1]{BhatiaSmerl}. An alternative proof of this characterization was given by Roy, Bagchi, and Sain in \cite{RoyBagchiSain}. We obtain this result as a consequence of Corollary~\ref{B-J-orth-operator-norm-finite}.
	
	\begin{corollary}[\mbox{Bhatia-\v{S}emrl theorem, \cite[Theorem~1.1]{BhatiaSmerl}}]\label{corollary:BhatiaSmerltheorem}
		Let $H$ be a finite-dimensional Hilbert space and let $T,A\in\mathcal{L}(H)$. Then
		$ T\perp_B A$ if and only if there exists $x\in S_H$ such that $\|Tx\|=\|T\|$ and $Tx\perp_B Ax$.
	\end{corollary}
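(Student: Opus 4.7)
The plan is to specialize Corollary~\ref{B-J-orth-operator-norm-finite} to the Hilbert space setting and then exploit the Toeplitz-Hausdorff theorem to eliminate the convex hull. Since $H$ is a Hilbert space, it is strictly convex, so $\ext(B_H) = S_H$, and under the Riesz identification $H^* \equiv H$ one also has $\ext(B_{H^*})$ in bijection with $S_H$. Writing functionals as $y^*(\cdot) = \langle \cdot, y\rangle$, Corollary~\ref{B-J-orth-operator-norm-finite} reads
\begin{equation*}
T \perp_B A \ \Longleftrightarrow\ 0 \in \conv\bigl(\bigl\{\langle Ax, y\rangle\colon x,y \in S_H,\ \langle Tx, y\rangle = \|T\|\bigr\}\bigr).
\end{equation*}

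Next, I would note that the equality case of Cauchy-Schwarz, $|\langle Tx, y\rangle| \leq \|Tx\| \leq \|T\|$, forces any admissible pair $(x,y)$ to satisfy $\|Tx\| = \|T\|$ and $y = Tx/\|T\|$. Hence the indexing set reduces, and writing $V := \ker(T^*T - \|T\|^2\Id)$ for the top singular subspace (so that $\{x\in S_H\colon \|Tx\|=\|T\|\}=S_V$), the characterization becomes
\begin{equation*}
T \perp_B A \ \Longleftrightarrow\ 0 \in \conv\Bigl(\bigl\{\tfrac{1}{\|T\|}\langle (T^*A|_V)\,v,\, v\rangle\colon v \in S_V\bigr\}\Bigr).
\end{equation*}

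The set inside the convex hull is (a positive rescaling of) the classical numerical range of the operator $T^*A|_V \in \mathcal{L}(V)$. By the Toeplitz-Hausdorff theorem (in the real case, observe that $\langle Bv,v\rangle = \langle \tfrac{1}{2}(B+B^*)v,v\rangle$ is a real interval, so convexity is immediate), this set is already convex. Therefore $0$ belongs to its convex hull if and only if $0$ belongs to the set itself, i.e.\ if and only if there exists $v \in S_V$ with $\langle Tv, Av\rangle = 0$. Since BJ-orthogonality in a Hilbert space coincides with ordinary orthogonality, this is exactly $Tv \perp_B Av$ together with $\|Tv\|=\|T\|$, yielding the desired vector.

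The converse direction is immediate: given $x \in S_H$ with $\|Tx\|=\|T\|$ and $\langle Tx,Ax\rangle = 0$, the choice $y := Tx/\|T\|$ places $\langle Ax,y\rangle = 0$ inside the index set, so $0$ lies in the convex hull and Corollary~\ref{B-J-orth-operator-norm-finite} returns $T\perp_B A$. The only non-routine step is the recognition that after reducing the indexing via Cauchy-Schwarz, the relevant scalar set is the numerical range of $T^*A|_V$, so that Toeplitz-Hausdorff applies and the convex hull can be dropped; everything else is bookkeeping.
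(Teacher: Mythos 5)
Your argument is correct and follows the same skeleton as the paper's proof: specialize Corollary~\ref{B-J-orth-operator-norm-finite} (extreme points of $B_H$ and $B_{H^*}$ are the whole spheres), reduce the index set via the equality case of Cauchy--Schwarz so that the relevant scalars are $\frac{1}{\|T\|}\langle Ax,Tx\rangle$ with $\|Tx\|=\|T\|$, and then drop the convex hull by a convexity argument. Where you genuinely diverge is in that last step. The paper simply cites the known fact that the set $\bigl\{\langle Ax,Tx\rangle\colon x\in S_H,\ \|Tx\|=\|T\|\bigr\}$ is convex (Magajna's Lemma~2.1, with a proof in Paul--Hossein--Das), whereas you make the argument self-contained: you identify the norm-attainment set as the unit sphere of the top singular subspace $V=\ker(T^*T-\|T\|^2\Id)$, observe that the scalar set is the classical numerical range of the compression of $T^*A$ to $V$, and invoke Toeplitz--Hausdorff (with the elementary symmetric-part argument in the real case). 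This buys a proof that does not lean on an external convexity lemma, at the modest cost of some finite-dimensional spectral bookkeeping; the paper's route is shorter and its cited convexity result also covers the infinite-dimensional sequential version used in the preceding corollary. Two cosmetic points: $T^*A|_V$ maps $V$ into $H$, so strictly speaking you should take the compression $P_V(T^*A)|_V\in\mathcal{L}(V)$ (the quadratic-form values $\langle T^*Av,v\rangle$ for $v\in V$ are unchanged, so Toeplitz--Hausdorff applies verbatim), and the normalization $y=Tx/\|T\|$ tacitly assumes $T\neq 0$, the case $T=0$ being trivial on both sides of the equivalence.
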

	
	\begin{proof}
		We may and do suppose that $\|T\|=1$. It follows from Corollary~\ref{B-J-orth-operator-norm-finite} that
		$$
		T\perp_B A \Longleftrightarrow 0\in \conv\left(\bigl\{\langle Ax,y\rangle\colon x, y\in S_H, \ \langle Tx,y\rangle=\|T\|=1\bigr\}\right).
		$$
		Now, observe that
		$$
		\bigl\{\langle Ax,y\rangle\colon x, y\in S_H, \ \langle Tx,y\rangle=1\bigr\}=\bigl\{\langle Ax, Tx\rangle\colon x\in S_H, \ \|Tx\|=1\bigr\}.
		$$
		The result follows since the latter set is convex (\cite[Lemma 2.1]{Magajna}, \cite[Theorem 2]{PaulHosseinDas}).
	\end{proof}
	
	It has been shown by Li and Schneider that the Bhatia-\v{S}emrl theorem cannot be extended in general to arbitrary finite-dimensional Banach spaces \cite[Example~4.3]{LiSchneider}. Actually, the validity of the Bhatia-\v{S}emrl theorem for all operators characterizes Hilbert spaces among finite-dimensional Banach spaces, see \cite{BenitezFernandezSoriano}. However, it is natural to study for which operators $T$ it is possible to have a Bhatia-\v{S}emrl theorem for all operators $A$: conditions on $T$ such that whenever $T\perp_B A$, one has that there is a norm-one $x$ such that $\|Tx\|=\|T\|$ and $Tx\perp_B Ax$ (that is, whether we may remove the convex hull in Corollary~\ref{B-J-orth-operator-norm-compact}). This has been done in \cite{PaulSainGhosh-2016, Sain-JMAA-2017, SainPaul-2013} for the real case and in \cite{PaulSainMalMandal, RoyBagchiSain} for the complex case. Our aim in what follows is to give a unified approach that allows to recover some of these results and to obtain an improvement in the complex setting. Actually, we will work in the more general framework of vector-valued continuous functions on a compact Hausdorff space. To deal with both the real and the complex case, we need to introduce the notion of directional orthogonality from \cite{PaulSainMalMandal}. Given elements $x,y$ of a Banach space $Z$, we say that $x$ is \emph{orthogonal} to $y$ \emph{in the direction of $\gamma\in\T$}, which we denote by $x\perp_\gamma y$, if $\|x+t\gamma y\|\geq \|x\|$ for every $t\in\R$. Obviously, $x\perp_B y$ if and only if $x\perp_\gamma y$ for every $\gamma\in \T$. In the real case, it is obvious that $x\perp_B y$ if and only if $x\perp_{1} y$ if and only if $x\perp_{-1} y$. In the complex case, there are easy examples showing that $x\not\perp_B y$ while $x\perp_\gamma y$ for some $\gamma\in \T$ is possible, see \cite[Example~1]{RoyBagchiSain}. It is shown in \cite[Theorem~4]{RoyBagchiSain} that
		\begin{equation}\label{equation:directionalortho}
		x\perp_\gamma y \Longleftrightarrow \exists\ x^*\in S_{X^*} \text{ with } x^*(x)=\gamma\|x\| \text{ and } \re x^*(y)=0
		\end{equation}
(indeed, this result is immediate as $x\perp_\gamma y$ if and only if $x\perp_B \gamma y$ in the real space $X_\R$ underlying $X$ and $(X_\R)^*=\{\re x^*\colon x^*\in X^*\}$).

For a Hausdorff compact topological space $K$ and a Banach space $Y$, the norm attainment set of $f\in C(K,Y)$ is the (non-empty) set
$$
\mathcal{M}_f=\{t\in K \colon \|f(t)\|=\|f\|\}.
$$
Our main result in $C(K,Y)$ is a Bhatia-\v{S}emrl's type result when $\mathcal{M}_f$ is connected.
	
	\begin{theorem}\label{Compact continuous}
Let $K$ be a compact Hausdorff topological space and let $Y$ be a Banach space. Let $f,g\in C(K,Y)$ be such that $\mathcal{M}_f$ is connected. Then,
			$$
			f\perp_B g \iff \forall \mu\in \mathbb{T}\ \exists t\in \mathcal{M}_f \text{ such that } f(t)\perp_\mu g(t).
			$$
In the real case, we actually have
$$
			f\perp_B g \iff \exists t\in \mathcal{M}_f \text{ such that } f(t)\perp_B g(t).
$$
    \end{theorem}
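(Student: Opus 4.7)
My plan is to combine Corollary~\ref{corollary (to be used)} with a topological connectedness argument so that the second (``connected'') conclusion of Lemma~\ref{Convexity Lemma} applies. Throughout, set
$$
D := \{(t, y^*) \in \mathcal{M}_f \times B_{Y^*}\colon y^*(f(t)) = \|f\|\}, \qquad B := \{y^*(g(t))\colon (t,y^*)\in D\}\subset\K,
$$
where $B_{Y^*}$ carries the weak-star topology. As a first step, I rewrite Corollary~\ref{corollary (to be used)} in our compact setting: by passing to subnets using the weak-star compactness of $B_{Y^*}$ and the continuity of $f$ and $g$, any sequence $\{(t_n,y_n^*)\}$ with $\lim y_n^*(f(t_n))=\|f\|$ clusters at some $(t,y^*)\in D$ for which $y^*(g(t))$ equals the corresponding limit of $y_n^*(g(t_n))$. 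Hence
$$
f\perp_B g \iff 0\in \conv(B).
$$

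The second, and main, step is to show that $B$ is connected in $\K$. Since $D$ is closed in $\mathcal{M}_f\times B_{Y^*}$ (hence compact) and the map $(t,y^*)\mapsto y^*(g(t))$ is continuous, it suffices to show $D$ is connected. For this, I consider the set-valued map $F\colon \mathcal{M}_f \to 2^{B_{Y^*}}$ given by $F(t)=\{y^*\in B_{Y^*}\colon y^*(f(t))=\|f\|\}$, whose graph is $D$. Its values are nonempty (Hahn-Banach), weak-star compact, and convex, hence connected. A short computation using $\|f(t_n)-f(t)\|\to 0$ together with weak-star convergence shows $F$ is upper hemicontinuous: if $W$ is weak-star open and $F(t)\subset W$, then eventually $F(t_n)\subset W$ for $t_n\to t$. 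From this I deduce $D$ is connected by a standard argument: if $D = A_1 \sqcup A_2$ with each $A_i$ relatively open and nonempty, each fiber $\{t\}\times F(t)$ is connected and so lies entirely in some $A_i$; upper hemicontinuity together with weak-star compactness of $F(t)$ (to extract a finite subcover of $F(t)$ and apply $F$ to a neighborhood of $t$) makes $T_i := \{t\in\mathcal{M}_f\colon \{t\}\times F(t) \subset A_i\}$ open, contradicting the connectedness of $\mathcal{M}_f$.

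With $B$ connected, the second part of Lemma~\ref{Convexity Lemma} gives: $0 \in \conv(B)$ if and only if for every $\mu \in \T$ there is $a \in B$ with $\re \mu a = 0$. By the characterization \eqref{equation:directionalortho} of directional orthogonality, the assertion ``$\exists a = y^*(g(t)) \in B$ with $\re \mu a=0$'' is exactly the assertion ``$\exists t\in \mathcal{M}_f$ with $f(t)\perp_\mu g(t)$'', which yields the main equivalence. For the real case, $\T=\{1,-1\}$ and $B$ is a connected subset of $\R$, hence an interval; the condition that $B$ meets both $[0,\infty)$ and $(-\infty,0]$ forces $0\in B$, so there exist $t\in\mathcal{M}_f$ and $y^*\in S_{Y^*}$ with $y^*(f(t))=\|f\|=\|f(t)\|$ and $y^*(g(t))=0$, which means $f(t)\perp_B g(t)$ by Fact~\ref{fact:James}. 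The main obstacle is the upper-hemicontinuity-to-connectedness step for $D$; everything else is clean assembly from the earlier results.
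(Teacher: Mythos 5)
Your proposal is correct, and its overall skeleton coincides with the paper's: reduce $f\perp_B g$ to the statement $0\in\conv(B)$ for the scalar set $B$ indexed by pairs $(t,y^*)$ with $t\in\mathcal{M}_f$ and $y^*(f(t))=\|f\|$, prove that $B$ is connected, and conclude via Lemma~\ref{Convexity Lemma} together with the characterization \eqref{equation:directionalortho} of directional orthogonality (the real case then being immediate). The two steps are, however, implemented differently. For the membership $0\in\conv(B)$, you start from Corollary~\ref{corollary (to be used)} and remove the limits by a cluster-point argument in the compact space $K\times (B_{Y^*},w^*)$, whereas the paper invokes the extreme-point characterization of Corollary~\ref{corollary:C(K,Y)extreme} (based on Singer's theorem and the Ruess--Stegall description) and the inclusion of the extreme-point set into the larger set $\mathcal{A}_2$. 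For connectedness, you use the standard fact that the graph of an upper hemicontinuous set-valued map with nonempty $w^*$-compact convex (hence connected) values over a connected domain is connected; the paper's Lemma~\ref{Topological Lemma} proves the same thing by a direct separation argument (closedness of the projections via $w^*$-compactness plus the convex-segment trick, following Bonsall--Duncan), and does so in the greater generality of $C_b(\Omega,Y)$ with a closed connected subset of $\mathcal{M}_f$ and no compactness of $\Omega$, which the paper then reuses for Proposition~\ref{prop:BS-realcase}; your argument genuinely needs compactness of $\mathcal{M}_f$, which is available here but makes it slightly less reusable. Two cosmetic points: since neither $K$ nor $(B_{Y^*},w^*)$ need be metrizable, the cluster-point and upper-hemicontinuity arguments should be phrased with nets rather than sequences; and when passing between your set $B$ (with $y^*\in B_{Y^*}$) and the set in Corollary~\ref{corollary (to be used)} (with $y^*\in S_{Y^*}$) one should either note that $y^*(f(t))=\|f\|$ with $t\in\mathcal{M}_f$ forces $\|y^*\|=1$ when $f\neq 0$, or dispose of the trivial case $f=0$ separately. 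Your chain of equivalences also yields the easy (sufficiency) direction automatically, which the paper simply declares straightforward.
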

	
The most technical part of the proof is contained in the next lemma, which is actually valid in $C_b(\Omega,Y)$. We still use the notation $\mathcal{M}_f$ for the (maybe empty) norm attainment set of a function $f\in C_b(\Omega,Y)$. 	

	\begin{lemma}\label{Topological Lemma}
Let $\Omega$ be a Hausdorff topological space, let $Y$ be a Banach space, and let $f\in C_b(\Omega, Y)$. Suppose that there exists a closed connected subset $D$ of $\Omega$ such that $D\subseteq\mathcal{M}_f$. Then, for every $g\in C_b(\Omega, Y)$, the set
			$$
			\{y^*(g(t))\colon t\in D,\ y^*\in S_{Y^*},\ y^*(f(t))=\|f\|\}
			$$
			is a connected subset of $\mathbb{C}$.
	\end{lemma}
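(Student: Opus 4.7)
The plan is to partition the set
$$
S := \bigl\{y^*(g(t))\colon t\in D,\ y^*\in S_{Y^*},\ y^*(f(t))=\|f\|\bigr\}
$$
into ``slices'' $\Phi(t):=\{y^*(g(t))\colon y^*\in S_{Y^*},\ y^*(f(t))=\|f\|\}$ indexed by $t\in D$, to check that each $\Phi(t)$ is convex (hence connected), and then to transfer connectedness of $D$ to connectedness of $S=\bigcup_{t\in D}\Phi(t)$ via an upper-semicontinuity-type argument.

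For the first point, fix $t\in D$. Since $\|f(t)\|=\|f\|$, the set $\{y^*\in S_{Y^*}\colon y^*(f(t))=\|f\|\}$ coincides with the face $\F(B_{Y^*},f(t)/\|f\|)$, which is convex. Its image $\Phi(t)$ under the linear functional $y^*\mapsto y^*(g(t))$ is therefore a convex subset of $\C$, in particular connected. (One recognises that $\Phi(t)=V(Y,f(t)/\|f\|,g(t))$.)

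To pass from slice-by-slice to global connectedness, I would argue by contradiction: suppose $S=A\sqcup B$ with $A,B$ non-empty and both relatively open in $S$. Since each $\Phi(t)$ is connected and contained in $A\cup B$, it lies entirely in one of them, yielding a partition $D=D_A\sqcup D_B$ with $D_\star:=\{t\in D\colon \Phi(t)\subseteq \star\}$, both parts non-empty. It then suffices to show $D_A$ is closed in $D$ (the case of $D_B$ being symmetric) to contradict the connectedness of $D$. So take a net $(t_\alpha)\subseteq D_A$ with $t_\alpha\to t_0\in D$ (closedness of $D$ in $\Omega$ ensures the limit stays in $D$). For each $\alpha$, by Hahn-Banach pick $y_\alpha^*\in S_{Y^*}$ with $y_\alpha^*(f(t_\alpha))=\|f\|$, so $c_\alpha:=y_\alpha^*(g(t_\alpha))\in \Phi(t_\alpha)\subseteq A$. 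By Banach-Alaoglu a subnet $(y_{\alpha_\beta}^*)$ converges weak-star to some $y^*\in B_{Y^*}$.

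The main obstacle, and the technical heart of the argument, is the interplay between the weak-star convergence of $(y_{\alpha_\beta}^*)$ and the norm-continuity of $f,g$. This is bridged by the elementary estimate
$$
|y_{\alpha_\beta}^*(h(t_{\alpha_\beta}))-y^*(h(t_0))|\leq \|h(t_{\alpha_\beta})-h(t_0)\| + |y_{\alpha_\beta}^*(h(t_0))-y^*(h(t_0))|
$$
for $h\in\{f,g\}$: the first summand tends to $0$ by norm-continuity of $h$, the second by weak-star convergence. Applied with $h=f$ this forces $y^*(f(t_0))=\|f\|$ and hence $\|y^*\|=1$; applied with $h=g$ it gives $c_{\alpha_\beta}\to y^*(g(t_0))\in \Phi(t_0)$. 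Since $A$ is relatively closed in $S$ and each $c_{\alpha_\beta}\in A$, the limit $y^*(g(t_0))$ lies in $A$, so $\Phi(t_0)\cap A\neq \emptyset$; by connectedness of $\Phi(t_0)$ this forces $\Phi(t_0)\subseteq A$, i.e.\ $t_0\in D_A$, closing the argument. Working with nets rather than sequences is essential here, since neither $\Omega$ nor $(B_{Y^*},w^*)$ is assumed to be first-countable.
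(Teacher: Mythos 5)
Your proof is correct, and it uses the same two technical ingredients as the paper's: the joint continuity of $(t,y^*)\mapsto y^*(h(t))$ on $\Omega\times (B_{Y^*},w^*)$ (you invoke the identical two-term estimate, plus weak-star compactness of $B_{Y^*}$ and nets rather than sequences), and the convexity of the face $\{y^*\in S_{Y^*}\colon y^*(f(t))=\|f\|\}$. The difference is where the separation argument is run. The paper proves the stronger intermediate fact that $\mathcal{A}=\{(t,y^*)\in D\times S_{Y^*}\colon y^*(f(t))=\|f\|\}$ is connected in the product topology --- a separation of $\mathcal{A}$ has closed projections onto $\Omega$ by the net argument, connectedness of $D$ yields a point $\widetilde t$ lying in both projections, and the segment of functionals over $\widetilde t$ (convexity of the face) gives the contradiction --- and then obtains the lemma by pushing $\mathcal{A}$ forward through the continuous map $(t,y^*)\mapsto y^*(g(t))$. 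You instead separate the image set $S\subset\C$ directly, decompose it into the convex slices $\Phi(t)=V\bigl(Y,f(t)/\|f\|,g(t)\bigr)$, and transfer the separation to a partition of $D$ into two pieces which your subnet argument shows are both closed, contradicting connectedness of $D$. The trade-off is mild: the paper's route isolates a reusable statement (connectedness of $\mathcal{A}$, in the spirit of the Bonsall--Duncan proof that the spatial numerical range is connected), while yours avoids introducing the product space and reaches the conclusion a bit more directly; the technical work is the same. One caveat common to both arguments: they tacitly assume $\|f\|>0$ (you divide by $\|f\|$ to form the face, the paper needs it to keep the segment $\mathcal{B}$ inside $S_{Y^*}$); for $f=0$, $Y=\R$, $g\equiv 1$ the set in the statement is $\{1,-1\}$, so the degenerate case $f=0$ is implicitly excluded, which is harmless in the intended application since there the orthogonality conclusion is trivial.
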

	
	\begin{proof}
We follow the lines of the proof of the spatial numerical range of operators being connected given in \cite[Section 11]{BonsallDuncan1}. Consider the product $\Omega\times B_{Y^*}$, topologized by the product of the topology of $\Omega$ and the $w^*$ topology of $B_{Y^*}$. For any fixed $h\in C_b(\Omega, Y)$, define $\Theta_h\colon \Omega\times B_{Y^*}\longrightarrow \mathbb{C}$ by
			$$
			\Theta_h(t,y^*)=y^*(h(t)) \qquad \bigl((t,y^*)\in \Omega\times B_{Y^*}\bigr).
			$$
			Observe that
			\begin{align*}
				|\Theta_h(t,y^*)-\Theta_h(s,z^*)| & = |y^*(h(t))-z^*(h(s))|\\
				&\leq |y^*(h(t))-y^*(h(s))|+|J_Y(h(s))(y^*)-J_Y(h(s))(z^*)|\\
				& \leq \|h(t)-h(s)\|+|J_Y(h(s))(y^*)-J_Y(h(s))(z^*)|,
			\end{align*}
			where $J_Y\colon Y\longrightarrow Y^{**}$ denotes the canonical embedding. It follows from the continuity of $h$ and the $w^*$-continuity of $J_Y(h(s))$ that the map $\Theta_h$ is continuous.

Thus to prove our assertion, it is enough to show that
			$$
			\mathcal{A}=\{(t,y^*)\in D\times S_{Y^*}\colon y^*(f(t))=\|f\|\}
			$$
			is connected. Suppose by contradiction that $\mathcal{A}=F_1\cup F_2$, where $F_1,F_2$ are non-empty and closed in $\mathcal{A}$ with $\F_1 \cap F_2 =\emptyset$. The projections $\pi_1(F_1)$ and $\pi_1(F_2)$ are closed subsets of $\Omega$. Indeed, consider any net $(t_\tau)$ in $\pi_1(F_1)$ such that $t_\tau\to t_0$ in $\Omega$. Evidently, $\pi_1(F_1)\subseteq D$ and $D$ is closed. Thus, $t_0\in D$. For each $\tau$, consider $y_\tau^*\in S_{Y^*}$ such that $(t_\tau,y_\tau^*)\in F_1$. The net $(y^*_\tau)$ has a cluster point $y_0^*$ in $B_{Y^*}$, since $B_{Y^*}$ is $w^*$-compact. Thus, $(t_0,y_0^*)$ is a cluster point of the net $((t_\tau,y_\tau^*))$. Moreover, it follows from the continuity of $\Theta_f$ that $y_0^*(f(t_0))=\|f\|$. Thus, $y_0^*\in S_{Y^*}$ and we have $(t_0,y^*_0)\in \mathcal{A}$. Since $F_1$ is closed in $\mathcal{A}$, we have $(t_0,y_0^*)\in F_1$. Therefore, $t_0\in \pi_1(F_1)$ and $\pi_1(F_1)$ is a closed subset of $\Omega$. Similarly, $\pi_1(F_2)$ is a closed subset of $\Omega$. Note that $D=\pi_1(F_1)\cup \pi_1(F_2)$. It follows from the connectedness of $D$ that there exists $\widetilde{t}\in \pi_1(F_1)\cap \pi_1(F_2)$. Therefore, we may find $y_1^*$ and $y_2^*$ in $S_{Y^*}$ such that $(\widetilde{t},y_1^*)\in F_1$ and $(\widetilde{t},y_2^*)\in F_2$. Then,
			$$
			\mathcal{B}:=\left\{\left(\widetilde{t},(\lambda y_1^*+(1-\lambda)y_2^*)\right) \colon \lambda \in [0,1]\right\}
			$$
			is a connected subset and it is contained in $\mathcal{A}$. However, $(\mathcal{B}\cap F_1)$ and $(\mathcal{B}\cap F_2)$ are non-empty, closed in $\mathcal{B}$ and form a separation of $\mathcal{B}$. This contradicts the connectedness of $\mathcal{B}$.
	\end{proof}

We are now ready to give the pending proof of the theorem.
	
	\begin{proof}[Proof of Theorem~\ref{Compact continuous}]
We only prove the necessity as the sufficiency is straightforward. Suppose that $f\perp_B g$ and consider
			\begin{align*}
				& \mathcal{A}_1 := \{y^*(g(t))\colon t\in \mathcal{M}_f,\ y^*\in \ext(B_{Y^*}),\ y^*(f(t))=\|f\|\},\\
				& \mathcal{A}_2 := \{y^*(g(t))\colon t\in \mathcal{M}_f,\ y^*\in S_{Y^*},\ y^*(f(t))=\|f\|\}.
			\end{align*}
Observe that $\mathcal{A}_1\subseteq \mathcal{A}_2$ and that $0\in\conv(\mathcal{A}_1)$ by Corollary~\ref{corollary:C(K,Y)extreme}, hence $0\in\conv(\mathcal{A}_2)$.
Now, by Lemma~\ref{Topological Lemma}, $\mathcal{A}_2$ is connected. Therefore, by Lemma \ref{Convexity Lemma}, for every $\mu\in \mathbb{T}$ there exists $(t,y^*)\in \mathcal{M}_f\times S_{Y^*}$ such that $y^*(f(t))=\|f(t)\|=\|f\|$ and $\re \mu y^*(g(t))=0$. Hence, \eqref{equation:directionalortho} shows that $f(t)\perp_\mu g(t)$, as desired.
	\end{proof}

Our next aim is to apply Theorem~\ref{Compact continuous} to spaces of operators.
	Given Banach spaces $X$, $Y$ and $T\in \mathcal{L}(X,Y)$, let $M_T$ denote the (maybe empty) norm attainment set of $T$, that is,
	$$
	M_T:=\{x\in S_X\colon \|Tx\|=\|T\|\}.
	$$
In the real case, the result we get is the following one, which appeared in \cite{PaulSainGhosh-2016}.

	\begin{proposition}[\mbox{\cite[Theorem~2.1]{PaulSainGhosh-2016}}]\label{prop-BS-KXY-real}
\label{prop:BS-realcase}
		Let $X$ be a \emph{real} reflexive Banach space, let $Y$ be a \emph{real} Banach space, and let $T,A\in\mathcal{K}(X,Y)$. Suppose that $M_T=D\cup (-D)$ for a connected subset $D$ of $S_X$. Then, $T\perp_B A$ if and only if there exists $x\in D$ such that $Tx\perp_B Ax$.
	\end{proposition}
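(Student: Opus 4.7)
The plan is to view $T$ and $A$ as elements of $C(K,Y)$ with $K:=B_X$ endowed with the weak topology (a compact Hausdorff space by reflexivity of $X$), and then essentially rerun the proof of Theorem~\ref{Compact continuous}, using the real scalar symmetry $x\mapsto -x$ to circumvent the fact that $M_T$ itself need not be connected. First I would verify the reduction: compactness of $T,A$ together with reflexivity of $X$ imply that $T,A$ are weak-to-norm continuous on $B_X$ (every weakly convergent net in $B_X$ is sent into the relatively norm-compact set $T(B_X)$, so has a norm-convergent subnet whose limit must be the weak image), hence $T,A\in C(K,Y)$, their operator norms coincide with their $C(K,Y)$-norms, and the norm-attainment set of $T$ in $C(K,Y)$ equals $M_T=D\cup(-D)$. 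Consequently $T\perp_B A$ in $\mathcal{K}(X,Y)$ is equivalent to $T\perp_B A$ in $C(K,Y)$.

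By Corollary~\ref{corollary:C(K,Y)extreme} this gives $0\in\conv(S)$ with $S:=\bigl\{y^*(Ax):x\in M_T,\,y^*\in\ext(B_{Y^*}),\,y^*(Tx)=\|T\|\bigr\}$. The crux is the following symmetry step: in the real case the involution $(x,y^*)\mapsto(-x,-y^*)$ preserves the constraint $y^*(Tx)=\|T\|$, maps $\ext(B_{Y^*})$ into itself, and fixes the value $y^*(Ax)$; combined with $M_T=D\cup(-D)$, this identifies $S$ with its $D$-restricted version, so $0\in\conv(\mathcal{A}_2)$, where
$$
\mathcal{A}_2:=\bigl\{y^*(Ax):x\in D,\ y^*\in S_{Y^*},\ y^*(Tx)=\|T\|\bigr\}.
$$
Next I would apply Lemma~\ref{Topological Lemma} with $\Omega:=D$ (subspace topology inherited from $K$), $f:=T|_D$, and $g:=A|_D$: the set $D$ is closed in itself, is connected by hypothesis, and trivially equals $\mathcal{M}_{T|_D}$ since every point of $D$ lies in $M_T$. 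The lemma then yields that $\mathcal{A}_2$ is a connected subset of $\R$.

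A real connected set whose convex hull contains $0$ must itself contain $0$ (this is the real case of Lemma~\ref{Convexity Lemma}, since $\T=\{\pm 1\}$), so there exist $x\in D$ and $y^*\in S_{Y^*}$ with $y^*(Tx)=\|T\|$ and $y^*(Ax)=0$, and Fact~\ref{fact:James} applied in $Y$ yields $Tx\perp_B Ax$. The converse is routine: any such pair $(x,y^*)$ induces $y^*\otimes x\in S_{\mathcal{L}(X,Y)^*}$ witnessing $T\perp_B A$. The only delicate step is the sign symmetry that permits passing from $M_T$ to $D$; it fails in the complex case (where $\T$ is infinite), which is precisely the reason the complex analogue must instead be stated using directional orthogonality $\perp_\mu$ as in Theorem~\ref{Compact continuous}.
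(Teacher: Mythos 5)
Your proof is correct and follows essentially the same route as the paper: embed $\mathcal{K}(X,Y)$ isometrically into $C\bigl((B_X,w),Y\bigr)$, get $0\in\conv(\mathcal{A}_2)$ from the extreme-point characterization (the paper invokes Corollary~\ref{B-J-orth-operator-norm-compact} directly, you invoke Corollary~\ref{corollary:C(K,Y)extreme} after embedding) combined with the sign symmetry coming from $M_T=D\cup(-D)$, and conclude via Lemma~\ref{Topological Lemma} and the real case of Lemma~\ref{Convexity Lemma}. A minor bonus of your variant is that applying Lemma~\ref{Topological Lemma} with $\Omega=D$ itself makes the closedness hypothesis on $D$ automatic, whereas the paper applies the lemma inside $(B_X,w)$ where $D$ is not assumed to be closed.
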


	\begin{proof}
	We only prove the necessity as sufficiency is obvious. Suppose that $T\perp_B A$ and consider
	\begin{align*}
		\mathcal{A}_1&:=\{y^*(Ax)\colon x\in \ext{B_X}, \ y^*\in \ext{B_{Y^*}}, \ y^*(Tx)=\|T\|\}; \\
		\mathcal{A}_2&:=\{y^*(Ax)\colon x\in D, \ y^*\in S_{Y^*}, \ y^*(Tx)=\|T\|\}.
	\end{align*}
	Let us show that $\mathcal{A}_1\subseteq \mathcal{A}_2$. Indeed,
	\begin{align*}
		\mathcal{A}_1 &\subseteq \{y^*(Ax)\colon x\in S_X, \ y^*\in S_{Y^*}, \ y^*(Tx)=\|T\|\} \\
		& = \{y^*(Ax)\colon x\in M_T, \ y^*\in S_{Y^*}, \ y^*(Tx)=\|T\|\}=\mathcal{A}_2.
	\end{align*}
	The first inclusion is obvious and the second equality is clear since $y^*(Tx)=\|T\|$ implies $x\in M_T$. For the third one, given $x\in M_T$, there exist $\theta \in \{-1,1\}$ and $z\in D$ with $x=\theta z$. If $y^*\in S_{Y^*}$ satisfies $y^*(Tx)=\|T\|$, then we have that
	$$
	(\theta y^*)(Tz)=y^*(Tx)=\|T\| \qquad \text{and} \qquad (\theta y^*)(Az)=y^*(Ax),
	$$
	and we deduce the desired equality. Now, $B_X$ equipped with the weak topology is a compact Hausdorff topological space. Consider the Banach space $C\bigl((B_X,w),Y\bigr)$. The identification $T\longmapsto \widetilde{T}$ where $\widetilde{T}=T|_{B_X}$, is an isometric embedding of $\mathcal{K}(X,Y)$ into  $C\bigl((B_X,w),Y\bigr)$. Thus, by virtue of this identification, we have that the set
	$$\mathcal{A}_3:=\{y^*(\widetilde{A}x)\colon x\in D, \ y^*\in S_{Y^*}, \ y^*(\widetilde{T}x)=\|\widetilde{T}\|\}$$
	coincides with $\mathcal{A}_2$ and is connected by Lemma~\ref{Topological Lemma}. It follows from Corollary~\ref{B-J-orth-operator-norm-compact} that $0\in\conv(\mathcal{A}_1)$ and so $0\in\conv(\mathcal{A}_3)$. Hence, Lemma~\ref{Convexity Lemma} gives that for every $\mu\in\{-1,1\}$ there exists $x_\mu\in D$ such that $\widetilde{T} x_\mu \perp_\mu \widetilde{A}x_\mu$. Therefore, there exists $x\in D$ such that $Tx\perp_B Ax$ as desired.
\end{proof}

The complex case can be treated similarly using the notion of directional orthogonality. Our main result extends \cite[Theorem~2.1]{PaulSainGhosh-2016} to the complex case and \cite[Theorem~7]{RoyBagchiSain} and \cite[Theorem~2.6]{PaulSainMalMandal} to the infinite-dimensional case. Observe that the connectedness of $M_T$ \emph{in the complex case} is equivalent to requiring that $M_T=\bigcup_{\theta\in\T} \theta D$ for a connected set $D$. In the real case, the second condition is weaker.

	\begin{theorem}\label{theorem:newBS-result-complex}
		Let $X$ be a \emph{complex} reflexive Banach space, let $Y$ be a \emph{complex} Banach space, and let $T,A\in\mathcal{K}(X,Y)$. Suppose that $M_T$ is connected. Then, $T\perp_B A$ if and only if for each $\gamma\in \T$ there exists $x\in M_T$ such that $Tx\perp_\gamma Ax$.
	\end{theorem}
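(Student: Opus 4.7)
The sufficiency is immediate: given $\gamma \in \mathbb{T}$ and $x \in M_T$ with $Tx \perp_\gamma Ax$, one has $\|T + t \gamma A\| \geq \|Tx + t\gamma Ax\| \geq \|Tx\| = \|T\|$ for every $t \in \mathbb{R}$, which gives $T \perp_\gamma A$ for every $\gamma \in \mathbb{T}$, and hence $T \perp_B A$. The necessity is the real content, and my plan is to mirror the proof of Proposition~\ref{prop:BS-realcase}, with two adjustments: the connected piece of the norm attainment set is now $M_T$ itself, and Lemma~\ref{Convexity Lemma} is invoked for arbitrary $\mu \in \mathbb{T}$ (not just $\mu = \pm 1$), the conclusion being translated into directional orthogonality via \eqref{equation:directionalortho}.

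Assuming $T \perp_B A$, I would introduce
\[
\mathcal{A}_1 := \bigl\{y^*(Ax) \colon x \in \ext(B_X),\ y^* \in \ext(B_{Y^*}),\ y^*(Tx)=\|T\|\bigr\}
\]
and
\[
\mathcal{A}_2 := \bigl\{y^*(Ax) \colon x \in M_T,\ y^* \in S_{Y^*},\ y^*(Tx)=\|T\|\bigr\}.
\]
The inclusion $\mathcal{A}_1 \subseteq \mathcal{A}_2$ is trivial here (no $\theta = \pm 1$ gymnastics needed), because $y^*(Tx)=\|T\|$ forces $\|Tx\|=\|T\|$, i.e., $x \in M_T$. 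Corollary~\ref{B-J-orth-operator-norm-compact} then yields $0 \in \conv(\mathcal{A}_1) \subseteq \conv(\mathcal{A}_2)$.

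To apply Lemma~\ref{Topological Lemma} I pass to the continuous-function setting: since $X$ is reflexive and $T, A$ are compact, the restrictions $\widetilde{T}:=T|_{B_X}$ and $\widetilde{A}:=A|_{B_X}$ belong to $C((B_X,w),Y)$, and the norm attainment set of $\widetilde T$ is precisely $M_T$. The map $x \mapsto \|Tx\|$ is weakly continuous on $B_X$ (because compact operators are weak-to-norm continuous on bounded sets of reflexive spaces), so $M_T$ is weakly closed; combined with the hypothesis that $M_T$ is connected, Lemma~\ref{Topological Lemma} applies with $D = M_T$, showing that $\mathcal{A}_2$ is a connected subset of $\mathbb{C}$.

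Finally, fix $\gamma \in \mathbb{T}$. Since $\mathcal{A}_2$ is connected and $0 \in \conv(\mathcal{A}_2)$, the second part of Lemma~\ref{Convexity Lemma} applied with $\mu = \gamma$ produces $x \in M_T$ and $y^* \in S_{Y^*}$ with $y^*(Tx)=\|T\|$ and $\re\bigl(\gamma\, y^*(Ax)\bigr)=0$. Setting $z^*:=\gamma y^* \in S_{Y^*}$, one gets $z^*(Tx)=\gamma\|Tx\|$ and $\re z^*(Ax)=0$, so \eqref{equation:directionalortho} gives $Tx \perp_\gamma Ax$. The main technical point, and the step I expect to require the most care, is verifying the hypotheses of Lemma~\ref{Topological Lemma}, namely the weak-closedness of $M_T$ in $B_X$; everything else is a translation between $\mathbb{T}$-invariant numerical data and the directional orthogonality relation.
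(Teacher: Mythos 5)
Your proposal is correct and is exactly the route the paper has in mind: the paper proves this theorem by simply remarking that one can repeat the argument of Proposition~\ref{prop:BS-realcase} (or, alternatively, apply Theorem~\ref{Compact continuous} directly, since $M_T$ is connected), and your write-up is that first alternative carried out in detail, with the inclusion $\mathcal{A}_1\subseteq\mathcal{A}_2$, the passage to $C\bigl((B_X,w),Y\bigr)$, and Lemmas~\ref{Topological Lemma} and~\ref{Convexity Lemma} used just as in the real case. Your explicit verification that $M_T$ is weakly closed (via weak-to-norm continuity of $T$ on $B_X$, valid for $T\neq 0$, the case $T=0$ being trivial) correctly supplies the hypothesis of Lemma~\ref{Topological Lemma}, so there is no gap.
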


This result can be proved following a completely analogous argument to the one for Proposition~\ref{prop-BS-KXY-real}, or alternatively, it can be established as a direct consequence of Theorem~\ref{Compact continuous} since $M_T$ is connected in this case.

When $X$ is finite-dimensional, the previous two results clearly apply.
	
	\begin{corollary}[\mbox{\cite[Theorem~7]{RoyBagchiSain} and \cite[Theorem~2.6]{PaulSainMalMandal}}]
		Let $X$ be a finite-dimensional space, let $Y$ be a Banach space, and let $T, A\in\mathcal{L}(X,Y)$. In the real case, suppose that $M_T=D\cup -D$ for a connected set $D$; in the complex case, suppose that $M_T$ is connected. Then, $T\perp_B A$ if and only if for each $\gamma\in \T$ there exists $x\in M_T$ such that $Tx\perp_\gamma Ax$.
	\end{corollary}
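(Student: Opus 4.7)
The plan is to deduce this corollary directly from the two preceding vector-valued Bhatia--\v{S}emrl results: Proposition~\ref{prop-BS-KXY-real} for the real case and Theorem~\ref{theorem:newBS-result-complex} for the complex case. The essential observation is that every finite-dimensional normed space is reflexive, and moreover, every bounded linear operator from a finite-dimensional space has finite-dimensional range and is therefore compact, so $\mathcal{L}(X,Y)=\mathcal{K}(X,Y)$. Hence the hypotheses of both previous results are automatically satisfied once $X$ is finite-dimensional.

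In the complex case, assuming $M_T$ is connected, Theorem~\ref{theorem:newBS-result-complex} applies verbatim and yields that $T\perp_B A$ if and only if, for every $\gamma\in\T$, there exists $x\in M_T$ with $Tx\perp_\gamma Ax$. In the real case, assuming $M_T=D\cup(-D)$ for a connected set $D$, Proposition~\ref{prop-BS-KXY-real} gives that $T\perp_B A$ if and only if there exists $x\in D$ (hence in $M_T$) with $Tx\perp_B Ax$. To match the statement of the corollary, note that when $\K=\R$ the set $\T$ consists of only $\pm 1$, and the relation $u\perp_\gamma v$ reduces to $\|u+t\gamma v\|\geq \|u\|$ for all $t\in\R$, which for $\gamma\in\{\pm 1\}$ is equivalent to $u\perp_B v$. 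Thus the two formulations of the conclusion coincide in the real setting.

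The only step requiring a little care is to reconcile the sufficiency direction with the statement, but this is immediate from Fact~\ref{fact:James}: if for every $\gamma\in\T$ we can find $x\in M_T$ and, via \eqref{equation:directionalortho}, a functional $y^*\in S_{Y^*}$ with $y^*(Tx)=\gamma\|T\|$ and $\re y^*(Ax)=0$, then the functional $y^*\otimes x\in S_{\mathcal{L}(X,Y)^*}$ satisfies $(y^*\otimes x)(T)=\gamma\|T\|$ and $\re (y^*\otimes x)(A)=0$, which (by rotating across all $\gamma\in\T$ and applying Lemma~\ref{Convexity Lemma}) forces $0\in V(\mathcal{L}(X,Y),T/\|T\|,A)$ and hence $T\perp_B A$ via Proposition~\ref{prop:BJ-using-numranges}. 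There is no real obstacle here: the entire argument is a direct specialization of already-established machinery, and the main content lies in observing that finite-dimensionality subsumes both the reflexivity and the compactness assumptions needed for the cited results.
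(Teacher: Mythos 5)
Your proposal is correct and follows exactly the paper's route: the paper proves this corollary by simply noting that, since a finite-dimensional $X$ is reflexive and $\mathcal{L}(X,Y)=\mathcal{K}(X,Y)$, Proposition~\ref{prop-BS-KXY-real} and Theorem~\ref{theorem:newBS-result-complex} apply verbatim. Your additional care in the real case (identifying $\perp_{\pm1}$ with $\perp_B$ so the two phrasings of the conclusion agree) and your functional-analytic check of sufficiency are both sound, though the latter is even more simply handled by the pointwise estimate $\|T+t\gamma A\|\geq\|Tx+t\gamma Ax\|\geq\|Tx\|=\|T\|$.
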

	
	We finally give another Bhatia-\v{S}emrl's type result which improves Corollary~\ref{corollary:uniformalgebras} for a class of holomorphic functions on the open unit disk.
	
	\begin{corollary}\label{corollary:holomorphic}
		Let $f\in \mathbb{A}(\mathbb{D})$ with $\|f\|=1$ and let $g$ be a holomorphic function on the open unit disk. Suppose that $M_f\subset \T$ is a connected subset of $\T$ and $g$ has radial limit $g^*$ with modulus one at every $z\in M_f$. Then,
		$$f \perp_B g \iff \forall \mu\in \mathbb{T}\ \exists z_0\in M_f\ \text{such that } \mu \overline{f(z_0)}g^*(z_0)\in \{i,-i\}.$$
	\end{corollary}
	
	\begin{proof}
		Observe that Corollary~\ref{corollary:uniformalgebras} gives that
		$$
		f\perp_B g \iff 0\in \conv\{\overline{f(z)}g^*(z)\colon z\in M_f\}.
		$$
		Using that the set $\{\overline{f(z)}g^*(z)\colon z\in M_f\}$ is connected and Lemma~\ref{Convexity Lemma}, we have that
		\begin{align*}
			f\perp_B g &\iff \forall \mu\in \T \ \exists z_0\in M_f \text{ such that } \re \mu \overline{f(z_0)}g^*(z_0)=0 
		\end{align*}
		and the result follows from $|f(z_0)|=|g^*(z_0)|=1$.
	\end{proof}
	
	This result applies to a class of inner functions of the disk algebra $\mathbb{A}(\mathbb{D})$, known as finite Blaschke products. A \emph{Blaschke product} of degree $n$ is defined by
	$$
	B_n(z):=z^k \prod_{j=1}^n \frac{|a_j|}{a_j} \frac{z-a_j}{1-\overline{a_j}z} \qquad (z\in \mathbb{D})
	$$
	where $k$ is an integer, $k\geq0$, and $0<|a_j|<1$, $1\leq j \leq n$. Observe that $|B_n(z)|=1$ for $z\in \T$. We refer the reader to \cite[page~310]{Rudin} for more information and background.

	\begin{example}\label{example:Blaschke}
	Let $B_m$ and $B_n$ be two Blaschke products of degree $m$ and $n$, respectively, viewed as elements of $\mathbb{A}(\mathbb{D})$. Then,
	$$
	B_n\perp_B B_m \Longleftrightarrow \forall \mu\in \mathbb{T}\ \exists z_0\in \mathbb{T}\ \text{such that } \mu \overline{B_n(z_0)}B_m(z_0)\in \{i,-i\}.
	$$
	\end{example}

	\section{Applications: obstructive results for spear vectors, spear operators, and Banach spaces with numerical index one}\label{section:An-application}
	
	The aim of this section is to use the results in Section~\ref{sect:B-J-orth-smooth-num-range} together with a mix of ideas from numerical ranges and BJ-orthogonality to obtain obstructive results for the existence of spear vectors, spear operators and, in particular, for the possibility of having $n(X)=1$ for a Banach space $X$. Let us introduce here some notation which will be used throughout this section. Let $Z$ be a Banach space. We write $\smooth(Z)$ to denote the set of smooth points of $Z$. For $z\in Z$, $z^\perp=\{x\in Z\colon z\perp_B x\}$ and $^\perp z=\{x\in Z\colon x\perp_B z\}$. Finally, $\StrExp(B_Z)$ denotes the set of \emph{strongly exposed points} of $B_Z$: $z_0\in \StrExp(B_Z)$ if there is $f_0\in S_{Z^*}$ such that whenever $\lim \re f_0(z_n)=1$ for $\{z_n\}\subset B_{Z}$, it follows that $\lim z_n=z_0$ in norm.

	\subsection{Spear vectors}
	Let us first give some notation. Let $Z$ be a Banach space and let $u\in S_Z$. The \emph{numerical radius} of $z\in Z$ with respect to $(Z,u)$ is
	$$
	v(Z,u,z):=\sup\{|\lambda| \colon \lambda\in V(Z,u,z)\}=
	\sup\{|\phi(z)| \colon \phi\in \F(B_{Z^*},u)\},
	$$
	which is a seminorm on $Z$ satisfying $v(Z,u,z)\leq \|z\|$ for every $z\in Z$.
	When $v(Z,u,\cdot)$ is a norm in $Z$, we say that $u$ is a \emph{vertex}.
	When $v(Z,u,z)=\|z\|$ for every $z\in Z$, $u$ is said to be a \emph{spear vector}. It is known that $u$ is a spear vector if and only if
	$$
	\max_{\theta\in \T}\|u+\theta z\|=1+\|z\|\quad \forall z\in Z.
	$$
	We write $\Spear(Z)$ for the set of spear vectors of $Z$.
	A lot of information on spear vectors can be found in Chapter 2 of the book \cite{KMMP-SpearsBook}.
	
	Consider a Banach space $Z$ and a vertex $u\in S_Z$, and let us consider $Z$ endowed with the norm $v_u$ given by the numerical radius with respect to $u$:
	$$
	v_u(z):=v(Z,u,z)=\sup\{|\phi(z)| \colon \phi\in \F(B_{Z^*},u)\} \qquad (z\in Z).
	$$
	Then, we can consider its dual space $(Z,v_u)^*$ consisting of the linear functionals $\psi\colon Z \longrightarrow \K$ satisfying
	$$
	\sup\{|\psi(y)| \colon y\in Z, \, v_u(y)\leq1\}<\infty,
	$$
	endowed with the norm
	$$
	v_u^*(\psi):=\sup\{|\psi(y)| \colon y\in Z, \, v_u(y)\leq1\} \qquad(\psi\in (Z,v_u)^*).
	$$
	For $x\in Z$ with $v_u(x)=1$, the numerical range of $y\in Z$ with respect to the numerical range space $\bigl((Z,v_u),x\bigr)$ is
	$$
	V\bigl((Z,v_u),x,y\bigr)=\{\psi(y)\colon \psi\in (Z,v_u)^*, \ v_u^*(\psi)=\psi(x)=1\}.
	$$
	
	Our obstructive result for spear vectors will follow from the next result.
	
	\begin{theorem}\label{theorem:smooth-not-orthogonal}
		Let $Z$ be a Banach space and let $u\in S_Z$ be a vertex of $Z$. If $z$ is smooth in $(Z,v_u)$, then $z\not\perp_B^{v_u}u$.
	\end{theorem}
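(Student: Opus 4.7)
The plan is to obtain a contradiction by producing a nonzero element in $V\bigl((Z,v_u),z,u\bigr)$ while, under the hypotheses, this set must equal $\{0\}$. Since both Birkhoff-James orthogonality and smoothness are homogeneous, I may normalize so that $v_u(z)=1$. Applying Proposition~\ref{prop:BJ-using-numranges} to the normed space $(Z,v_u)$ (with $z$ playing the role of the norm-one vector), the assumption $z\perp_B^{v_u} u$ is equivalent to $0\in V\bigl((Z,v_u),z,u\bigr)$. On the other hand, Lemma~\ref{lemma:smooth-V} applied to $(Z,v_u)$ tells us that smoothness of $z$ forces $V\bigl((Z,v_u),z,y\bigr)$ to be a singleton for every $y\in Z$. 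In particular, $V\bigl((Z,v_u),z,u\bigr)=\{0\}$, so it suffices to exhibit a nonzero element in this set.

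To produce such an element, I exploit the fact that the original states $\phi\in \F(B_{Z^*},u)$ descend to states on $(Z,v_u)$ at the point $z$, provided they attain their maximum modulus at $z$. Concretely, $v_u(z)=\sup\{|\phi(z)|\colon \phi\in \F(B_{Z^*},u)\}=1$, and since $\F(B_{Z^*},u)$ is $w^*$-compact and $\phi\longmapsto|\phi(z)|$ is $w^*$-continuous, the supremum is attained at some $\phi_0\in \F(B_{Z^*},u)$. Set $\theta:=\phi_0(z)\in\T$ and consider the functional $\eta:=\overline{\theta}\,\phi_0$, viewed as a linear form on $Z$.

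The key verification is that $\eta\in (Z,v_u)^*$ with $v_u^*(\eta)=\eta(z)=1$. Indeed, because $\phi_0\in \F(B_{Z^*},u)$ is one of the functionals used in the sup defining $v_u$, one has $|\phi_0(y)|\leq v_u(y)$ for every $y\in Z$; multiplying by $\overline{\theta}$ does not alter this, so $v_u^*(\eta)\leq 1$. The reverse inequality is immediate from
\[
\eta(z)=\overline{\theta}\,\phi_0(z)=\overline{\theta}\,\theta=1=v_u(z).
\]
Moreover, $\eta(u)=\overline{\theta}\,\phi_0(u)=\overline{\theta}$, since $\phi_0(u)=1$. Consequently, $\overline{\theta}\in V\bigl((Z,v_u),z,u\bigr)$, and $|\overline{\theta}|=1\neq 0$ contradicts $V\bigl((Z,v_u),z,u\bigr)=\{0\}$.

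The whole argument is short once one notices the right passage from states with respect to $u$ to states with respect to $z$; the only place that might look delicate is checking that a functional from the original dual $Z^*$ actually lives in $(Z,v_u)^*$ with the prescribed norm, but this is forced by the very definition of $v_u$ as a supremum over $\F(B_{Z^*},u)$. No appeal to completeness of $(Z,v_u)$ is needed, since Proposition~\ref{prop:BJ-using-numranges} and Lemma~\ref{lemma:smooth-V} rest only on Hahn-Banach and on the definition of the abstract numerical range.
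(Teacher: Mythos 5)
Your proof is correct and follows essentially the same route as the paper: the paper isolates your construction of the unimodular element $\overline{\theta}\in V\bigl((Z,v_u),z,u\bigr)$ (attaining the supremum on the $w^*$-compact face $\F(B_{Z^*},u)$, rotating, and checking $v_u^*(\eta)=\eta(z)=1$) as a separate lemma, and then concludes exactly as you do via Lemma~\ref{lemma:smooth-V} and Proposition~\ref{prop:BJ-using-numranges}, only phrased directly rather than by contradiction. Your closing remarks on why completeness of $(Z,v_u)$ is irrelevant are a sound (implicit in the paper) observation.
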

	
	A technical part of the proof is contained in the following lemma which could be of independent interest.
	
	\begin{lemma}\label{lemma}
		Let $Z$ be a Banach space, let $u\in S_Z$ be a vertex, and let $z\in Z$ with $v_u(z)=1$. Then, $V\bigl((Z,v_u),z,u\bigr)\cap \T\neq \emptyset$.
	\end{lemma}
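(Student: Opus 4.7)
The plan is to reduce the lemma to the assertion that \emph{$u$ is a spear vector of the normed space $(Z, v_u)$}, i.e., that
$$
\max_{\theta\in\T} v_u(u+\theta y)=1+v_u(y) \qquad (y\in Z).
$$
To prove this reduction claim, I would unfold the definition
$$
v_u(u+\theta y)=\sup_{\phi\in\F(B_{Z^*},u)} |\phi(u)+\theta\phi(y)|=\sup_{\phi\in\F(B_{Z^*},u)} |1+\theta\phi(y)|,
$$
and exploit that $\F(B_{Z^*},u)$ is $w^*$-compact (a $w^*$-closed subset of $B_{Z^*}$) together with the $w^*$-continuity of $\phi\mapsto |\phi(y)|$ to pick $\phi_0\in\F(B_{Z^*},u)$ attaining $|\phi_0(y)|=v_u(y)$. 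Rotating by $\theta_0\in\T$ so that $\theta_0\phi_0(y)=v_u(y)\geq 0$, I obtain $v_u(u+\theta_0 y)\geq |1+\theta_0\phi_0(y)|=1+v_u(y)$; the reverse inequality is the triangle inequality in the norm $v_u$, using $v_u(u)=1$.

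With the spear property in hand, the conclusion follows by a one-line Hahn–Banach argument. Given $z\in Z$ with $v_u(z)=1$, applying the claim to $y=z$ gives $\theta_0\in\T$ with $v_u(z+\theta_0 u)=2$. By Hahn–Banach in the Banach space $(Z,v_u)$, there exists $\psi\in (Z,v_u)^*$ with $v_u^*(\psi)=1$ and $\psi(z+\theta_0 u)=2$. Since $|\psi(z)|\leq v_u^*(\psi)v_u(z)=1$ and $|\psi(u)|\leq v_u^*(\psi)v_u(u)=1$, the identity $\psi(z)+\theta_0\psi(u)=2$ forces $\psi(z)=1$ and $\theta_0\psi(u)=1$, so $\psi(u)=\overline{\theta_0}\in\T$. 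Thus $\psi$ belongs to the face of $B_{(Z,v_u)^*}$ determined by $z$, and $\psi(u)=\overline{\theta_0}\in V\bigl((Z,v_u),z,u\bigr)\cap\T$, as required.

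The only non-routine ingredient is the identification of $u$ as a spear vector of $(Z,v_u)$, and this rests entirely on the attainment of the supremum defining $v_u(y)$ via $w^*$-compactness of $\F(B_{Z^*},u)$; the remaining normalizations and the Hahn–Banach step are standard. I do not foresee any further obstacle.
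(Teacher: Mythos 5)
Your argument is correct, but it follows a different route from the paper. The paper's proof is direct: since $v_u(z)=1$ and $\F(B_{Z^*},u)$ is $w^*$-compact, the supremum defining $v_u(z)$ is attained, so after a rotation one gets $\phi_0\in S_{Z^*}$ and $\theta_0\in\T$ with $\phi_0(z)=1$ and $\phi_0(u)=\theta_0$; the only work is to check that this very functional satisfies $v_u^*(\phi_0)=1$, which follows because $\overline{\theta_0}\phi_0\in\F(B_{Z^*},u)$ gives $|\phi_0(y)|\leq v_u(y)$ for all $y$, and then $\theta_0=\phi_0(u)\in V\bigl((Z,v_u),z,u\bigr)\cap\T$ immediately. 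You instead first prove that $u$ is a spear vector of $(Z,v_u)$ (using the same attainment-on-the-face ingredient), then apply Hahn--Banach in $(Z,v_u)$ at the point $z+\theta_0 u$ of $v_u$-norm $2$, and force $\psi(z)=1$, $\psi(u)\in\T$ by an equality-in-the-triangle-inequality argument; all of these steps are sound (note that Hahn--Banach needs no completeness of $(Z,v_u)$, and your forcing step $\re\psi(z)=\re\theta_0\psi(u)=1$ with $|\psi(z)|,|\psi(u)|\leq 1$ indeed yields $\psi(z)=1$ and $\psi(u)=\overline{\theta_0}$). What each approach buys: the paper's proof is shorter and exhibits the required state explicitly as an element of the original dual sphere lying in $\T\,\F(B_{Z^*},u)$, while yours isolates the conceptually pleasant intermediate fact that $u$ is always a spear vector of its own numerical-radius space $(Z,v_u)$, after which the conclusion is soft; the price is an extra Hahn--Banach layer that the paper avoids.
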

	
	\begin{proof}
		Since $v_u(z)=1$, there exist $\phi_0\in S_{Z^*}$ and $\theta_0\in \T$ such that $\phi_0(u)=\theta_0$ and $\phi_0(z)=1$. We claim that $\phi_0 \in (Z,v_u)^*$ and $v_u^*(\phi_0)=1$. Indeed,
		fix $y\in Z$ with $v_u(y)\leq 1$. As $\overline{\theta_0}\phi_0(u)=1$, we have that
		$\overline{\theta_0}\phi_0(y)\in V(Z,u,y)$, hence $|\phi_0(y)|\leq v(Z,u,y)=v_u(y)\leq 1$. This shows that $\phi_0\in (Z,v_u)^*$ and
		$$
		v_u^*(\phi_0)=\sup\{|\phi_0(y)| \colon y\in Z, \, v_u(y)\leq1\}\leq 1.
		$$
		On the other hand, since $v_u(u)=1$, we have that $v_u^*(\phi_0)\geq |\phi_0(u)|=1$.
		
		This, together with $\phi_0(z)=1$, gives that
		\begin{equation*}
			\theta_0=\phi_0(u)\in V\bigl((Z,v_u),z,u\bigr)=\{\psi(u)\colon \psi\in (Z,v_u)^*, \ v_u^*(\psi)=\psi(z)=1\}. \qedhere
		\end{equation*}
	\end{proof}
	
	We are now ready to present the pending proof.
	
	\begin{proof}[Proof of Theorem~\ref{theorem:smooth-not-orthogonal}]
		As $z$ is a smooth point, we have that $z\neq 0$ so, since $u$ is a vertex, this implies that $v_u(z)\neq 0$. Now, $V\left((Z,v_u),\dfrac{z}{v_u(z)},u\right)$ is a singleton set by Lemma~\ref{lemma:smooth-V} as the norm of $(Z,v_u)$ is smooth at $z$, hence also at $\dfrac{z}{v_u(z)}$. Moreover, since $v_u\left(\dfrac{z}{v_u(z)}\right)=1$, it follows from Lemma~\ref{lemma} that
		$$
		V\left((Z,v_u),\dfrac{z}{v_u(z)},u\right)=\{\theta_0\}
		$$
		for some $\theta_0\in \T$. Hence, $0\notin V\left((Z,v_u),\dfrac{z}{v_u(z)},u\right)$ so Proposition~\ref{prop:BJ-using-numranges} gives that $\dfrac{z}{v_u(z)}\not\perp_B^{v_u}u$ and hence $z\not\perp_B^{v_u}u$.
	\end{proof}
	
	We are ready to obtain the promised obstructive result for spear vectors.
	
	\begin{corollary}\label{cor:smooth-orth-spear}
		Let $Z$ be a Banach space and $u\in S_Z$. If there exists a smooth point $z_0$ in $Z$ such that $z_0\perp_B u$, then $(Z,v_u)$ is not isometrically isomorphic to $Z$. In particular,
		$u$ is not a spear vector or, in other words,
		$$
		\left(\bigcup_{z\in \smooth(Z)} z^\perp\right)\cap \Spear(Z)=\emptyset
		\quad \text{and} \quad
		\smooth(Z)\cap \left(\bigcup_{z \in \Spear(Z)} {^\perp z}\right)=\emptyset.
		$$
	\end{corollary}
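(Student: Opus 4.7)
The plan is to establish the corollary by targeting its core assertion, namely that $u \notin \Spear(Z)$, via a direct contrapositive argument based on Theorem~\ref{theorem:smooth-not-orthogonal}. Once this is proved, the two displayed set equalities are literal rewrites (the first says no $u \in \Spear(Z)$ lies in $z^\perp$ for any smooth $z$, and the second is the symmetric statement using $^\perp z$), and the preceding non-isomorphism statement reduces to the same fact together with the trivial observation that if $u$ were a spear vector then $(Z,v_u)$ would agree with $Z$ as a normed space, so the identity would provide an isometric isomorphism.

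For the main step, I would argue as follows. Suppose towards contradiction that $u$ is a spear vector. By the very definition of spear vector, this means $v_u(y) = \|y\|$ for every $y \in Z$, so the seminorm $v_u$ coincides with the original norm. Three consequences follow at once: $u$ is a vertex of $Z$ (since $v_u$ is now a genuine norm), the smooth points of $(Z,v_u)$ are exactly those of $Z$, and $\perp_B^{v_u}$ coincides with $\perp_B$. Applying Theorem~\ref{theorem:smooth-not-orthogonal} to $z_0$, which is smooth in $(Z,v_u)$, yields $z_0 \not\perp_B^{v_u} u$, which after the identification $\perp_B^{v_u} = \perp_B$ contradicts the hypothesis $z_0 \perp_B u$. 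Hence $u \notin \Spear(Z)$, and the ``in other words'' reformulations follow immediately.

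The main subtlety I foresee is the isometric isomorphism assertion in its strict sense: one must exclude not only the identity but any isometric bijection $T\colon Z \to (Z,v_u)$. To handle this, I would first verify that $u$ is automatically a spear vector of $(Z,v_u)$ itself: indeed, for every $\phi \in \F(B_{Z^*},u)$ one has $\phi(y) \in V(Z,u,y)$ and hence $|\phi(y)| \leq v_u(y)$ for all $y\in Z$, so $\phi \in (Z,v_u)^*$ with $v_u^*(\phi) = \phi(u) = 1$, from which one deduces that the numerical radius in $(Z,v_u)$ relative to $u$ equals $v_u$ itself. Any isometric isomorphism $T$ would then transport $u$ to a spear vector $T^{-1}(u)$ of $Z$ and $z_0$ to a smooth point $T^{-1}(z_0)$; the contradiction would have to come from running Theorem~\ref{theorem:smooth-not-orthogonal} inside $(Z,v_u)$ and pulling back through $T$. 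Closing this step cleanly (i.e., linking the specific pair $(z_0,u)$ to the transported pair) is the one place where I anticipate real work, and absent a further ingredient I expect the argument to recover at least the identity-isometry version, which is already enough for the ``in particular'' clause that the paper actually uses.
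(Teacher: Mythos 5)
Your treatment of the ``in particular'' clause is correct and is in substance the paper's own argument: if $u\in\Spear(Z)$ then $v_u=\|\cdot\|$, so $u$ is a vertex, smoothness and $\perp_B$ in $Z$ coincide with smoothness and $\perp_B^{v_u}$ in $(Z,v_u)$, and Theorem~\ref{theorem:smooth-not-orthogonal} applied to $z_0$ contradicts $z_0\perp_B u$; the two displayed identities are, as you say, mere reformulations. The only divergence is logical order: the paper deduces the spear statement from the non-isomorphism statement (observing that for a spear vector the identity map is an isometric isomorphism onto $(Z,v_u)$), whereas you prove the spear statement directly; this difference is immaterial.

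Concerning the stronger assertion that no isometric isomorphism at all exists between $Z$ and $(Z,v_u)$: you are right that an abstract isometry $T\colon Z\to(Z,v_u)$ only transports the data to the pair $(Tz_0,Tu)$, giving $Tz_0$ smooth in $(Z,v_u)$ with $Tz_0\perp_B^{v_u}Tu$, so that some link between $Tu$ and $u$ is needed. You should know, however, that the paper's proof does not supply the further ingredient you were hoping for: it asserts in one sentence that, if $(Z,v_u)$ is isometrically isomorphic to $Z$, then the \emph{same} point $z_0$ is smooth in $(Z,v_u)$ and $z_0\perp_B^{v_u}u$, and then invokes Theorem~\ref{theorem:smooth-not-orthogonal}. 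That transfer is immediate when the isometry is the identity (i.e., when $v_u=\|\cdot\|$, which is the spear case), but it is stated without further justification for an arbitrary isometry, which is precisely the difficulty you flagged. Your auxiliary observation that $u\in\Spear\bigl((Z,v_u)\bigr)$ (a computation essentially identical to that of Lemma~\ref{lemma}), hence $T^{-1}u\in\Spear(Z)$, is correct but, as you acknowledge, does not by itself yield a smooth point of $Z$ that is BJ-orthogonal to $T^{-1}u$. In short, you have established everything the paper's argument proves in detail and everything the subsequent applications to spear operators and to the numerical index ultimately use; the full non-isomorphism clause for non-identity isometries is left without a complete argument both in your proposal and in the paper's written proof.
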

	
	\begin{proof}
		Suppose on the contrary that $(Z,v_u)$ is isometrically isomorphic to $Z$. Since $z_0$ is smooth in $Z$ and $z_0\perp_B u$, we have that $z_0$ is smooth in $(Z,v_u)$ and $z_0\perp_B^{v_u} u$, which contradicts
		Theorem~\ref{theorem:smooth-not-orthogonal}. If $u$ is a spear vector, then the identity map $\Id\colon(Z,\|\cdot\|)\longrightarrow (Z,v_u)$ is an isometric isomorphism.
	\end{proof}
	
	Observe that the last part of Corollary~\ref{cor:smooth-orth-spear} can be shown with a geometrical argument. Indeed, if $z_0\in S_Z$ is a smooth point in $Z$, then there exists a unique $\phi\in S_{Z^*}$ such that $\phi(z_0)=1$ and $\phi\in \ext(B_{Z^*})$. Now, if $u\in S_{Z}$ is a spear vector, then $|\phi(u)|=1$ by \cite[Corollary 2.8]{KMMP-SpearsBook}, and so $z_0$ cannot be BJ-orthogonal to $u$. Note that this argument only depends on the subspace generated by $\{u,z_0\}$.
	\begin{remark}
		If $z_0$ is a smooth point in a Banach space $Z$ and $u\in S_Z$ is a spear vector in $\spn\{z_0,u\}$, then $z_0$ is not BJ-orthogonal to $u$. 
	\end{remark}
	
	\subsection{Spear operators}\label{subsec:obstructive-spearoperators}
	In the case when $Z=\mathcal{L}(X,Y)$ for some Banach spaces $X$ and $Y$, spear vectors are called \emph{spear operators}, which were introduced in \cite{Ardalani} and have been deeply studied in \cite{KMMP-SpearsBook}, where we refer for more information and background.
	
	Our aim here is to particularize Corollary~\ref{cor:smooth-orth-spear} for the numerical radius with respect to an operator and for spear operators. The results follow directly from the above ones, but we include some particular notation for this case. Given Banach spaces $X$ and $Y$, and $G\in \mathcal{L}(X,Y)$ with $\|G\|=1$, the \emph{numerical radius of $T\in \mathcal{L}(X,Y)$ with respect to $G$} is
	\begin{align*}
		v_G(T)  :=v\bigl(\mathcal{L}(X,Y),G,T \bigr)  &=\inf_{\delta>0}\sup\bigl\{|y^*(Tx)|\colon y^*\in S_{Y^*},\,x\in S_X,\, \re y^*(Gx)>1-\delta\bigr\} \\
		& = \sup \bigl\{\lim |y_n^*(T x_n)|\colon \{y_n^*\}\subset S_{Y^*},\,\{x_n\}\subset S_X,\, \lim y_n^*(G x_n)=1 \bigr\},
	\end{align*}
	where the second and third equalities hold by \cite[Proposition~2.14]{KMMPQ} and our Theorem~\ref{theorem:num-range-C}, respectively. We refer to \cite{KMMPQ} for background on numerical radius with respect to an operator.
	
	The main result of the previous subsection in this setting reads as follows.

	\begin{corollary}\label{cor:smooth-orth-spear-L(XY)}
		Let $X$, $Y$ be Banach spaces and let $G\in \mathcal{L}(X,Y)$ with $\|G\|=1$. If there exists a smooth operator $T$ in $\mathcal{L}(X,Y)$ such that $T \perp_B G$, then $\bigl(\mathcal{L}(X,Y),v_G\bigr)$ is not isometrically isomorphic to $\mathcal{L}(X,Y)$. In particular, $G$ is not a spear operator or, in other words,
		$$
		\left(\bigcup_{T\in \smooth(\mathcal{L}(X,Y))} T^\perp\right)\cap \Spear(\mathcal{L}(X,Y))=\emptyset
		\ \ \text{and} \ \
		\smooth(\mathcal{L}(X,Y))\cap \left(\bigcup_{G \in \Spear(\mathcal{L}(X,Y))} {^\perp G}\right)=\emptyset.
		$$
	\end{corollary}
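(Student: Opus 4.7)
The plan is to deduce this statement as a direct specialization of Corollary~\ref{cor:smooth-orth-spear} to the case $Z=\mathcal{L}(X,Y)$ and $u=G$. So the work is essentially bookkeeping: verify that every notion appearing in the present statement is the corresponding notion from the general Banach space setting of the previous subsection, and then translate the set-theoretic conclusion.

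First I would check that the hypotheses match. Since $\|G\|=1$, we have $G\in S_{\mathcal{L}(X,Y)}$, so $G$ is admissible as the distinguished norm-one element. By definition, a smooth operator is a smooth point of $\mathcal{L}(X,Y)$, and a spear operator on $\mathcal{L}(X,Y)$ is exactly a spear vector of the Banach space $\mathcal{L}(X,Y)$ in the sense of the previous subsection. It remains to see that $v_G$ as defined here coincides with $v_u$ applied to $Z=\mathcal{L}(X,Y)$, $u=G$. The definition $v_G(T)=v(\mathcal{L}(X,Y),G,T)$ makes this identification immediate; the other two expressions for $v_G(T)$ given in the statement of this subsection (the infimum over $\delta>0$ and the supremum over sequences $\{x_n\},\{y_n^*\}$) are exactly the reformulations provided by \cite[Proposition~2.14]{KMMPQ} and by our Theorem~\ref{theorem:num-range-C}, so they agree with $v_u$ as well.

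With all identifications in place, Corollary~\ref{cor:smooth-orth-spear} yields directly the first assertion: if there is a smooth $T\in\mathcal{L}(X,Y)$ with $T\perp_B G$, then $(\mathcal{L}(X,Y),v_G)$ is not isometrically isomorphic to $\mathcal{L}(X,Y)$, and consequently $G$ cannot be a spear operator (otherwise the identity $\Id\colon(\mathcal{L}(X,Y),\|\cdot\|)\longrightarrow(\mathcal{L}(X,Y),v_G)$ would itself provide such an isometric isomorphism).

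The final two set equalities are then purely formal rewrites. Unpacking the notation, $G\in T^\perp$ means $T\perp_B G$, while $T\in{}^\perp G$ means the same thing from the other side. Thus
$$
\left(\bigcup_{T\in\smooth(\mathcal{L}(X,Y))}T^\perp\right)\cap\Spear(\mathcal{L}(X,Y))\neq\emptyset
$$
would produce a pair $(T,G)$ with $T$ smooth, $G$ a spear operator, and $T\perp_B G$, which is precisely what the first part has just ruled out; the second equality is the symmetric statement. Since the proof is a formal reduction, no real obstacle arises — the only point worth being careful about is that one writes $v_G$ in both of its guises (as a supremum over states and as a sequential supremum) so the reader can see that it is the same object as $v_u$ from the abstract setting.
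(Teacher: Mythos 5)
Your proposal is correct and matches the paper exactly: the paper states that this corollary "follows directly from the above ones," i.e.\ it is the specialization of Corollary~\ref{cor:smooth-orth-spear} to $Z=\mathcal{L}(X,Y)$, $u=G$, with the identifications of smooth operators, spear operators, and $v_G$ with the abstract notions, which is precisely your reduction. The bookkeeping you carry out (including the identity map remark for the spear case and the unpacking of $T^\perp$ and ${}^\perp G$) is the intended argument.
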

	
	Our next aim is to provide an obstructive result for the existence of spear operators which uses the geometry of the domain and range spaces instead of the geometry of the space of operators and so it would be easier to apply. Other restrictions on the geometry of the domain and range spaces to the existence of spear operators can be found in \cite[Ch.~6]{KMMP-SpearsBook}. To obtain the desired result through the application of Corollary~\ref{cor:smooth-orth-spear-L(XY)}, we present the following lemma, which provides a tool to construct smooth operators which are BJ-orthogonal to a given one, under mild assumptions.
	
	\begin{lemma}\label{lemma:existence-operator-smooth-orth}
		Let $X$, $Y$ be Banach spaces and suppose that $x_0\in B_X$ is strongly exposed by $x_0^*\in S_{X^*}$. Given $A\in\mathcal{L}(X,Y)$, suppose that there is a smooth point $u_0\in Y$ satisfying $u_0\perp_B Ax_0$. Then, the operator $T\in\mathcal{L}(X,Y)$ given by $T(x)=x_0^*(x)u_0$ is smooth and satisfies $T\perp_B A$.
	\end{lemma}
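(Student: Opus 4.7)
The plan is to verify separately the two conclusions — smoothness of $T$ and $T\perp_B A$ — each by a direct appeal to results already established in the paper. Before doing so, I would record the basic structure of $T=x_0^*\otimes u_0$: since $\|x_0^*\|=1=x_0^*(x_0)$ and $\|Tx\|=|x_0^*(x)|\|u_0\|$ for every $x\in X$, one obtains $\|T\|=\|u_0\|$ and $Tx_0=u_0$, so $T$ attains its norm at $x_0$.

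For the smoothness of $T$, the strategy is to apply Proposition~\ref{prop:suff-condition-smooth-operator} with the distinguished vector $x_0\in S_X$. Condition~(1) there — that $Tx_0$ be a smooth point of $Y$ — is immediate from $Tx_0=u_0$ and the hypothesis on $u_0$. For condition~(2), suppose $\{x_n\}\subset B_X$ satisfies $\|Tx_n\|\to\|T\|$; then $|x_0^*(x_n)|\to 1$. Choose $\theta_n\in\T$ with $\theta_n x_0^*(x_n)=|x_0^*(x_n)|$ and, by compactness of $\T$, extract a subsequence along which $\theta_n\to\theta_0\in\T$. Then $\re x_0^*(\theta_n x_n)\to 1$, and since $x_0$ is strongly exposed by $x_0^*$, one concludes $\theta_n x_n\to x_0$ in norm, whence $x_n\to\overline{\theta_0}x_0$ along the subsequence. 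Setting $\alpha:=\overline{\theta_0}$ yields condition~(2). This is the main (mild) technical step; the point to watch is simply the careful bookkeeping with unimodular rotations in the complex case.

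For $T\perp_B A$, James' characterization (Fact~\ref{fact:James}) applied in $Y$ to $u_0\perp_B Ax_0$ provides $y_0^*\in S_{Y^*}$ with $y_0^*(u_0)=\|u_0\|$ and $y_0^*(Ax_0)=0$. Then the functional $\Phi\in\mathcal{L}(X,Y)^*$ defined by $\Phi(S):=y_0^*(Sx_0)$ satisfies $\|\Phi\|\leq\|y_0^*\|\,\|x_0\|\leq 1$ and $\Phi(T)=y_0^*(u_0)=\|u_0\|=\|T\|$, so $\|\Phi\|=1$ and $\Phi$ norms $T$. Since also $\Phi(A)=y_0^*(Ax_0)=0$, a second application of Fact~\ref{fact:James} delivers $T\perp_B A$. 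No additional ingredient beyond Proposition~\ref{prop:suff-condition-smooth-operator} and Fact~\ref{fact:James} is required, so the whole argument amounts essentially to packaging the strong exposedness of $x_0$ and the BJ-orthogonality of $u_0$ to $Ax_0$ into a single rank-one operator.
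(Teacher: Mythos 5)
Your proposal is correct and follows essentially the same route as the paper: smoothness of $T$ is obtained by verifying the two hypotheses of Proposition~\ref{prop:suff-condition-smooth-operator} (your bookkeeping with the rotations $\theta_n$ and the strong exposedness of $x_0$ is exactly the verification the paper leaves as ``clear''). The only cosmetic difference is in the orthogonality step: the paper argues directly via $\|T+\lambda A\|\geq\|Tx_0+\lambda Ax_0\|=\|u_0+\lambda Ax_0\|\geq\|u_0\|=\|T\|$, whereas you pass through Fact~\ref{fact:James} with the rank-one functional $S\mapsto y_0^*(Sx_0)$ --- an equivalent one-line argument exploiting the same fact that $T$ attains its norm at $x_0$ with $Tx_0=u_0$.
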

	
	\begin{proof}
		Observe that $T$ clearly satisfies the hypotheses of Proposition~\ref{prop:suff-condition-smooth-operator} so it is a smooth operator. Besides, using that $u_0\perp_B Ax_0$, we have that
		$$
		\|T+\lambda A\| \geq \|Tx_0+\lambda Ax_0\|=\|u_0+\lambda Ax_0\|\geq \|u_0\|=\|T\|
		$$
		for every $\lambda\in\K$. Consequently, $T\perp_B A$. 	
	\end{proof}
	
	We obtain the desired result as an immediate consequence of the previous lemma and Corollary~\ref{cor:smooth-orth-spear-L(XY)}.
	
	\begin{corollary}\label{corollary:main-consequence-spearoperators}
		Let $X$, $Y$ be  Banach spaces and let $G\in \mathcal{L}(X,Y)$ with $\|G\|=1$. Suppose that
		there are $x_0\in \StrExp(B_X)$ and $u_0 \in \smooth(Y)$ satisfying that $u_0\perp_B Gx_0$. Then, $\bigl(\mathcal{L}(X,Y),v_G\bigr)$ is not isometrically isomorphic to $\mathcal{L}(X,Y)$. In particular, $G$ is not a spear operator. 
	\end{corollary}
	
	The previous result can be reformulated in a more suggestive manner as follows.
	
	\begin{corollary}
		Let $X$, $Y$ be  Banach spaces and let $G\in \mathcal{L}(X,Y)$ with $\|G\|=1$ be a spear operator. Then,
		$$
		\left(\bigcup_{y\in \smooth(Y)} y^\perp\right)\cap G(\StrExp(B_X))=\emptyset
		\quad \text{and} \quad
		\smooth(Y)\cap \left(\bigcup_{x\in \StrExp(B_X)} {^\perp (Gx)}\right)=\emptyset.
		$$
	\end{corollary}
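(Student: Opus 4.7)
The plan is to recognize this corollary as a straightforward reformulation (in fact, the contrapositive) of the first assertion of Corollary~\ref{corollary:main-consequence-spearoperators}, expressed in terms of the orthogonality sets $y^\perp$ and $^\perp z$ introduced at the beginning of Section~\ref{section:An-application}. No new analytical input is needed; the task is purely a translation between two equivalent formulations.

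First I would unpack the two set-theoretic statements. For the first identity, an element of the intersection $\left(\bigcup_{y\in\smooth(Y)}y^\perp\right)\cap G(\StrExp(B_X))$ is precisely a point of the form $Gx_0$ with $x_0\in\StrExp(B_X)$ such that $Gx_0\in y^\perp$ for some $y\in\smooth(Y)$, i.e., such that $y\perp_B Gx_0$. For the second identity, an element of $\smooth(Y)\cap\left(\bigcup_{x\in\StrExp(B_X)}{^\perp(Gx)}\right)$ is a smooth point $y\in Y$ for which there exists $x_0\in\StrExp(B_X)$ with $y\in{^\perp(Gx_0)}$, again meaning $y\perp_B Gx_0$. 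Thus both intersections being non-empty is equivalent to the existence of a pair $(x_0,y)\in\StrExp(B_X)\times\smooth(Y)$ with $y\perp_B Gx_0$.

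Now I would conclude by contradiction: supposing one of the two intersections is non-empty, we obtain such a pair $(x_0,y)$. Applying Corollary~\ref{corollary:main-consequence-spearoperators} with the role of $u_0$ played by $y$, we conclude that $G$ is not a spear operator, contradicting the hypothesis that $G$ is a spear operator. Hence both intersections are empty.

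There is no serious obstacle here; the only subtlety is to notice that the two displayed intersections encode the same condition on $G$, so a single invocation of Corollary~\ref{corollary:main-consequence-spearoperators} suffices to dispatch both.
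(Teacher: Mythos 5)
Your proposal is correct and is exactly the paper's intended argument: the paper presents this corollary as a direct rewriting of the first part of Corollary~\ref{corollary:main-consequence-spearoperators}, and your unpacking of the two intersections into the single condition ``there exist $x_0\in \StrExp(B_X)$ and a smooth point $u_0\in Y$ with $u_0\perp_B Gx_0$'' followed by the contrapositive of that corollary is precisely that rewriting. Nothing is missing.
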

	
	We obtain an interesting result when $X$ contains strongly exposed points and $Y$ is a smooth Banach space with dimension at least two.
	
	\begin{corollary}\label{corollary:strexpnonempty-smooth-smooth-ortogona}
		Let $X$ be a Banach space with $\StrExp(B_X)\neq \emptyset$ and let $Y$ be a smooth Banach space of dimension at least two. Then, for every $A\in\mathcal{L}(X,Y)$ there is a smooth  operator $T\in\mathcal{L}(X,Y)$ satisfying $T\perp_B A$. Consequently, there are no spear operators in $\mathcal{L}(X,Y)$.
	\end{corollary}
	
	Observe that the last assertion of this result extends \cite[Proposition~6.5.a]{KMMP-SpearsBook} when $X$ contains strongly exposed points (in particular, when $X$ has the RNP) and provides a partial answer to \cite[Problem~9.12]{KMMP-SpearsBook}.
	
	\begin{proof}[Proof of Corollary~\ref{corollary:strexpnonempty-smooth-smooth-ortogona}]
		To prove the first assertion, take $x_0\in \StrExp(B_X)$, then $Ax_0\in Y$ and, since $\dim(Y)\geq2$, there exists $u_0\neq 0$ smooth point of $Y$ such that $u_0\perp_B Ax_0$. Now, Lemma~\ref{lemma:existence-operator-smooth-orth} gives the existence of a smooth operator $T$ BJ-orthogonal to $A$. The second assertion follows from Corollary~\ref{cor:smooth-orth-spear-L(XY)}.
	\end{proof}

	\subsection{Banach spaces with numerical index one} \label{subsec:banachspacesnumericalindexone}
	We finally particularize the results of the previous subsection to the case when $X=Y$ and $G=\Id_X$. In this case, we use the usual notation $v(\cdot)$ for the numerical radius (instead of $v_{\Id}$) which was introduced in Subsection~\ref{subsection:numericalradiusnorm}. We need the following notation.
	The \emph{numerical index} of a Banach space $X$ is defined by
	$$
	n(X):=\inf\{v(T)\colon T\in S_{\mathcal{L}(X)}\}.
	$$
	Equivalently, $n(X)$ is the greatest constant $k\geq 0$ such that $k\|T\|\leq v(T)$ for every $T\in\mathcal{L}(X)$. Note that $0\leq n(X)\leq 1$ and $n(X)>0$ if and only if $v(\cdot)$ and $\|\cdot\|$ are equivalent norms on $\mathcal{L}(X)$. The case $n(X)=1$ is equivalent to the fact that $\Id_X$ is a spear operator and we say that $X$ is a \emph{Banach space with numerical index one} or that $X$ has \emph{numerical index one}. We refer the reader to the expository paper \cite{KMP} and to Chapter~1 of the already cited book \cite{KMMP-SpearsBook} for an overview of classical and recent results on Banach spaces with numerical index one. Let us mention that some isomorphic and isometric restrictions on a Banach space $X$ to have numerical index one are known: $X^*$ cannot be smooth nor strictly convex \cite[Theorem~2.1]{ConvexSmooth} and, in the \emph{real} infinite-dimensional case, $X^*$ contains a copy of $\ell_1$ \cite[Corollary~4.9]{SCDsets}. It is open, as far as we know, whether the latter result extends to the complex case and whether a Banach space with numerical index one can be smooth or strictly convex (\cite{ConvexSmooth} or \cite[Problem~9.12]{KMMP-SpearsBook}). The particularization of the results of the previous subsection to the case of the identity reads as follows.
	
	\begin{corollary}\label{cor:X-exp-point-n(X)<1}
		Let $X$ be a Banach space. If there are $x_0\in \StrExp(B_X)$ and $u_0\in \smooth(X)$ such that $u_0\perp_B x_0$, then $X$ does not have numerical index one.
	\end{corollary}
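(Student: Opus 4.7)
The plan is to apply Corollary~\ref{corollary:main-consequence-spearoperators} in the particular case $Y=X$ and $G=\Id_X$, which requires no additional work beyond unpacking the definitions. Under this choice, the hypothesis of that corollary---that there exist $x_0\in \StrExp(B_X)$ and $u_0\in \smooth(Y)=\smooth(X)$ with $u_0\perp_B Gx_0$---becomes exactly the hypothesis of the present statement, since $Gx_0=\Id_X(x_0)=x_0$.

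From Corollary~\ref{corollary:main-consequence-spearoperators} I would then conclude that $\Id_X$ is not a spear operator of $\mathcal{L}(X)$. To finish, I would invoke the characterization recalled at the beginning of Subsection~\ref{subsec:banachspacesnumericalindexone}: the equality $n(X)=1$ is equivalent to $\Id_X$ being a spear operator of $\mathcal{L}(X)$. Therefore $n(X)<1$, and $X$ does not have numerical index one.

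There is essentially no obstacle here; the statement is a direct specialization of the operator result to the identity, coupled with the standard reformulation of numerical index one in terms of $\Id_X$ being a spear operator. If one prefers a more self-contained presentation, one can alternatively cite Lemma~\ref{lemma:existence-operator-smooth-orth} to produce a smooth operator $T\in\mathcal{L}(X)$ with $T\perp_B \Id_X$ (namely $T(x)=x_0^*(x)u_0$, where $x_0^*\in S_{X^*}$ strongly exposes $x_0$), and then apply Corollary~\ref{cor:smooth-orth-spear-L(XY)} directly to $G=\Id_X$ to conclude that $\Id_X$ fails to be a spear operator, hence $n(X)\neq 1$.
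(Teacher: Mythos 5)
Your proposal is correct and matches the paper's approach exactly: the paper obtains this corollary precisely as the particularization of Corollary~\ref{corollary:main-consequence-spearoperators} to $Y=X$ and $G=\Id_X$, combined with the stated equivalence that $n(X)=1$ holds if and only if $\Id_X$ is a spear operator. Your alternative ``self-contained'' route via Lemma~\ref{lemma:existence-operator-smooth-orth} and Corollary~\ref{cor:smooth-orth-spear-L(XY)} is also exactly how the paper proves Corollary~\ref{corollary:main-consequence-spearoperators} itself, so no new ground is broken there either.
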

	
	This result can be written in the following more suggestive way:
	
	\begin{corollary}
		Let $X$ be a Banach space with numerical index one. Then,
		$$
		\left(\bigcup_{x\in \smooth(X)} x^\perp\right)\cap \StrExp(B_X)=\emptyset
		\quad \text{and} \quad
		\smooth(X)\cap \left(\bigcup_{x\in \StrExp(B_X)} {^\perp x}\right)=\emptyset.
		$$
	\end{corollary}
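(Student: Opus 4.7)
The plan is to observe that this corollary is the contrapositive reformulation of Corollary~\ref{cor:X-exp-point-n(X)<1}, and to execute that reformulation carefully because BJ-orthogonality is not symmetric, so one must check that both set-theoretic identities really do encode the same condition covered by Corollary~\ref{cor:X-exp-point-n(X)<1}.

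First I would unpack the two intersections. An element $y$ belongs to $\bigcup_{x\in \smooth(X)} x^\perp$ exactly when there exists $x\in \smooth(X)$ with $x\perp_B y$; requiring $y\in\StrExp(B_X)$ in addition produces a smooth point $x$ and a strongly exposed point $y$ with $x\perp_B y$. For the second intersection, a smooth point $u_0$ lies in $\bigcup_{x\in \StrExp(B_X)} {^\perp x}$ precisely when there is a strongly exposed $x_0$ with $u_0\perp_B x_0$. Thus both non-emptiness assertions are literally the same statement: there exist $u\in\smooth(X)$ and $x\in\StrExp(B_X)$ with $u\perp_B x$.

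Next, to prove the corollary by contrapositive, I would assume that at least one of the two intersections is non-empty. By the unpacking above, this yields $u_0\in \smooth(X)$ and $x_0\in \StrExp(B_X)$ with $u_0\perp_B x_0$. Corollary~\ref{cor:X-exp-point-n(X)<1} then immediately gives that $X$ does not have numerical index one, contradicting the hypothesis $n(X)=1$. Hence both intersections must be empty, which is the desired conclusion.

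There is no real obstacle: the only subtle point is the asymmetry of $\perp_B$, and I have handled it above by checking that both the ``$x\in \smooth(X),\; y\in \StrExp(B_X),\; x\perp_B y$'' and the ``$u\in \smooth(X),\; x\in \StrExp(B_X),\; u\perp_B x$'' configurations are the same instance of a smooth point being BJ-orthogonal to a strongly exposed point, which is precisely the hypothesis of Corollary~\ref{cor:X-exp-point-n(X)<1}. No new analytic input is required beyond that corollary.
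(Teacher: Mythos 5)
Your proposal is correct and takes essentially the same route as the paper, which presents this corollary as a mere restatement of Corollary~\ref{cor:X-exp-point-n(X)<1}: both intersections unpack to the existence of $u\in\smooth(X)$ and $x\in\StrExp(B_X)$ with $u\perp_B x$ (the smooth point always on the left of $\perp_B$, as you verified), so the statement is exactly the contrapositive of that corollary.
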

	
	The above result provides a necessary condition to have numerical index one for a Banach space in the way that was asked in \cite[Problem 11]{KMP}: {\slshape Find necessary and sufficient conditions for a Banach space to have numerical index one which do not involve operators.}
	
	The next is a consequence of Corollary~\ref{cor:X-exp-point-n(X)<1} which gives a partial answer to the question of whether there is a smooth Banach space with numerical index one.
	
	\begin{corollary}\label{corollary:smooth-stronglyexposednotempty-nonX=1}
		Let $X$ be a smooth Banach space of dimension at least two such that $\StrExp(B_X)\neq \emptyset$. Then, $X$ does not have numerical index one.
	\end{corollary}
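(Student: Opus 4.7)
The plan is to reduce directly to Corollary~\ref{cor:X-exp-point-n(X)<1}, whose conclusion is exactly what we want. To apply it, I need to produce a pair $x_0\in \StrExp(B_X)$ and $u_0\in \smooth(X)$ with $u_0\perp_B x_0$. The hypothesis supplies $x_0\in \StrExp(B_X)$ directly, and since $X$ is smooth every non-zero vector of $X$ is automatically a smooth point. Hence the problem reduces to finding some $u_0\neq 0$ with $u_0\perp_B x_0$.

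To produce such a $u_0$, I would use the dimension hypothesis: first pick any $v\in X$ linearly independent with $x_0$, which is possible because $\dim X\geq 2$, and then consider the function $\varphi\colon \K \longrightarrow \R$ defined by $\varphi(\lambda):=\|v+\lambda x_0\|$. This function is continuous, convex (viewing $\K=\C$ as $\R^2$ if necessary), and coercive since $\varphi(\lambda)\geq |\lambda|\|x_0\|-\|v\|\to \infty$ as $|\lambda|\to \infty$. Consequently $\varphi$ attains a global minimum at some $\lambda_0\in \K$, and setting $u_0:=v+\lambda_0 x_0$, the minimality of $\varphi$ at $\lambda_0$ translates exactly into $\|u_0\|\leq \|u_0+\lambda x_0\|$ for every $\lambda\in \K$, which is the definition of $u_0\perp_B x_0$. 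The linear independence of $v$ and $x_0$ forces $u_0\neq 0$, and smoothness of $X$ then gives $u_0\in \smooth(X)$, so Corollary~\ref{cor:X-exp-point-n(X)<1} closes the argument.

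There is essentially no obstacle in this plan: the only non-trivial ingredient beyond the deeper Corollary~\ref{cor:X-exp-point-n(X)<1} (itself a consequence of Theorem~\ref{theorem:smooth-not-orthogonal}) is the standard compactness/sublevel-set argument establishing the existence of a minimizer for a coercive continuous convex function on $\K$. The entire substance of the corollary is transferred through the elementary observation that in any Banach space of dimension at least two, each non-zero vector admits some non-zero BJ-orthogonal companion.
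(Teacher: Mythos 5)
Your proposal is correct and follows essentially the same route as the paper: Corollary~\ref{corollary:smooth-stronglyexposednotempty-nonX=1} is deduced from Corollary~\ref{cor:X-exp-point-n(X)<1} by taking $x_0\in\StrExp(B_X)$, noting that smoothness of $X$ makes every non-zero vector a smooth point, and producing a non-zero BJ-orthogonal companion $u_0\perp_B x_0$. The only difference is that you spell out the standard best-approximation (coercive convex minimization) argument for the existence of such a $u_0$, which the paper treats as immediate; that step is fine as written.
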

	
	This applies, in particular, when $X$ has the RNP.
	
	\begin{corollary}
		Let $X$ be a smooth Banach space of dimension at least two having the RNP. Then, $X$ does not have numerical index one.
	\end{corollary}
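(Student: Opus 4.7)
The plan is to reduce the statement immediately to Corollary~\ref{corollary:smooth-stronglyexposednotempty-nonX=1} by showing that the Radon-Nikod\'{y}m property forces $\StrExp(B_X)$ to be non-empty. Once we establish this, the smoothness hypothesis and the dimension-at-least-two hypothesis are already in place, so the conclusion is automatic.

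The key step is the classical theorem of Phelps: a closed bounded convex subset $C$ of a Banach space has the RNP if and only if every closed bounded convex subset of $C$ coincides with the closed convex hull of its strongly exposed points (see Phelps's paper on Dentability and extreme points, or \cite{Phelps-Choquet} for the standard reference). Applying this to $C = B_X$ when $X$ has the RNP yields
\[
B_X = \overline{\conv}\bigl(\StrExp(B_X)\bigr),
\]
which in particular implies $\StrExp(B_X) \neq \emptyset$ (since $B_X$ itself is non-empty and non-degenerate). There is nothing subtle to check here: the RNP is inherited by $B_X$, and we only need the existence of one strongly exposed point, not the full representation.

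With $\StrExp(B_X) \neq \emptyset$ established, the hypotheses of Corollary~\ref{corollary:smooth-stronglyexposednotempty-nonX=1} are exactly met: $X$ is smooth, has dimension at least two, and has a non-empty set of strongly exposed points in its unit ball. Hence $X$ cannot have numerical index one, which is precisely the conclusion. There is no real obstacle in this proof; the work has already been done in proving Corollary~\ref{corollary:smooth-stronglyexposednotempty-nonX=1} (which in turn rests on Corollary~\ref{cor:X-exp-point-n(X)<1} and the construction in Lemma~\ref{lemma:existence-operator-smooth-orth}), and the only external ingredient invoked is Phelps's characterization of the RNP via strongly exposed points.
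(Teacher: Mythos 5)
Your proposal is correct and is essentially the paper's own argument: the paper states this corollary as an immediate particular case of Corollary~\ref{corollary:smooth-stronglyexposednotempty-nonX=1}, the implicit point being exactly that the RNP guarantees $\StrExp(B_X)\neq\emptyset$ via the classical Phelps--Bourgain result that under the RNP every closed bounded convex set is the closed convex hull of its strongly exposed points. The only quibble is bibliographic: \cite{Phelps-Choquet} (the Choquet theory lectures) is not the standard source for that RNP characterization --- Phelps's 1974 dentability paper or Diestel--Uhl would be the appropriate citation --- but this does not affect the mathematics.
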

	
	\vspace{2ex}
	
	\noindent	\textbf{Acknowledgements.\ }
	The authors would like to thank Rafael Pay\'{a} for kindly answering several inquires on the topics of this paper. We also thank Abraham Rueda for some interesting remarks on the topics.

\end{document}